\theoremstyle{plain}
\newtheorem{theorem}{Theorem}[section]
\newtheorem{lemma}[theorem]{Lemma}
\newtheorem{corollary}[theorem]{Corollary}
\newtheorem{proposition}[theorem]{Proposition}
\theoremstyle{definition}
\newtheorem{definition}{Definition}
\newtheorem{question}{Question}
\newtheorem*{convention}{Convention}
\newcommand{\C}{\mathcal{C}}
\renewcommand{\H}{\mathcal{H}}
\newcommand{\R}{\mathbb R}
\newcommand{\Z}{\mathbb{Z}}
\newcommand{\ms}{\mathcal{S}}
\newcommand{\mT}{\mathcal{T}}
\newcommand{\gm}{\gamma}
\newcommand{\Gm}{\Gamma}
\newcommand{\bt}{\beta}
\newcommand{\dt}{\delta}
\newcommand{\Dt}{\Delta}
\newcommand{\omg}{\omega}
\renewcommand{\d}{\textup{d}}
\newcommand{\GL}{\textup{GL}}
\newcommand{\Aff}{\textup{Aff}}
\renewcommand{\i}{\mathcal{I}}
\newcommand{\J}{\mathcal{I}'}
\renewcommand{\P}{\mathbf{P}}
\newcommand{\inter}{\mathbf{int}} 
\title[Affine equivalence and saddle connection graphs ]{Affine equivalence and saddle connection graphs of ~half-translation~ surfaces}
\author{ Huiping Pan}
\address{Huiping Pan
\\Department of mathematics, Jinan University, 510632, Guangzhou, China}
\email{panhp@jnu.edu.cn}
\begin{document}

\begin{abstract}
 To every ~half-translation~ surface, we associate a saddle connection graph, which is a subgraph of the arc graph. We prove that every isomorphism between two saddle connection graphs is induced by an affine homeomorphism between the underlying ~half-translation~ surfaces. We also investigate the automorphism group of the saddle connection graph, and the corresponding quotient graph.
\end{abstract}
\maketitle
\begin{quote}
\small{
  \noindent Keywords: {~half-translation~ surfaces, affine homeomorphisms, saddle connection graphs, isomorphisms}
  \vskip 5pt
\noindent AMS MSC2010: {30F60, 30F30, 54H15 }
}
\end{quote}

\vskip 20pt
\section{Introduction}

\subsection{Arc complex}
For a compact oriented topological surface with marked points $(X,\Sigma)$, an arc $\underline{a}$ on $(X,\Sigma)$ is called \emph{properly embedded} if $\partial \underline{a}\subset\Sigma$ and the interior is disjoint from $\Sigma$. The \emph{arc complex} is a simplicial complex whose $k$-simplices  correspond to the set of $k+1$ isotopy classes of properly embedded non-trivial arcs which can be realized  pairwise disjointly outside the marked points. (In this paper, by an \textit{isotopy} we mean an isotopy relative to the marked points.) The arc complex is an important and useful tool for the study of mapping class group (\cite{Har1,Har2}). Masur and Schleimer (\cite{MS}) proved that the arc complex is $\dt$-hyperbolic (see also \cite{HH}). Later, Hesel-Przytycki-Webb (\cite{HPW}) proved that the arc complex is uniformly $7$-hyperbolic.

The mapping class group acts naturally on the arc complex. Irmak-McCarthy (\cite{IM}) proved that every injective simplicial map from the arc complex is induced by a self-homeomorphism. Based on this, they described completely the automorphism group of the arc complex.
(These results also hold  for the arc complex of non-orientable surface, \cite{Irm1,Irm2}.)

\subsection{Saddle connection graph} A ~half-translation~ surface is a pair $(X,\omg)$ where $X$ is a closed Riemann surface and $\omg$ is a meromorphic quadratic differential  on $X$ which contains no poles of order greater than one.  In parallel with the arc graph, which is the 1-skeleton of the arc complex for  a topological surface with marked points,  we can consider the saddle  connection graph for a half-translation~surface with marked points.

Let $(X,\omega;\Sigma)$ be a ~half-translation~ surface with marked points $\Sigma$ which contains all the zeros and poles of $\omg$. The meromorphic quadratic differential $\omg$ induces a singular flat metric $|\omg|$ on $X$ whose singular points are exactly the zeros and poles of $\omg$. A \emph{saddle connection} is an $|\omg|$-geodesic segment on $X\backslash\Sigma$ with endpoints in $\Sigma$. The \emph{saddle connection graph} of $(X,\omg;\Sigma)$, denoted by $\ms(X,\omg;\Sigma)$, is a graph such that the vertices are saddle connections and the edges are pairs of interiorly disjoint saddle connections.
This graph has infinite diameter (see Proposition \ref{prop:infdiam}). Moreover, it follows from an observation due to Minsky-Taylor (\cite{MT}) that the saddle connection graph is isometrically embedded into the arc graph. In particular, it is connected and $\dt$-hyperbolic (see Section 2).


\subsection{Statement of Results}
 The aim of this paper is to investigate isomorphisms between two  saddle connection graphs, and the automorphism group of the saddle connection graph. A homeomorphism between two ~half-translation~ surfaces with marked points is called \emph{affine} if it is affine outside the set of the marked points with respect to the coordinates defined by integrating one of the square roots of  the corresponding quadratic differential. Every affine homeomorphism induces an isomorphism between the corresponding saddle connection graphs. The main result of this paper shows that the converse is also true.

\begin{theorem}\label{thm:main1}
  Let $(X,\omg;\Sigma),(X',\omg';\Sigma')$ be two ~half-translation~ surfaces with marked points. Then every isomorphism $F: \ms(X,\omg;\Sigma)$ $\to\ms(X',\omg';\Sigma')$ is induced by an affine homeomorphism $f:(X,\omg;\Sigma)$ $\to(X',\omg';\Sigma')$.
\end{theorem}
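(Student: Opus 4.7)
The plan is three-stage: extend $F$ to a simplicial isomorphism between the full arc graphs of the underlying topological surfaces, invoke Irmak--McCarthy to obtain a topological homeomorphism, and then upgrade it to an affine homeomorphism via rigidity.

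\textbf{Stage 1 (from saddle connections to arcs).} Every half-translation surface admits a geodesic triangulation of $(X,\Sigma)$ whose edges are saddle connections (for instance, a Delaunay triangulation). Such a triangulation realizes the maximum number of pairwise interiorly disjoint arcs allowed by the topology of $(X,\Sigma)$, and so corresponds to a maximal clique in $\ms(X,\omg;\Sigma)$. I would first show the converse: every maximal clique is such a triangulation. Hence $F$ carries saddle-connection triangulations to saddle-connection triangulations, and since flips between triangulations are detectable combinatorially (two adjacent triangulations differ in a single vertex), $F$ also preserves flips. Because any isotopy class of simple arc on $(X,\Sigma)$ is determined by its pattern of disjointness with sufficiently many saddle-connection triangulations, one can then extend $F$ canonically to a simplicial isomorphism $\tilde F \colon \A(X,\Sigma) \to \A(X',\Sigma')$ between the arc graphs.

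\textbf{Stage 2 (topological realization).} Irmak--McCarthy's theorem \cite{IM} then produces a homeomorphism $f_0 \colon (X,\Sigma) \to (X',\Sigma')$, unique up to isotopy rel $\Sigma$, that induces $\tilde F$. In particular, for every saddle connection $\alpha$ of $(X,\omg;\Sigma)$, the image $f_0(\alpha)$ is isotopic to $F(\alpha)$.

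\textbf{Stage 3 (topological to affine).} Since distinct saddle connections lie in distinct isotopy classes, $F(\alpha)$ is the unique saddle connection in the isotopy class of $f_0(\alpha)$. Fix a saddle-connection triangulation $T$ of $(X,\omg;\Sigma)$; then $F(T)$ is also one. By an isotopy, arrange $f_0$ to be affine on each triangle of $T$, mapping it to the corresponding triangle of $F(T)$. The remaining task is to propagate affineness across each edge of $T$. This is achieved by a flip argument on convex flat quadrilaterals: if $a \in T$ has adjacent flat quadrilateral $Q_a$, and if $a'$ is its other diagonal, then $(T \setminus \{a\}) \cup \{a'\}$ is a saddle-connection triangulation, whose image $(F(T) \setminus \{F(a)\}) \cup \{F(a')\}$ is also one. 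By uniqueness of saddle connections in isotopy classes, $F(a')$ must be the straight diagonal of the image flat quadrilateral, which combined with the affine identifications on the two triangles of $T$ adjacent to $a$ forces $f_0|_{Q_a}$ to be affine. Iterating across the connected flip graph of saddle-connection triangulations propagates affineness to the whole surface.

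The main obstacle is Stage 3: transforming combinatorial preservation into geometric rigidity. One must argue that the image of a flipped diagonal is truly the geometric diagonal of the image flat quadrilateral, not merely a homotopic arc, which requires a combinatorial certificate inside $\ms$ that two adjacent triangles assemble into a convex flat quadrilateral. A related subtlety in Stage 1 is the extension of $F$ to arcs that are not saddle connections on $(X,\omg;\Sigma)$: the geodesic representative of a typical arc is a broken concatenation of saddle connections, so the extension must be carried out with care --- for instance by working with auxiliary deformations of the flat structure in which a given arc becomes straight.
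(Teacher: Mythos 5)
Your Stage 1 contains the essential gap. To extend $F$ to an isomorphism $\tilde F\colon \A(X,\Sigma)\to\A(X',\Sigma')$ you must, for an arc $\underline a$ that is not isotopic to a saddle connection, exhibit an arc on $(X',\Sigma')$ whose disjointness pattern with the triangulations $F(\Gm)$ matches that of $\underline a$ with the triangulations $\Gm$. Uniqueness of such an arc is plausible, but its \emph{existence} is not: it amounts to showing that the combinatorial gluing data of triangulations is transported consistently by $F$, which is essentially the content of the theorem itself. Your suggested remedy --- deforming the flat structure so that $\underline a$ becomes straight --- does not help, because the only deformations that preserve the saddle connection graph are the $\GL(2,\R)$-deformations, and these never straighten an arc whose geodesic representative is a broken concatenation of several saddle connections. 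The same missing ingredient reappears in Stage 3: you correctly note that one needs ``a combinatorial certificate inside $\ms$ that two adjacent triangles assemble into a convex flat quadrilateral,'' and that the image of a flipped diagonal must be the geometric diagonal of the image quadrilateral; but this is precisely the hard technical core of the problem and is left unproved. In the paper this is Theorem \ref{thm:triangle} (triangle preservation) together with Lemma \ref{lem:quadri:diagonals} and the pentagon lemmas of Sections \ref{sec:admissible:pentagon}--\ref{sec:triangle}, which occupy the bulk of the argument.

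By contrast, the paper never leaves the saddle connection graph: it proves directly that $F$ preserves triangles (via admissible pentagon extensions), glues the induced affine maps on triangles into a homeomorphism after checking orientation consistency (Theorem \ref{thm:homeo} and Proposition \ref{prop:orientation:consist}), and then upgrades this homeomorphism to an affine one using the cylinder-curve rigidity criterion of Duchin--Leininger--Rafi (Theorem \ref{thm:DLR}) rather than a flip-propagation argument. Your Stages 2 and 3 would be salvageable only after supplying the triangle- and convex-quadrilateral-preservation results; at that point the detour through the full arc graph and Irmak--McCarthy becomes unnecessary.
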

\remark A direct consequence of Theorem \ref{thm:main1} is that every saddle connection graph determines a Teichm\"uller disk. In other words, there is a one-to-one correspondence between the set of isomorphism classes of saddle connection graphs and the set of Teichm\"uller disks.
\begin{theorem}\label{thm:main2}
  Let $(X,\omg;\Sigma)$ be a ~half-translation~ surface with marked points.
  \begin{enumerate}[(i)]
    \item  If $(X,\omg;\Sigma)$ is not a torus  with one marked point, then the automorphism group of $ \ms(X,\omg;\Sigma)$ is  isomorphic to the group of affine homeomorphisms of $(X,\omg;\Sigma)$.
    \item If $(X,\omg;\Sigma)$ is a torus  with one marked point, then the  automorphism group of $ \ms(X,\omg;\Sigma)$ is an index two subgroup of  the group of affine homeomorphisms of $(X,\omg;\Sigma)$.
  \end{enumerate}

\end{theorem}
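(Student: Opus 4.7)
By Theorem~\ref{thm:main1}, every automorphism of $\ms(X,\omg;\Sg)$ is induced by an affine homeomorphism; this gives a surjective homomorphism $\Phi\colon \Aff(X,\omg;\Sg) \to \Aut(\ms(X,\omg;\Sg))$, $f\mapsto(\gm\mapsto f(\gm))$. The theorem reduces to computing $K := \ker\Phi$, which consists of affine homeomorphisms fixing every saddle connection setwise: case (i) asserts $K$ is trivial and case (ii) that $|K|=2$.

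The first task is to show that any $f\in K$ is a local isometry with linear part $\pm I$. Since saddle connection directions are dense at each $p\in\Sg$ and $f$ must preserve each one, the linear part of $f$ preserves infinitely many lines while also preserving lengths, hence it equals $\pm I$. Next, $f$ must fix $\Sg$ pointwise: if $f$ swapped two distinct marked points $p\neq q$, then every saddle connection at $p$ would have to be a cross-connection to $q$, giving infinitely many saddle connections from $p$ to $q$; $f$ swaps the endpoints of each such segment and so fixes its midpoint, producing an infinite accumulating set of fixed points of $f$ in the compact surface $X$ and forcing $f=\mathrm{id}$, a contradiction.

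With $f$ fixing $\Sg$ pointwise, its linear part is either $+I$ (locally a translation) or $-I$ (locally an involution). A local translation fixing both endpoints of a saddle connection restricts to the identity on it; since saddle connections are dense in $X$, this forces $f=\mathrm{id}$. A local involution, by contrast, would have to fix both endpoints of any cross-connection while acting by derivative $-I$ along it, which is impossible on a segment of positive length. Thus a local involution in $K$ requires $|\Sg|=1$; writing $\Sg=\{p\}$, every saddle connection is then a loop at $p$, and a direct computation shows such an $f$ fixes a loop $\gm$ setwise if and only if its leaving and returning tangent directions at $p$ coincide in the tangent cone $T_p X$. I expect the key obstacle here to be producing, at a cone point (cone angle $\neq 2\pi$), a loop with mismatched leaving and returning directions; my plan is to exploit that the $\{\pm I\}$-valued linear holonomy around loops in $X\setminus\Sg$ based at $p$ cannot be uniformly trivial unless, in view of the cone-angle constraint, the surface is a torus. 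Hence the involution case forces $p$ to be a regular point, and by the degree formula for the quadratic differential $X$ must have genus one.

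Finally, on the torus $\mathbb{C}/\Lmd$ with $\Sg=\{0\}$, I would verify that $\iota(z)=-z$ is affine, fixes $0$, and sends each saddle connection (lifting to a segment $[0,\lmd]$ with $\lmd\in\Lmd$) to $[0,-\lmd]$; the latter projects to the same loop on the torus, traversed in reverse. Hence $\iota\in K$. Any other element of $K$ of involution type would differ from $\iota$ by a local translation, which by the previous step must be the identity, so $K=\{\mathrm{id},\iota\}$ has order two, completing case (ii).
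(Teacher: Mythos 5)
Your strategy---realize $\Aut(\ms(X,\omg;\Sigma))$ as $\Aff(X,\omg;\Sigma)/K$ and compute the kernel $K$ by hand---is a genuinely different route from the paper's, which simply invokes the uniqueness clause of Theorem \ref{thm:homeo} (the inducing homeomorphism is unique up to isotopy, except on the once-marked torus where there are exactly two classes). Your first step (the derivative fixes a dense set of directions, hence is $\pm I$) and your final torus computation are fine, but there are two genuine gaps in between. First, the argument that $f\in K$ cannot swap marked points $p\neq q$ fails at ``producing an infinite accumulating set of fixed points'': distinct saddle connections from $p$ to $q$ can share midpoints, and in your situation they must, since (as your own setup shows) such an $f$ is an isometric involution whose derivative at any fixed point is $-I$, so its fixed-point set is isolated and finite. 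Concretely, on $\R^2/\Z^2$ with marked points $0$ and $w=(1/2,1/2)$, the map $f(z)=w-z$ fixes setwise every saddle connection from $0$ to $w$, yet all their midpoints lie in a set of four points. The conclusion is still true, but needs another mechanism; for instance, every triangle of a geodesic triangulation incident to $p$ would have both of its $p$-edges ending at $q$ and hence a third side that is a loop at $q$, while the same endpoint argument applied to $q$ shows no loop at $q$ can exist.

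Second, the case $|\Sigma|=1$ with genus $g\ge 2$ is only a plan, and the proposed mechanism does not work: for a translation surface in $\H(2g-2)$ every loop has trivial $\{\pm I\}$-holonomy, so the holonomy obstruction you intend to exploit is vacuous precisely in the hardest case (e.g.\ the hyperelliptic involution of a surface in $\H(2)$ is affine with derivative $-I$, fixes the zero, and reverses many saddle connections; what must be shown is that it cannot reverse \emph{all} of them). The relevant constraint is an exact angle/combinatorial one, not a mod-$2\pi$ holonomy one. A clean way to close both gaps at once in the spirit of your proof: if $f\neq\mathrm{id}$ lies in $K$, then $f$ fixes every edge of any geodesic triangulation $\Gamma$, and no triangle can be $f$-invariant (a Euclidean triangle is never invariant under a point reflection, and invariance under a translation forces $f=\mathrm{id}$); hence $f$ sends each triangle $T$ to the unique other triangle adjacent to each of its three sides, so $X=T\cup f(T)$ and $\chi(X)=1-3+2=0$, i.e.\ $X$ is the once-marked torus. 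This is essentially the content the paper packages into Theorem \ref{thm:homeo}.
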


\remark For a generic ~half-translation~ surface, the group of affine homeomorphisms is trivial. As a consequence, the corresponding saddle connection graph has no nontrivial automorphisms, which is different from the arc graph.

\begin{theorem}\label{thm:quotient}
  Let $(X,\omg;\Sigma)$ be a ~half-translation~ surface with marked points. Let $\mathcal{G}(X,\omg;\Sigma)$ be the quotient of $\ms(X,\omg;\Sigma)$ by its automorphism group.
\begin{enumerate}[(i)]
  \item If $(X,\omg;\Sigma)$ is a translation surface, then $\mathcal{G}(X,\omg;\Sigma)$ has infinitely many edges if and only if $(X,\omg;\Sigma)$ is not a torus with one marked point.
  \item If $(X,\omg;\Sigma)$ is a Veech surface, then $\mathcal{G}(X,\omg;\Sigma)$ has finitely many vertices.
\end{enumerate}
\end{theorem}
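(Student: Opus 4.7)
The plan is to use Theorem~\ref{thm:main2} to reduce both parts to orbit counts under the affine group $\Aff(X,\omg;\Sg)$: the automorphism group of $\ms(X,\omg;\Sg)$ agrees with $\Aff(X,\omg;\Sg)$ up to an index-two inclusion in the once-marked torus case, so $\mathcal{G}(X,\omg;\Sg)$ has finitely many vertices (resp.\ edges) if and only if $\Aff(X,\omg;\Sg)$ acts with finitely many orbits on saddle connections (resp.\ on pairs of interiorly disjoint saddle connections).

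For Part (i), note that any affine homeomorphism has derivative of determinant $\pm 1$, so $(\ap,\bt) \mapsto |\det(v_\ap,v_\bt)|$, where $v_\gm \in \R^2$ denotes the holonomy vector, is an $\Aff$-invariant on the edge set of $\ms(X,\omg;\Sg)$. When $(X,\omg;\Sg)$ is a torus with one marked point, saddle connections are in bijection with primitive vectors in $\Z^2$ up to sign, and two are interiorly disjoint precisely when the corresponding pair forms a $\Z$-basis of $\Z^2$; since $\SL(2,\Z)$ acts transitively on ordered bases, there is a single edge orbit. Conversely, if $(X,\omg;\Sg)$ is not a torus with one marked point, I would pick a saddle connection $\ap$ whose complement has non-trivial topology---positive genus or two boundary components---and construct within it an infinite family $\{\bt_n\}$ of saddle connections disjoint from $\ap$ whose holonomies have lengths tending to infinity in a direction bounded away from that of $v_\ap$; then the invariant $|\det(v_\ap,v_{\bt_n})|$ realizes infinitely many distinct values, forcing infinitely many edge orbits.

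For Part (ii), on a Veech surface the Veech group $\Gm$ is a lattice in $\SL(2,\R)$, and by a classical theorem of Veech (applied to the orientation double cover in the half-translation case) the set of holonomy vectors of saddle connections forms a finite union of $\Gm$-orbits. Combined with the fact that saddle connections are further classified by the unordered pair of endpoints in the finite set $\Sg$, the affine group acts with finitely many orbits on saddle connections, so $\mathcal{G}(X,\omg;\Sg)$ has finitely many vertices.

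The main obstacle will be realizing the construction in the non-torus direction of Part (i) uniformly across all translation surfaces other than the torus with one marked point. For surfaces of larger complexity, the existence of the family $\{\bt_n\}$ follows from standard growth estimates for saddle connections inside positive-area subsurfaces of $X\setminus\ap$. The borderline case of a torus with exactly two marked points is the most delicate: here one must explicitly exhibit disjoint saddle connections whose holonomies are not confined to a single affine line through the direction of $v_\ap$, which can be arranged by choosing $\ap$ with a generic direction relative to the lattice generated by $\Sg$ and taking arcs that wind many times around the torus before joining the two marked points.
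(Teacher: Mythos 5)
Your overall strategy coincides with the paper's: reduce to counting $\Aff(X,\omg;\Sigma)$-orbits, use the area form $|\int_\alpha\omg\wedge\int_\beta\omg|$ as an edge invariant for part (i), handle the once-marked torus by transitivity of $\SL(2,\Z)$ on bases, and invoke Veech/Vorobets finiteness for part (ii). Two steps, however, are asserted rather than proved, and one of them is exactly where the paper does its real work. For the non-torus direction of (i), the existence of a single $\alpha$ together with infinitely many disjoint $\beta_n$ realizing unboundedly many values of the determinant is not a ``standard growth estimate'': counting saddle connections in $X\setminus\alpha$ does not by itself control the direction or the wedge with $v_\alpha$, and you yourself flag the twice-marked torus as unresolved. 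The paper's Lemma \ref{lem:disjoint} handles this uniformly and concretely: it first produces a cylinder $C_0$ whose closure is a \emph{proper} subset of the surface (the only non-obvious case being when $X$ is the closure of one cylinder, where a simple sub-cylinder is extracted from a matching pair of boundary saddle connections, using precisely the hypothesis that $X$ is not a once-marked torus), then takes the $\beta_n$ to be interior saddle connections of $C_0$ (which wind arbitrarily many times, so their holonomies grow in the core direction) and $\alpha$ any non-parallel saddle connection outside $\overline{C_0}$. You would need to supply an argument of this kind; choosing $\alpha$ first and hoping to find the $\beta_n$ in its complement is harder to justify than choosing the cylinder first.

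For part (ii), your deduction has a logical slip: finitely many $\Gamma$-orbits of \emph{holonomy vectors} together with finiteness of $\Sigma$ does not classify saddle connections, since distinct saddle connections can share both holonomy and endpoints (leaving a cone point of angle $>2\pi$ in ``the same direction'' from different sectors). The repair is easy --- each fixed holonomy vector is realized by only finitely many saddle connections, by discreteness of the holonomy spectrum and finiteness of saddle connections of bounded length --- but it is that finiteness, not the endpoint data, that closes the argument. The paper sidesteps the issue by quoting the finiteness of $\Aff$-orbits of \emph{triangles} (Vorobets, Smillie--Weiss) and observing that every saddle connection lies on a triangle, which gives finitely many vertex orbits directly.
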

\remark We don't know whether the converse to the second statement in Theorem \ref{thm:quotient} is true or not (see Question \ref{q2}).

\vskip 5pt
\subsection{Related results}
To each Teichm\"uller disk, Smillie-Weiss (\cite{SW}) introduced the spine graph, which is a tree in the hyperbolic plane. They proved that the spine graph has compact quotient by the Veech group if and only if the Veech group is a lattice. The dual of the spine graph is a graph whose vertices are the directions of saddle connections and whose edges are pairs of directions which are the directions of the shortest saddle connections of some ~half-translation~ surface in the Teichm\"uller disk. Nguyen studied  the  graph of degenerate cylinders for translation~surfaces in genus two (\cite{Ngu1}) and the graph of periodic directions for translation~surface satisfying  the Veech dichotomy (\cite{Ngu2}). He proved that both of them are hyperbolic, and that every automorphism which comes from the mapping class group is induced by an affine self-homeomorphism. Moreover, based on the graph of periodic directions, Nguyen (\cite{Ngu2}) gave an algorithm to determine a coarse fundamental domain and a generating set for the Veech group of a Veech surface.

 Valentina Disarlo, Anja Randecker, and  Robert Tang  proved a similar result independently (\cite{DRT}) under the condition that all marked points are zeros or simple poles. The overall strategy of Disarlo-Randecker-Tang is similar to ours. But the ideas in proving the triangle preserving property are quite different. They prove the triangle preserving property via developing the ``triangle test", which is a combinatorial criterion that can detect the simplicies on the saddle connection complex that bound triangles on the underlying ~half-translation~ surfaces.

\vskip 5pt

\vskip 5pt
\subsection{Outline}
In Section 2, we  collect some basic properties of the saddle connection graph including the connectedness, hyperbolicity and infinite diameter. In Section \ref{sec:admissible:pentagon}-\ref{sec:affine}, we prove
 Theorem \ref{thm:main1}. The proof consists of three steps:
 \begin{quote}
    \begin{itemize}
  \item[Step 1.] (\S \ref{sec:admissible:pentagon}-\ref{sec:triangle}) Based on the existence of  many ``admissible pentagons" (see \S \ref{sec:admissible:pentagon}), we prove that the isomorphism $F$ preserves triangles (Theorem \ref{thm:triangle}).
  \item[Step 2.] (\S \ref{sec:homeo})  Fix a triangulation of $(X,\omg;\Sigma)$. The correspondence between triangles obtained in step 1 induces an affine map between triangles. We show that these affine maps have orientation consistency (Proposition \ref{prop:orientation:consist}), which allows us to glue these affine maps between triangles to obtain a homeomorphism from $(X,\omg;\Sigma)$ to $(X',\omg';\Sigma')$ (Theorem \ref{thm:homeo}). It then follows from the connectedness of the triangulation graph  that the isotopy class of the resulting homeomorphism is independent of the choices of triangulations (Proposition \ref{prop:consist:triangulation}).
  \item[Step 3.] (\S \ref{sec:affine}) We prove that the induced homeomorphism obtained in step 2 is isotopic to an affine homeomorphism (Proposition \ref{prop:affine}).
\end{itemize}
 \end{quote}
 Step three is a standard argument (see \cite{DLR,Ngu1,Ngu2}). The novel part of this paper is step one and step two.
In Section \ref{sec:affine}, we also prove Theorem \ref{thm:main2}. In  Section \ref{sec:quotient}, we prove Theorem \ref{thm:quotient}. In Section \ref{sec:questions}, we propose two questions.


\vskip 5pt
\subsection*{Acknowledgements} We  thank Duc-Manh Nguyen for pointing out a  mistake in the proof of Theorem \ref{thm:quotient} in an earlier version and for informing us the references \cite{BS,ILTC}. We thank Robert Tang for  comments for an earlier version. We thank Kasra Rafi for informing us the reference \cite{MT}. We also thank Lixin Liu and Weixu Su for useful discussions. We are grateful to the referees for careful reading of the manuscript, with many  corrections and  comments. This work is supported by NSFC 11901241.

\section{Preliminaries}

\subsection{Arc graph}
An arc $\underline{a}$ on $(X,\Sigma)$ is called \emph{properly embedded} if $\partial \underline{a}\subset\Sigma$ and the interior is disjoint from $\Sigma$. A properly embedded arc is said to be \textit{trivial} if it is isotopic to a  single point (i.e. the endpoint). Otherwise, it is said to be \textit{non-trivial}.
 The \emph{arc graph} of $(X,\Sigma)$, denoted by $\mathcal{A}(X,\Sigma)$, is a graph such that the vertices $\mathcal{A}^0(X,\Sigma)$ are isotopy classes of properly embedded non-trivial arcs on $(X,\Sigma)$, the edges $\mathcal{A}^1(X,\Sigma)$ are pairs of isotopy classes of properly embedded non-trivial arcs which can be realized interiorly disjoint.
(In this paper, by an \textit{isotopy} we mean an isotopy relative to the marked points.)

Let each edge in $\mathcal{A}(X,\Sigma)$ be of length one. This induces a metric $\d_{\mathcal{A}}$ on $\mathcal{A}(X,\Sigma)$. Recall that a geodesic metric space $(\mathbb{X},\d)$ is called \emph{$\dt$-hyperbolic} if for every geodesic triangle $[xy]\cup[yz]\cup[zx]$, each geodesic in $\{[xy],[yz],[zx]\}$ is contained in the $\dt$-neighbourhood of the union of the other two.

\begin{theorem}[\cite{MS,HH,HPW}]\label{thm:hyp}
  The arc graph  $(\mathcal{A}(X,\Sigma), \d_{\mathcal{A}})$ is $\dt$-hyperbolic.
\end{theorem}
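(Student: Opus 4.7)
The plan is to follow the combinatorial unicorn-path approach of Hensel--Przytycki--Webb rather than the train-track / Teichm\"uller-flow argument of Masur--Schleimer, since the former gives uniform hyperbolicity with very elementary tools and fits the style of the rest of the paper. The central construction is the following: given two vertices $\underline{a},\underline{b}\in\mA^0(X,\Sg)$ realised so as to intersect minimally, fix endpoints and, for each intersection point $p\in\underline{a}\cap\underline{b}$ together with the endpoint data, form the concatenation of the initial sub-arc of $\underline{a}$ (from a chosen endpoint of $\underline{a}$ to $p$) with the terminal sub-arc of $\underline{b}$ (from $p$ to a chosen endpoint of $\underline{b}$). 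After a small isotopy this is a properly embedded essential arc, the \emph{unicorn arc} $P(\underline{a},\underline{b};p)$. Ordering the $p$'s by their position along $\underline{a}$ produces a finite sequence of unicorn arcs, the \emph{unicorn path} from $\underline{a}$ to $\underline{b}$.

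First I would verify the two ``geometric'' lemmas that drive the whole argument. Lemma one: consecutive unicorn arcs along a unicorn path have disjoint realisations, so the unicorn path is an edge path in $\mA(X,\Sg)$ and in particular $\d_\mA(\underline{a},\underline{b})$ is bounded linearly by its length. Lemma two (stability under perturbation): if $\underline{a}'$ is disjoint from $\underline{a}$ and $\underline{b}'$ is disjoint from $\underline{b}$, then every unicorn arc of $P(\underline{a}',\underline{b}')$ is within bounded distance in $\mA(X,\Sg)$ of some unicorn arc of $P(\underline{a},\underline{b})$. Both are checked by a picture-level surgery argument: a transverse intersection between two unicorn arcs forces a bigon, which contradicts minimal position of the defining pair.

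With these two lemmas in hand, hyperbolicity is deduced by the standard slim-triangle criterion. Given a triangle $\underline{a},\underline{b},\underline{c}$, put each pair in minimal position simultaneously and form the three unicorn paths. By Lemma two each unicorn arc on the side $P(\underline{a},\underline{b})$ is within a uniformly bounded distance of either $P(\underline{a},\underline{c})$ or $P(\underline{c},\underline{b})$, by splitting at the first intersection point with $\underline{c}$ and comparing the resulting half-arcs to unicorn arcs on the other two sides. This gives a uniform slimness constant for unicorn triangles, and a Manning--Bowditch-style argument (unicorn paths are quasigeodesics by Lemma one applied inductively) promotes slim unicorn triangles to slim geodesic triangles.

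The main obstacle is getting the stability Lemma two with a uniform constant independent of topology: the na\"ive bound on how far apart $P(\underline{a},\underline{b};p)$ and $P(\underline{a}',\underline{b}';p')$ can be in $\mA(X,\Sg)$ depends on the combinatorics of $\underline{a}\cup\underline{b}\cup\underline{a}'\cup\underline{b}'$, and one must control this purely through the disjointness hypothesis. The resolution, following \cite{HPW}, is to isolate the minimal case where $\underline{a}'\cap\underline{a}=\emptyset=\underline{b}'\cap\underline{b}$ and prove a small, explicit bound (in fact, $\le 1$) in that case, from which the general statement follows by iterating along edges of the arc graph. Once this uniform constant is secured, the slim-triangle conclusion follows mechanically and yields $\dt$-hyperbolicity of $(\mA(X,\Sg),\d_\mA)$ with a universal~$\dt$.
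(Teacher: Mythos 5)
This theorem is not proved in the paper; it is quoted from the literature (\cite{MS,HH,HPW}), with the remark singling out the Hensel--Przytycki--Webb uniform $7$-hyperbolicity result. Your sketch is a faithful outline of exactly that unicorn-path argument (disjointness of consecutive unicorn arcs, $1$-slimness of unicorn triangles via splitting at the first intersection with the third arc, and the guessing-geodesics criterion), so it matches the source the paper relies on.
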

\remark In fact, Hesel-Przytycki-Webb (\cite{HPW}) show that $(\mathcal{A}(X,\Sigma),\d_{\mathcal{A}})$ is 7-hyperbolic.

\subsection{~Half-translation~ surfaces}

\begin{definition}[half-translation~ surface]
   A \emph{~half-translation~ surface} is a pair $(X,\omg)$ where $X$ is a closed  Riemann surface  and $\omg$ is a meromorphic quadratic differential on $X$ which contains no poles of order greater than one.
    An \emph{~half-translation~ surface with marked points} is a triple $(X,\omg;\Sigma)$ such that $(X,\omg)$ is a half-translation~surface, and  $\Sigma$ is a finite subset of $X$ which contains the zeros and  poles of $\omg$.
\end{definition}

The meromorphic quadratic  differential $\omg$ induces a singular flat metric $|\omg|$ on $X$, which is flat outside the zeroes and poles of $\omg$. The zeros and poles of $\omg$ are called \textit{singular points}. The cone angle at each singular point of $\omg$ is a multiple of $\pi$.

\begin{definition}[Saddle connection]
  A \emph{saddle connection} is an $|\omg|$-geodesic segment on $X\backslash\Sigma$ with endpoints in $\Sigma$.
\end{definition}

For an oriented  saddle connection ${\alpha}$, the integral $\int_{{\alpha}}\sqrt{\omg}$ is called the \emph{holonomy} of ${\alpha}$, where $\sqrt{\omg}$ is one of the square roots of $\omg$. Since there is no canonical choice of square roots of $\omg$, we see that the holonomies of (oriented)  saddle connections belong to $\mathbb{R}^2/\{\pm1\}$.
\begin{proposition}
  Let $(X,\omg;\Sigma)$ be a ~half-translation~ surface with marked points.
  \begin{itemize}
    \item   The set of holonomies of saddle connections on $(X,\omg;\Sigma)$ is a discrete subset of $\R^2/\{\pm1\}$.
    \item  The set of directions of saddle connections on $(X,\omg;\Sigma)$ is a dense subset of $
        \mathbb{S}^1/\{\pm1\}$.
  \end{itemize}
\end{proposition}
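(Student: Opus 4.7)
The plan is to treat the two bullets by independent classical arguments based on the finite area of $(X,|\omg|)$.

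For discreteness, I would prove the equivalent finiteness statement that for every $R>0$ there are only finitely many saddle connections of length at most $R$. Argue by contradiction: suppose $\{\ap_n\}$ is an infinite family of distinct saddle connections with $|\text{hol}(\ap_n)|\le R$. Since $\Sg$ is finite, after extracting we may assume all $\ap_n$ start at a common $p\in\Sg$ and end at a common $q\in\Sg$. Because a flat geodesic is uniquely determined by its initial outgoing direction until it meets the next singularity, the outgoing directions $\tta_n$ in the finite-angle tangent cone at $p$ are pairwise distinct, and after a further extraction they converge to some $\tta_\infty$, while the lengths $L_n\to L_\infty\le R$. Choose $\rho>0$ small enough that the closed $\rho$-ball in the $|\omg|$-metric around every point of $\Sg$ is an embedded flat cone with cones around distinct singularities disjoint, and pick $\delta>0$ so small that the Euclidean sector of radius $L_\infty+\rho$ and angular width $\delta$ based at $p$ embeds isometrically into $X$ away from $\Sg\setminus\{p\}$. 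For $n$ large, $\ap_n$ is contained in this sector, and its endpoint is one of the finitely many copies of singularities in the embedded sector, forcing all but finitely many $\ap_n$ to have the same endpoint position, hence to coincide---a contradiction.

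For density, fix $\tta_0\in\mathbb{S}^1/\{\pm1\}$ and a small open arc $U\ni\tta_0$, and pick $p\in\Sg$. If no $\tta\in U$ gives a saddle connection from $p$, then the geodesic-flow map $F:U\times(0,\infty)\to X\setminus\Sg$ sending $(\tta,t)$ to the point at time $t$ on the ray from $p$ in direction $\tta$ is everywhere defined, and the pullback of the flat area form is $t\,\d t\,\d\tta$; hence the total area (counted with multiplicity) covered by $F$ on $U\times(0,T)$ is $|U|T^2/2$, which exceeds the finite area of $(X,|\omg|)$ for $T$ large. So $F$ cannot be injective, and at the infimum collision time $T^*$ one has $\tta_1,\tta_2\in U$ and $t_1,t_2>0$ with $F(\tta_1,t_1)=F(\tta_2,t_2)$ and $\max(t_1,t_2)=T^*$. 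If $\tta_1=\tta_2$ then the corresponding ray is self-returning, so its direction admits a cylinder of closed trajectories bounded by saddle connections in the same direction, yielding a saddle connection with direction in $U$. If $\tta_1\ne\tta_2$, the two segments cobound an immersed flat disk in $X$; a flat disk in $X$ whose interior contains no point of $\Sg$ would develop to an embedded flat bigon in $\R^2$ bounded by two straight segments with common endpoints, which is impossible. Hence the disk contains some $q'\in\Sg$, and the straight line from $p$ to $q'$ in the developed bigon (truncated at the first singular point it meets) gives a saddle connection in the angular range spanned by $\tta_1,\tta_2$, hence in $U$.

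The main obstacle I expect is making the bigon argument fully rigorous in the second bullet: one must verify that the minimal-time collision really yields an immersed disk with embedded interior, so that the developing map realizes it as a genuine flat bigon amenable to the no-embedded-flat-bigon conclusion, and one must rule out degenerate configurations such as tangential self-return or identification of the two boundary rays via the $\{\pm 1\}$-holonomy. The discreteness argument is conceptually cleaner but also rests on a local packing estimate near each singularity that is standard in flat-surface geometry.
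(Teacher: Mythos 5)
First, note that the paper does not prove this proposition at all: it cites \cite{HS} for the first bullet and \cite{Mas} for the second, so you are attempting arguments the paper outsources to the literature. Your discreteness argument contains a step that is both false as stated and circular. You ask for a sector of radius $L_\infty+\rho$ and small angular width based at $p$ that ``embeds isometrically into $X$ away from $\Sg\setminus\{p\}$.'' No such sector exists: the point of the sector at developed position $L_ne^{i\tta_n}\neq 0$ must map to the singular endpoint of $\ap_n$, so the sector map is neither injective nor singularity-free (and, once density of directions is known, singular points occur in every sector of positive width and sufficient radius). Your fallback, that the sector contains only ``finitely many copies of singularities,'' is exactly the local finiteness you are trying to prove: the singular points first met by the developing map of the sector are in bijection with the saddle connections from $p$ of length at most $L_\infty+\rho$ with direction in the sector. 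Some genuinely new input is needed here, e.g.\ an induction on length (when the limiting ray hits a singularity $q'$ at time $T\le L_\infty$, replace $\ap_n$ for large $n$ by a strictly shorter saddle connection ending at $q'$) or a Delaunay-triangulation intersection bound.

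Your density argument is the standard finite-area sector argument and is right in outline, but the gap is the one you flag, and it is real. If $\tta_1\neq\tta_2$, the two colliding segments do \emph{not} cobound a bigon: the loop $p\to x\to p$ they form develops to a non-closed path in $\R^2$ (its endpoints are $t_1e^{i\tta_1}$ and $t_2e^{i\tta_2}$), so the correct object is the image of the Euclidean triangle with vertices $0$, $t_1e^{i\tta_1}$, $t_2e^{i\tta_2}$ under the sector map: an immersed flat triangle with two vertices identified to the regular point $x$, whose third side is a geodesic loop at $x$. Singularity-free configurations of this kind do exist (e.g.\ inside a cylinder), so ``there are no embedded flat bigons in $\R^2$'' does not finish the proof; one must compare the triangle with its image under the (half-)translation identifying the two copies of $x$ and show that either a vertex of one copy lies in the interior of the other (forcing a singularity, hence a saddle connection with direction in $U$) or the copies stack into a cylinder in a direction in $U$ --- precisely the overlap analysis the paper performs in its admissible-extension lemmas. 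The tangential case $\tta_1=\tta_2$ does work, but only because on a half-translation surface two rays in the same direction meet with equal or opposite tangent vectors (opposite being excluded since the fold's midpoint would be a pole); you should say this explicitly and also treat collisions between distinct lifts of the same direction in the tangent cone at $p$, which yield saddle connections from $p$ rather than cylinders.
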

\remark The first statement can be found in \cite[Proposition 1]{HS}. The second statement is \cite[Theorem 2]{Mas}.

\begin{definition}[Cylinder]\label{def:semi-cylinder}
   A \emph{cylinder} on $(X,\omg;\Sigma)$ is an open subset disjoint from $\Sigma$,  which is isometric to $(\R/c\Z)\times (0,h)$ with $c,h\in\R_{>0}$ and not properly contained in any other subset with the same property. A boundary component of a cylinder is called \textit{simple} if it consists of only one saddle connection.
   A cylinder is called \textit{semisimple} if it contains at least one simple boundary component.
   A cylinder  is called
   \textit{simple} if both of its boundary components are simple.
\end{definition}

  By a simple closed curve on $(X,\omg;\Sigma)$, we  mean   a simple closed curve on $X\backslash \Sigma$.
\begin{definition}[Cylinder curve]
  A simple closed curve on $(X,\omg;\Sigma)$ is called a \textit{cylinder curve}  if it is isotopic to a core curve of some cylinder on $(X,\omg;\Sigma)$.
\end{definition}

 For our purpose, we need the following theorem which is stated as Lemma 22 in \cite{DLR}.
\begin{theorem}[\cite{DLR}]\label{thm:DLR}
  Let $f:(X,\omg;\Sigma)\to (X',\omg';\Sigma')$ be a homeomorphism preserving marked points, such that for each simple closed curve $\alpha$, $\alpha$ is a cylinder curve on $(X,\omg;\Sigma)$ if and only if $f(\alpha)$ is a cylinder curve on $(X',\omg';\Sigma')$. Then $f$ is isotopic to an affine homeomorphism.
\end{theorem}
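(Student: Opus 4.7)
The strategy is to use the cylinder-preserving hypothesis on $f$ to construct, up to isotopy, an explicit affine homeomorphism $g: (X,\omg;\Sg) \to (X',\omg';\Sg')$ in the same isotopy class as $f$.

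First I would show that $f$ carries cylinder decompositions to cylinder decompositions. A cylinder decomposition of $(X,\omg)$ in some direction is characterized topologically as a maximal family $\{\ap_1,\ldots,\ap_k\}$ of pairwise disjoint, pairwise non-isotopic cylinder curves whose complement in $X\setminus \Sg$ is a disjoint union of topological annuli. All of these features are preserved by the homeomorphism $f$, so $\{f(\ap_i)\}$ is again a maximal such family on $(X',\omg';\Sg')$; since each annular component of the complement contains a cylinder by hypothesis, the maximality of cylinders forces them to piece together into a cylinder decomposition of $(X',\omg';\Sg')$ in some direction.

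Next I would pick two transverse directions on $(X,\omg)$ admitting complete cylinder decompositions; this is possible because periodic directions are dense in $\mathbb{S}^1/\{\pm 1\}$. Comparing the circumferences and heights of the cylinders in these decompositions with those of their images under $f$ produces a well-defined $A\in \GL(2,\R)$, up to sign. I would then take $g$ to be the affine homeomorphism with derivative $A$ sending one chosen marked point to its image under $f$.

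To establish $f\simeq g$ rel $\Sg$, consider $h := g^{-1}\circ f$. By construction $h$ preserves cylinder curves and fixes each cylinder direction and length. Any two distinct isotopy classes of parallel cylinder curves are distinguished by their intersection numbers with some transverse cylinder curve, and these intersections are topological invariants preserved by $h$, so $h$ fixes every cylinder curve isotopy class. A standard density argument using density of periodic directions then shows $h$ fixes every simple closed curve up to isotopy, and Ivanov-type rigidity yields $h \simeq \textup{id}$ rel $\Sg$, with the usual caveat that on the torus with one marked point one may need to quotient by the hyperelliptic involution. The main obstacle I expect is the first step: guaranteeing that $\{f(\ap_i)\}$ assembles into a genuine cylinder decomposition rather than just a topologically adequate family whose witnessing cylinders might overlap or extend across multiple annular components of the complement. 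Combining the maximality of cylinders with the topology of the complement carefully is where most of the technical work will lie.
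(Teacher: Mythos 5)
The paper offers no proof of this statement at all: it is quoted verbatim as Lemma 22 of Duchin--Leininger--Rafi \cite{DLR}, so your attempt can only be judged on its own terms, and as it stands it has genuine gaps. The decisive one is the construction of $A$. You propose to pick two transverse directions ``admitting complete cylinder decompositions'' and justify their existence by density of periodic directions. Masur's density theorem (the one this paper invokes) gives a dense set of directions containing \emph{at least one} cylinder; it says nothing about directions in which the whole surface decomposes into cylinders. Completely periodic directions form at most a countable set, complete periodicity is a strong property (studied by Calta and by Smillie--Weiss), and there is no general result guaranteeing that an arbitrary half-translation surface has even one such direction, let alone two transverse ones. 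Moreover, even where two transverse complete decompositions do exist, the sentence ``comparing the circumferences and heights \ldots produces a well-defined $A\in\GL(2,\R)$'' assumes exactly what must be proved: for a single linear map to carry the horizontal decomposition onto its image, all the ratios of image circumference to circumference over the horizontal cylinders must coincide, and the hypothesis that $f$ preserves the \emph{set} of cylinder curves does not hand you that. Extracting the linear part from this purely combinatorial hypothesis is the heart of the Duchin--Leininger--Rafi argument (they obtain it from a rigidity statement for the induced correspondence on the circle of cylinder directions), not a bookkeeping step.

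A secondary but real problem is your topological characterization of cylinder decompositions in Step 1. The complement in $X\setminus\Sigma$ of the core curves of a complete cylinder decomposition is not a disjoint union of annuli: each complementary component is a punctured regular neighborhood of a component of the critical graph of the foliation and can have arbitrary topology. Already for the once-marked torus the complement of the single core curve is a once-punctured open annulus, i.e.\ a three-holed sphere. In addition, a maximal family of pairwise disjoint, pairwise non-isotopic cylinder curves need not consist of parallel curves, since disjoint cylinders may lie in different directions; so maximality does not recognize a decomposition. Hence the claim that $\{f(\alpha_i)\}$ assembles into a genuine cylinder decomposition---which you correctly flag as delicate---is not established. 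The final step (Alexander-method rigidity once $h$ fixes every isotopy class of simple closed curve) is fine in outline, but it sits downstream of the unproved parts.
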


\subsection{Saddle connection graph}
\begin{definition}
  The \emph{saddle connection graph} of $(X,\omg;\Sigma)$, denoted by $\ms(X,\omg;\Sigma)$, is a graph such that the vertices $\ms^0(X,\omg;\Sigma)$ are saddle connections and the edges $\ms^1(X,\omg;\Sigma)$ are pairs of disjoint saddle connections.
\end{definition}

\begin{convention}
  In this paper, whenever we mention the intersection between two saddle connections, we mean the intersection in the interior.
\end{convention}

Notice that the saddle connection graph $\ms(X,\omg;\Sigma)$ is a subgraph of $\mathcal{A}(X,\Sigma)$. For any topological arc $\underline{a}\in \mathcal{A}^0(X,\omg)$, the $|\omg|$-geodesic representative consists of several saddle connections. It follows from \cite[Theorem 1.4]{MT} that $\ms(X,\omg;\Sigma)$ is a  geodesically connected subset of $\mathcal{A}(X,\Sigma)$, in the sense that any two points in $\ms(X,\omg;\Sigma)$ are connected by a geodesic of $\mathcal{A}(X,\Sigma)$ that lies in $\ms(X,\omg;\Sigma)$. This implies that $\ms(X,\omg;\Sigma)$ is \textit{connected} and \textit{$\delta$-hyperbolic.}

\begin{definition}[Triangulation]
  A triangulation $\Gm=\{\gm_i\}$ of $(X,\omg;\Sigma)$ is a set of saddle connections $\{\gm_1,\gm_2,\cdots,\gm_{6g-6+3p}\}$, where $p=|\Sigma|$ and $g$ is the genus of $X$, such that
  \begin{itemize}
    \item for any $i\neq j$, $\gm_i$ and $\gm_j$ are disjoint;
    \item any saddle connection $\gm\notin\Gm$ intersects $\gm_i$  for some $\gm_i\in\Gm$.
  \end{itemize}
\end{definition}
\begin{lemma}\label{lem:triangulation:correspondence}
  Let $F:\ms(X,\omg;\Sigma)\to \ms(X',\omg';\Sigma')$ be an isomorphism between two saddle connection graphs. Let  $\Gm:=\{\gm_i\}$ be a triangulation  of $(X,\omg;\Sigma)$.
        Then $F(\Gm):=\{F(\gm):\gm\in\Gm\}$ is a triangulation of $(X',\omg';\Sigma)$.
\end{lemma}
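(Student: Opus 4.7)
The plan is to verify the two defining properties of a triangulation for $F(\Gm)$, using only that $F$ is a graph isomorphism and the Convention that adjacency in $\ms$ corresponds to interior disjointness of saddle connections (so that non-adjacency is precisely intersection).

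First I would check pairwise disjointness. For any two distinct elements $\gm_i, \gm_j \in \Gm$, they are disjoint by definition of a triangulation, hence joined by an edge in $\ms(X,\omg;\Sigma)$. Since $F$ is an isomorphism of the saddle connection graphs, $F(\gm_i)$ and $F(\gm_j)$ are joined by an edge in $\ms(X',\omg';\Sigma')$, so they are disjoint.

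Next I would check the filling condition. Let $\gm'$ be a saddle connection on $(X',\omg';\Sigma')$ with $\gm' \notin F(\Gm)$. Because $F$ is a bijection on vertex sets, $F^{-1}(\gm') \notin \Gm$, and by the triangulation property of $\Gm$ there exists $\gm_i \in \Gm$ with $F^{-1}(\gm')$ intersecting $\gm_i$; equivalently, $F^{-1}(\gm')$ and $\gm_i$ are non-adjacent in $\ms(X,\omg;\Sigma)$. Transporting this non-adjacency via $F$, the pair $\gm' = F(F^{-1}(\gm'))$ and $F(\gm_i)$ is non-adjacent in $\ms(X',\omg';\Sigma')$, so $\gm'$ intersects $F(\gm_i) \in F(\Gm)$, as required.

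The only residual subtlety is the cardinality condition embedded in the definition, namely that a triangulation of the target consists of exactly $6g' - 6 + 3p'$ saddle connections. Since $F$ is a bijection, $|F(\Gm)| = |\Gm| = 6g - 6 + 3p$, and the two properties above promote $F(\Gm)$ to the $1$-skeleton of a topological triangulation of $(X', \Sigma')$; a standard Euler characteristic count then forces $6g - 6 + 3p = 6g' - 6 + 3p'$, reconciling the two sizes. I do not expect any genuine obstacle — the lemma is essentially an unpacking of what it means for $F$ to be a graph isomorphism, together with the convention that identifies edges of $\ms$ with interior disjointness.
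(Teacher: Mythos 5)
Your argument is correct and is essentially the paper's proof: pairwise disjointness of $F(\Gm)$ follows from $F$ preserving edges, and maximality follows by transporting a putative saddle connection disjoint from all of $F(\Gm)$ back through $F^{-1}$ to contradict the maximality of $\Gm$. Your closing remark on the cardinality $6g'-6+3p'$ addresses a point the paper's proof silently glosses over (its definition of triangulation fixes the number of saddle connections), and your Euler-characteristic resolution of it is the right one.
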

\begin{proof}
 By definition, we see that for any pair of different saddle connections $\gm_i,\gm_j\in\Gm$, $F(\gm_i)$ and $F(\gm_j)$ are disjoint. If $F(\Gm)$ is not a triangulation of $(X',\omg';\Sigma')$, then there exists a saddle connection $\gm' \notin F(\Gm)$ of $(X',\omg';\Sigma')$ which is disjoint from  $F(\gm_i)$ for all $\gm_i\in F(\Gm)$. Correspondingly, $F^{-1}(\gm')\notin \Gm$ is a saddle connection  of $(X,\omg;\Sigma)$ which is disjoint from all $\gm_i\in \Gm$. This is a contradiction which proves the lemma!

\end{proof}


\subsection{$\GL(2,\R)$ action}
Let $\kappa=(k_1,\cdots,k_n)\in\{-1,0,1,\cdots,4g-4\}^n$ be a partition of $4g-4$, i.e. $k_1+k_2\cdots +k_n=4g-4$. Denote by  $\mathcal{Q}(\kappa)$  the moduli space of meromorphic quadratic differentials with marked points $(X,\omg;\Sigma)$ which contain $n$ zeros of multiplicities $(k_1,\cdots k_n)$, where a zero of multiplicity $-1$ is a pole of order one and a zero of multiplicity zero is a regular marked point. There is a natural $\GL(2,\R)$ action on $\mathcal{Q}(\kappa)$, which acts on $(X,\omega;\Sigma)\in \mathcal{Q}(\kappa)$ by post-composition with the atlas maps of $(X,\omega;\Sigma)$ (see \cite[\S 1.8]{MT}). More precisely, let $\{(U_i,\phi_i),i\in I\}$ be an atlas of $(X,\omega;\Sigma)$, covering $X$ except the marked points of $\omega$. Then for $a\in\GL(2,\R)$, $a\cdot(X,\omega;\Sigma)$  is defined by the atlas  $\{(U_i,a\circ\phi_i),i\in I\}$. Therefore, the identity map between $(X,\omg;\Sigma)$ and $a\cdot (X,\omg;\Sigma)$ is isotopic to an affine homeomorphism, which induces an isomorphism  between $\ms(X,\omg;\Sigma)$ and $\ms(a\cdot (X,\omg;\Sigma))$.

For each $\theta\in \mathbb{S}^1/\{\pm1\}$, the straight lines in that direction induce a directional flow on $(X,\omg;\Sigma)$. By  \cite[Theorem 1]{KMS}, for  almost every $\theta\in\mathbb{S}^1/\{\pm1\}$, the induced directional flow is \textit{uniquely ergodic} on $(X,\omg;\Sigma)$.

\subsection{Infinite diameter.}
\begin{proposition}\label{prop:infdiam}
  Let $(X,\omg;\Sigma)$ be a ~half-translation~ surface with marked points. The saddle connection graph $\ms(X,\omg;\Sigma)$ has infinite diameter.
\end{proposition}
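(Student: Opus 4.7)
The plan is to exploit the isometric embedding $\ms(X,\omg;\Sigma)\hookrightarrow \mathcal{A}(X,\Sigma)$ noted just above the proposition, together with the classical Hempel-type logarithmic lower bound
\[
\d_{\mathcal{A}}(\ap,\bt)\;\geq\;\log_2 i(\ap,\bt)+1
\]
for the arc graph (proved by induction on geometric intersection number). Consequently, to conclude that $\ms(X,\omg;\Sigma)$ has infinite diameter it suffices to exhibit, for some fixed saddle connection $\ap_0$, a sequence of saddle connections $\sg_n$ for which the geometric intersection numbers $i(\sg_n,\ap_0)$ tend to infinity: the isometric embedding then gives $\d_{\ms}(\ap_0,\sg_n)=\d_{\mathcal{A}}(\ap_0,\sg_n)\to\infty$.

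To produce such $\sg_n$, I would first fix a cylinder $C$ on $(X,\omg;\Sigma)$ whose core curve $\gm$ has positive geometric intersection with some saddle connection $\ap_0$; such a pair exists on every ~half-translation~ surface, since cylinders occur in a dense set of periodic directions on $(X,\omg;\Sigma)$ and in any transverse direction one finds a saddle connection crossing $C$. Let $T_{\gm}$ denote the topological Dehn twist about $\gm$ on $(X,\Sigma)$. The standard Dehn-twist inequality for geometric intersection numbers yields
\[
i\bigl(T_{\gm}^n(\ap_0),\ap_0\bigr)\;\geq\;n\,i(\gm,\ap_0)^2-2\,i(\ap_0,\ap_0)\;\longrightarrow\;\infty,
\]
so the logarithmic estimate gives $\d_{\mathcal{A}}\bigl(T_{\gm}^n(\ap_0),\ap_0\bigr)\to\infty$.

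It remains to descend from the topological arc class $T_{\gm}^n(\ap_0)$ to an honest saddle connection. Its $|\omg|$-geodesic representative decomposes as a concatenation of saddle connections $\sg_n^{1},\dots,\sg_n^{k_n}$. Any single piece $\sg_n:=\sg_n^{1}$ is adjacent to $T_{\gm}^n(\ap_0)$ in $\mathcal{A}(X,\Sigma)$: pushing the remainder of the geodesic representative slightly off $\sg_n$ in a tubular neighbourhood produces interiorly disjoint realizations. The triangle inequality then gives
\[
\d_{\ms}(\ap_0,\sg_n)\;\geq\;\d_{\mathcal{A}}\bigl(\ap_0,T_{\gm}^n(\ap_0)\bigr)-1\;\longrightarrow\;\infty,
\]
which is the desired conclusion.

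The most delicate step should be ensuring the existence of the cylinder-plus-transverse-saddle-connection pair $(\gm,\ap_0)$ on every ~half-translation~ surface; for the exceptional case of a torus with a single marked point one can bypass the general argument and take the concrete lattice segments $\ap_0=(1,0)$ and $\sg_n=(1,n)$, whose geometric intersection is $n-1$. A secondary subtlety is the verification that a saddle-connection piece of a geodesic representative really has $\mathcal{A}$-distance at most one from the whole arc; this is a routine ``push the rest aside'' argument but does require choosing the tubular neighbourhood carefully near the shared singular endpoints.
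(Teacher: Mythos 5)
Your argument has a fatal gap at its very first step: the inequality $\d_{\mathcal{A}}(\ap,\bt)\geq\log_2 i(\ap,\bt)+1$ is false — the Hempel/Masur--Minsky surgery bound goes the \emph{other} way, namely $\d_{\mathcal{A}}(\ap,\bt)\leq 2\log_2 i(\ap,\bt)+2$. Large intersection number does not force large distance: two arcs that are both disjoint from a common third arc are at distance $2$ no matter how many times they intersect each other. This already fails in the smallest case you single out, the once-marked torus, where the arc graph is the Farey graph: the arcs of slope $0/1$ and $n/1$ have intersection number $n$ but are both adjacent to the arc of slope $1/0$, hence at distance $2$ for every $n$. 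If a lower bound of your form held, infinite diameter of arc and curve graphs would be a triviality, whereas it is precisely the hard point of this proposition.

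Your Dehn-twist construction therefore cannot work even in principle: if $\dt$ is any arc disjoint from the core curve $\gm$, then $T_\gm^n$ fixes $\dt$, so
\[
\d_{\mathcal{A}}\bigl(T_\gm^n(\ap_0),\ap_0\bigr)\leq \d_{\mathcal{A}}\bigl(T_\gm^n(\ap_0),\dt\bigr)+\d_{\mathcal{A}}(\dt,\ap_0)=2\,\d_{\mathcal{A}}(\ap_0,\dt),
\]
which is bounded independently of $n$; the twist orbit has \emph{bounded} diameter in $\mathcal{A}(X,\Sigma)$, and a fortiori the saddle connections you extract from it stay in a bounded set. The surrounding scaffolding (the isometric embedding via Minsky--Taylor, and the observation that a saddle-connection piece of the geodesic representative is at distance $\leq 1$ from the arc) is fine, but it is carrying no weight. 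To get points far apart one needs a genuinely different mechanism; the paper uses Luo's argument: choose a uniquely ergodic horizontal direction (Kerckhoff--Masur--Smillie), note that saddle connections whose directions tend to horizontal must eventually cross every fixed non-horizontal segment, and derive a contradiction from a finite-diameter assumption by propagating this along geodesics in $\ms(X,\omg;\Sigma)$.
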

\begin{proof}
  We use an argument of F. Luo as explained in \cite[\S4.3]{MM}.
  Suppose to the contrary that the diameter of $\ms(X,\omg;\Sigma)$ is finite.

   By  \cite[Theorem 1]{KMS}, for  almost every $\theta\in\mathbb{S}^1/\{\pm1\}$, the induced directional flow is \text{uniquely ergodic} on $(X,\omg;\Sigma)$.
  After rotation, we may assume that  the horizontal flow of $(X,\omg;\Sigma)$ is uniquely ergodic. Fix a non-horizontal saddle connection $\alpha$. Let $\{\bt_i\}_{i\geq1}$ be a sequence of saddle connections, such that the limit of the directions is horizontal. This implies that for every non-horizontal segment $I$, there exists $T>0$, such that $\bt_i$  intersects $I$  for all $i>T$.

  By assumption, the diameter of $\ms(X,\omg;\Sigma)$ is finite, there is a subsequence, still denoted by $\{\bt_i\}_{i\geq1}$ for convenience, such that
  $\d_\ms(\alpha,\bt_i)=N$ for some $N<\infty$. For each $i$, consider a geodesic $\bt^0_i=\alpha, \bt^1_i,\cdots, \bt^{N-1}_i,\bt^N_i=\bt_i$ in the saddle connection graph from $\alpha$ to $\bt_i$.
  On the other hand,
  for every non-horizontal segment $I$, $\bt_i$  intersects $I$ for all large enough $i$. Therefore, the limit of the directions of $\bt_i^{N-1}$ has to be horizontal. Otherwise, $\bt_i=\bt^N_i$ would intersect $\bt_i^{N-1}$ for large enough $i$. Inductively, we obtain that  the limit of the directions of $\bt_i^{1}$ has to be horizontal. As a consequence, $\bt^1_i$ would intersect the non-horizontal saddle connection $\alpha=\bt^0_i$ for large enough $i$, which contradicts that $\d_\ms(\alpha,\bt^1_i)=1$.

\end{proof}


\section{Admissible polygons}\label{sec:admissible:pentagon}

Let $(X,\omg;\Sigma)$ be a half-translation surface with marked points. The goal of this section is to find ``admissible polygons" on $(X,\omg;\Sigma)$.
Let us start with the following definition.

\begin{definition}[admissible map]\label{def:admap}
  Let $\mathbf Q\subset\R^2$ be a polygon. A continuous map $\i:\mathbf Q\to(X,\omg;\Sigma)$ is called \textit{admissible} if
  \begin{enumerate}
    \item it is a local ~half-translation, i.e. the derivative  is $\pm\begin{pmatrix}
                      1 & 0 \\
                      0 & 1
                    \end{pmatrix} $;
    \item it is an embedding in the interior of $\mathbf Q$;
    \item $\i(P)$ is a marked point if and only if $P$ is a vertex of $\mathbf{Q}$.
  \end{enumerate}
  A polygon   is called \textit{admissible} if it admits an  admissible map into $(X,\omg;\Sigma)$.
\end{definition}
In particular, an admissible map is a local isometry sending  sides and diagonals (interior to $\mathbf{Q}$) to  saddle connections.

We will  use the notation $(A_1A_2\cdots A_m)$ to represent a polygon  $\mathbf{Q}\subset\R^2$ with counterclockwise labeled vertices $A_1,A_2,\cdots A_m$.

\begin{lemma}\label{lem:admissible:boundary:disjoint}
  Let $\i:(A_1A_2\cdots A_m)\to (X,\omg;\Sigma)$ be an admissible map, then for any pair of sides $\overline{A_iA_{i+1}}$ and $\overline{A_jA_{j+1}}$, $\i(\overline{A_iA_{i+1}})$ and $\i(\overline{A_jA_{j+1}})$ are either equal or disjoint.
\end{lemma}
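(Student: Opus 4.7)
The plan is to argue by contradiction: suppose two sides $s = \overline{A_iA_{i+1}}$ and $s' = \overline{A_jA_{j+1}}$ have $\i(s)\neq \i(s')$ yet share a point $q$ that is interior to at least one of them. First I would note that conditions (1) and (3) of Definition \ref{def:admap} together imply that each of $\i(s)$ and $\i(s')$ is a single saddle connection: by (1), $\i$ is a local isometry, so each side maps to a straight segment in the $|\omg|$-metric; by (3), the relative interiors of $\i(s)$ and $\i(s')$ avoid $\Sigma$. In particular, if $q$ lies in the relative interior of one of them, then $q$ is a regular point of the flat metric, and we can work in a Euclidean chart around $q$.

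I would then split into two cases according to the local picture at $q$. In the \emph{transverse case}, $\i(s)$ and $\i(s')$ cross transversally at $q$. Pick preimages $P \in s$, $P' \in s'$ of $q$ (these are necessarily distinct points of $\partial\mathbf{Q}$, neither being a vertex, since $q \notin \Sigma$), and take small open half-disk neighborhoods $U$ of $P$ and $U'$ of $P'$ lying inside $\mathbf{Q}$. Because $\i$ is a local half-translation, $\i(U)$ and $\i(U')$ are open Euclidean half-disks near $q$, bounded respectively by $\i(s)$ and $\i(s')$. Since the two segments cross transversally at the regular point $q$, a quick check of the four possible orientation choices shows that any half-plane bounded by the first line and any half-plane bounded by the second share an open quadrant near $q$. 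Hence $\i(U)\cap \i(U') \neq \emptyset$, contradicting the injectivity of $\i$ on the interior of $\mathbf{Q}$.

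In the \emph{overlap case}, $\i(s)$ and $\i(s')$ agree on a non-degenerate arc through $q$. Two straight segments that coincide on an arc must lie along a common geodesic line of $|\omg|$, so $\i(s)$ and $\i(s')$ are subsegments of a single line, with endpoints in $\Sigma$. Condition (3) forbids any interior point of either subsegment from being marked, which forces $\{\i(A_i),\i(A_{i+1})\} = \{\i(A_j),\i(A_{j+1})\}$ and hence $\i(s) = \i(s')$, contrary to our assumption.

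The main obstacle is the transverse case: one has to set up the preimages $P, P'$ correctly as interior boundary points and track the two ``inward'' half-disks through the local half-translation $\i$, taking into account the possible sign in the derivative $\pm\mathrm{Id}$. Once the local Euclidean picture is drawn correctly, the quadrant-overlap contradiction with interior injectivity is immediate, and Case B follows cleanly from the marked-point restriction on the interiors of saddle connections.
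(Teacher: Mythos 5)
Your argument is correct, and it is exactly the content the paper compresses into its one-line proof (``it follows directly from the definition of admissible maps''): the transverse case is ruled out by interior injectivity (condition (2)) via the overlapping half-disks, and the overlap case by the marked-point condition (3). No gap; you have simply written out the details the paper leaves to the reader.
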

\begin{proof}
  It follows directly from the definition of admissible maps.
\end{proof}

\begin{definition}
  A convex polygon  on $\R^2$ is called \textit{strictly convex} if it is strictly convex at each vertex, i.e. the interior angle at each vertex is strictly less than $\pi$.
\end{definition}
 \begin{lemma}\label{lem:translation}
   Let $\P$ be a convex polygon in $\R^2$. Let
   $\i:\P\to(X,\omg;\Sigma)$ be a local half-translation such that $\i(P)\in\Sigma$ only if $P$ is a vertex. Let $Q_1,Q_2 \in\P$ be such that $\inter(\overline{Q_1Q_2})$ contains no vertices of $\P$. If $\i(Q_1)=\i(Q_2)$, then the holonomy corresponding to the closed curve $\i(\overline{Q_1Q_2})$ is a translation.
 \end{lemma}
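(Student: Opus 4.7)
The plan is to argue by contradiction. I assume the loop $\i(\overline{Q_1Q_2})$ has $-I$ holonomy (the only alternative to a translation, since the half-translation holonomy group is $\{\pm I\}$), and deduce that $\i$ fails to be locally injective near the midpoint $M:=(Q_1+Q_2)/2$, contradicting that $\i$ is a local half-translation.

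Pick a chart $\phi$ of $X$ around $\i(Q_1)=\i(Q_2)$. In this chart, $\i$ near $Q_i$ takes the form $z\mapsto \epsilon_i z+c_i$ with $\epsilon_i\in\{\pm 1\}$, and the $-I$ holonomy hypothesis is precisely that $\epsilon_1=-\epsilon_2$. A direct computation then gives
\[
\phi\circ\i(Q_1+\alpha)-\phi\circ\i(Q_2-\alpha)=(\epsilon_1+\epsilon_2)\alpha=0
\]
for every small $\alpha$ with $Q_1+\alpha,Q_2-\alpha\in\P$; hence $\i(Q_1+\alpha)=\i(Q_2-\alpha)$. Writing $v_0:=(Q_2-Q_1)/2$ and substituting $v=v_0-\alpha$, this reads $\i(M+v)=\i(M-v)$ for every $v$ in a neighborhood of $v_0$ inside $V:=\{v\in\R^2:M\pm v\in\P\}$.

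To propagate, set $S:=\{v\in V:\i(M+v)=\i(M-v)\}$, which is closed in $V$ by continuity. Re-applying the perturbation argument at any $v\in S$ for which both $M\pm v$ are non-vertices shows $S$ is open there; the opposite-signs condition at the new pair persists because holonomy is locally constant along the continuously varying family of loops $\i(\overline{(M-v)(M+v)})$ in $X\setminus\Sigma$, and $V$ minus the finite vertex-bad set is connected. Convexity of $\P$ makes $V=(\P-M)\cap(M-\P)$ a centrally symmetric convex set containing $0$, and removing finitely many points leaves it connected; thus $S$ equals all of $V$ minus the bad set. In particular, $\i(M+v)=\i(M-v)$ for every small enough $v\in V$.

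Since $\inter(\overline{Q_1Q_2})$ contains no vertex, $M$ itself is not a vertex. If $M\in\inter\P$, then $V$ contains a $2$-dimensional neighborhood of $0$, and the symmetry makes $\i$ two-to-one on an open neighborhood of $M$, contradicting that $d\i=\pm I$ makes $\i$ a local homeomorphism. Otherwise, convexity forces $Q_1,Q_2,M$ to lie on a common edge $e$ of $\P$ (any chord of a convex polygon meeting the boundary in the chord's interior must lie entirely on an edge), so $V$ contains a $1$-dimensional neighborhood of $0$ along $e$, and the symmetry forces $\i|_e$ to fold at $M$, contradicting that $\i|_e$ is an immersion. The most delicate step is the propagation: verifying that the opposite-signs condition persists as the pair $(M-v,M+v)$ varies in $V$, which relies on the constancy of holonomy along a continuously varying family of loops in $X\setminus\Sigma$ and requires careful bookkeeping of charts.
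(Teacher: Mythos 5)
Your proof is correct and follows essentially the same route as the paper's: assume the identification is a strict half-translation, observe that it is then the point reflection about the midpoint $M$ of $\overline{Q_1Q_2}$, and derive a contradiction at $M$ from the hypothesis that $\inter(\overline{Q_1Q_2})$ contains no vertex. The paper phrases the contradiction as ``$\i(M)$ would be a cone point of angle $\pi$, hence a marked point, hence a vertex,'' which is exactly the two-to-one folding at $M$ that you rule out via local injectivity; your propagation step (which, note, only ever needs to run along the segment $\overline{Q_1Q_2}$ itself, where the no-vertex hypothesis keeps the loops away from $\Sigma$) simply makes explicit what the paper leaves implicit.
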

 \begin{proof}

   Suppose to the contrary that the holonomy is a strict half-translation, i.e. $Q_1$ and $Q_2$ are identified by  a strict half-translation. Let $Q\in\P$ be the midpoint of $Q_1,Q_2$. Then $\i(Q)$ would be a marked point of cone angle $\pi$. Hence, $Q$ is a vertex of $\P$.  This is a contradiction which proves the lemma.
 \end{proof}

\begin{lemma}[Admissible extension I]\label{lem:extension1}
 Let $(A_1A_2\cdots A_m)\subset\R^2$ be a polygon without self-intersections.
  Let $\i:(A_1A_2\cdots A_{n})\to (X,\omg;\Sigma)$ and $\tilde{\i}:(A_1A_{n}\cdots A_m)\to (X,\omg;\Sigma)$ be two admissible maps, $3\leq n\leq m-1$, such that
  \begin{itemize}
    \item $\i(\overline{A_1A_{n}})=\tilde{\i}(\overline{A_1A_{n}})$;
     \item $\i(\overline{A_1A_n})\neq \i(\overline{A_iA_{i+1}})$ for all $1\leq i\leq n-1$ and $\tilde{\i}(\overline{A_1A_n})\neq \tilde{\i}(\overline{A_jA_{j+1}})$ for all $n\leq j\leq m$;
    \item for  $\forall 1\leq i\leq n-1$,  $ \forall n\leq j\leq m$, $\i(\overline{A_iA_{i+1}})$ and $\tilde{\i}(\overline{A_jA_{j+1}})$ are either equal or disjoint.
  \end{itemize}
  Then the map $\hat{\i}:(A_1A_2\cdots A_m)\to (X,\omg;\Sigma)$ defined by
  $$\hat{\i}|_{(A_1A_2\cdots A_{n})}=\i,~ \hat{\i}|_{(A_1A_{n}\cdots A_m)}=\tilde{\i}$$
  is admissible.
\end{lemma}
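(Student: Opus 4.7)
The plan is to verify in order the three defining properties of an admissible map for the glued map $\hat{\i}$; well-definedness, the local half-translation condition, and the marked-point condition are essentially routine, while the interior-embedding condition is the substantive step. First note that for $\hat{\i}$ to even be well-defined, $\i$ and $\tilde{\i}$ must agree pointwise on $\overline{A_1A_n}$; since each restricts to an injection on this straight segment (a local isometry of an interval is monotone), this follows from the set equality given in the hypothesis after matching endpoints, which I would spell out.

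Away from the diagonal, $\hat{\i}$ inherits the local half-translation and marked-point properties directly from $\i$ and $\tilde{\i}$. At an interior point $P$ of $\overline{A_1A_n}$, the image $\i(P)=\tilde{\i}(P)$ lies in $X\setminus\Sigma$ since the interior of a saddle connection avoids $\Sigma$, so a local Euclidean chart of $(X,\omg)$ is available there. In this chart both $\i$ and $\tilde{\i}$ are affine with derivative $\pm I$; their pointwise agreement along the diagonal forces matching derivatives in the tangent direction to the diagonal, which pins down the sign and makes $\hat{\i}$ a local half-translation on a full neighborhood of $P$. For the marked-point condition, every vertex $A_k$ of the big polygon is a vertex of $\P_1:=(A_1\cdots A_n)$ or $\P_2:=(A_1A_n\cdots A_m)$ and is sent into $\Sigma$; conversely, any non-vertex point of the big polygon lies in $\inter(\P_1)$, in $\inter(\P_2)$, in the interior of $\overline{A_1A_n}$, or in the interior of a side shared with the big polygon, and in each case $\i$ or $\tilde{\i}$ sends it outside $\Sigma$.

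The principal obstacle is injectivity of $\hat{\i}$ on $\inter(A_1\cdots A_m)$. Since $\i$ embeds $\inter(\P_1)$, $\tilde{\i}$ embeds $\inter(\P_2)$, and both agree on the interior of the diagonal, this reduces to showing
\[
\i(\inter(\P_1))\cap \tilde{\i}(\inter(\P_2)) = \emptyset.
\]
Suppose toward contradiction that $\i(P_1)=\tilde{\i}(P_2)$ with $P_1\in\inter(\P_1)$ and $P_2\in\inter(\P_2)$. Since both maps have derivative $\pm I$ locally, there is a unique affine map $\psi(x)=\pm x+c$ of $\R^2$ carrying a neighborhood of $P_1$ onto one of $P_2$ with $\i=\tilde{\i}\circ\psi$ there. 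Let $V$ be the connected component of $\inter(\P_1)\cap\psi^{-1}(\inter(\P_2))$ containing $P_1$; by analytic continuation along paths in $V$, the identity $\i=\tilde{\i}\circ\psi$ extends to all of $V$. The boundary of $V$ lies in $\partial\P_1\cup\psi^{-1}(\partial\P_2)$, and since $\psi$ is a rigid global motion while $\P_1$ and $\P_2$ are bounded, $V$ must be a proper subset whose closure contains a point $Q$ where either a side of $\P_1$ meets $\psi^{-1}(\inter(\P_2))$, or $\psi(Q)$ sits on a side of $\P_2$. Analyzing the resulting side-to-side contact via hypothesis~(3) together with Lemma~\ref{lem:admissible:boundary:disjoint} forces an equality $\i(\overline{A_iA_{i+1}}) = \tilde{\i}(\overline{A_jA_{j+1}})$. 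Iterating this analysis to follow $\psi$ all the way across $\P_1$, one eventually identifies some side image $\i(\overline{A_iA_{i+1}})$ with $\i(\overline{A_1A_n})$ (or similarly for $\tilde{\i}$), directly contradicting hypothesis~(2). This combinatorial--geometric tracking of boundary contacts is the step I expect to require the most careful work.
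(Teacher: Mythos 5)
Your setup (gluing, well-definedness, the local half-translation property along the diagonal, and the reduction of injectivity to $\i(\inter(\P_1))\cap\tilde{\i}(\inter(\P_2))=\emptyset$) matches the paper, and your rigid motion $\psi$ is the paper's half-translation $\zeta$. But the endgame of your boundary-tracking argument has a genuine gap: you claim the iteration terminates in an equality of a \emph{side} image of $\P_1$ with a \emph{side} image of $\P_2$, or of some side image with $\i(\overline{A_1A_n})$, "directly contradicting hypothesis (2)." The case your tracking actually terminates in — and the one the paper has to work hardest on — is the one where $\psi$ carries the diagonal to the diagonal, i.e.\ $\psi^{-1}(\overline{A_1A_n})$ coincides with $\overline{A_1A_n}$ itself. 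Hypothesis (3) says nothing about the diagonal (it only pairs $\overline{A_iA_{i+1}}$, $1\le i\le n-1$, with $\overline{A_jA_{j+1}}$, $n\le j\le m$), and hypothesis (2) only forbids a \emph{side} of either sub-polygon from having the same image as the diagonal; it does not forbid the diagonal being matched with itself by a point reflection. So in this case there is no contradiction with (2) at all, and your proof stops short.

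The paper's route through this case is: the side-disjointness hypothesis forces one of the polygons $\P_1$, $\zeta(\P_2)$ to be nested inside the other, so $\zeta(\overline{A_1A_n})$ is a segment of $\P_1$ whose image is the saddle connection $\i(\overline{A_1A_n})$; it cannot be a diagonal of $\P_1$ (a diagonal's image meets the interior of $\i(\P_1)$), so it is a side, and hypothesis (2) forces that side to be $\overline{A_1A_n}$ itself. Since $\zeta$ maps $\P_2$ across the diagonal into $\P_1$ while preserving $\overline{A_1A_n}$ setwise, $\zeta$ must be a \emph{strict} half-translation, i.e.\ the point reflection about the midpoint of $\overline{A_1A_n}$; that midpoint is then a cone point of angle $\pi$, hence a marked point in the interior of a side, contradicting admissibility of $\i$ (marked points occur only at vertices; cf.\ Lemma~\ref{lem:translation}). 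Your proposal never invokes this cone-angle/marked-point mechanism, and without it the argument cannot close. You would need to add this case explicitly, and also make precise the "iteration" step, which as written does not address contacts between a side of one polygon and the (translated) diagonal of the other.
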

\begin{proof}
  To show that $\hat{\i}$ is admissible, it suffices to show that $\hat{\i}$ is an embedding in the interior. Suppose to the contrary that there exist $Q_1,Q_2\in\inter(A_1A_2\cdots A_m)$ such that $\hat{\i}(Q_1)=\hat{\i}(Q_2)$. Moreover, we may assume that $Q_2\in \inter(A_1A_{n}\cdots A_m)\cup \inter(\overline{A_1A_{n}})$ and  $Q_1\in \inter(A_1A_2\cdots A_{n})\cup \inter(\overline{A_1A_{n}})$.
  Since $(X,\omg;\Sigma)$ is a half-translation surface, the identification between $Q_1$ and $Q_2$ by $\hat{\i}$ is a  half-translation of $\R^2$. Let $\zeta$ be the induced half-translation  sending $Q_2$ to $Q_1$. Let $(A_1'A_{n}' \cdots A_m')$ be the image of $(A_1A_{n} \cdots A_m)$ by $\zeta$. Then
  $$ {\i}(P)= \tilde\i(\zeta^{-1}(P)), ~\forall P\in (A_1A_{n} \cdots A_m)\cap (A_1'A_{n}'\cdots A_m').$$
  Notice that
  $$Q_1\in \inter(A_1A_2\cdots A_{n})\cap \inter(A_1'A_{n}'\cdots A_m').$$
  Combined with the third assumption in the setting, this implies that
   $$(A_1'A_{n}'\cdots A_m')\subset (A_1A_2\cdots A_{n}) \text{ or  }
    (A_1A_2\cdots A_{n})\subset(A_1'A_{n}'\cdots A_m').$$
  Without loss of generality, we may assume that $(A_1'A_{n}'\cdots A_m')\subset (A_1A_2\cdots A_{n})$. Then  $\overline{A_1'A_n'}=\overline{A_iA_j}$ for some side or diagonal of $(A_1A_2\cdots A_n)$ (because both $(A_1'A_{n}'\cdots A_m')$ and $(A_1A_2\cdots A_{n})$ are admissible polygons of $(X,\omg;\Sigma)$). Hence,
  $\i(\overline{A_1A_n})=\tilde{\i}(\overline{A_1A_n})=
  \i(\overline{A_1'A_n'})=\i(\overline{A_iA_j})$. Since $\i$ is admissible, it follows that $\overline{A_iA_j}$ can not be a diagonal. Then $\overline{A_iA_j}$ must be a side. It then follows from the second assumption in the setting that $\overline{A_iA_j}=\overline{A_1A_n}$. Therefore, $\zeta(\overline{A_1A_n})=\overline{A_1'A_n'}=\overline{A_1A_n}$. Consequently,  $\zeta$ is a strict half-translation, and the midpoint of $\overline{A_1A_n}$ is marked point of cone angle $\pi$. This contradicts to the assumption that $\i$ is admissible.
\end{proof}

 Let $\H:=\{(x,y)\in\R^2: -r<y<r\}$ be a horizontal strip with upper boundary $\partial_+\H$ and lower boundary $\partial_-\H$. Let $A_i=(a_i,b_i)\in\overline{\H}$, $i=1,2,\cdots, n$,  such that
 \begin{itemize}
   \item  $b_n=-r\leq b_{n-1}\leq\cdots\leq b_2\leq b_1=r$;
   \item $a_1=a_n=0$, $a_i<0$ for all $i=2,\cdots,n-1$.
 \end{itemize}
 \begin{lemma}[Admissible extension II]\label{lem:emdedding:strip}
   Let $\H, A_1,~A_2\cdots A_n$ be as above. Let $\i:(A_1A_2\cdots A_n)\to (X,\omg;\Sigma)$ be an admissible map such that $\i(\overline{A_1A_n})\neq \i(\overline{A_iA_{i+1}})$ for any $1\leq i\leq n-1$. Then there exists $A_{n+1}\in\overline{\H}$ on the right side of $\overline{A_1A_n}$, such that $\i$ can be admissibly extend to $(A_1A_2\cdots A_nA_{n+1})$. Moreover, if the image of $(A_1A_2\cdots A_n)$ by $\i$ is not contained in any horizontal cylinder, then we have $A_{n+1}\in\H$.
 \end{lemma}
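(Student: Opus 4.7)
The plan is to develop the right-hand side of the vertical saddle connection $\sg:=\i(\overline{A_1A_n})$ by horizontal rightward flow inside the closed strip $\overline{\H}$, identify a suitable marked-point preimage, and splice the resulting triangle onto $(A_1A_2\cdots A_n)$ using Lemma \ref{lem:extension1}. For each $y\in(-r,r)$, let $L(y)\in(0,\infty]$ denote the length one can flow horizontally rightward from $\i((0,y))$ on $X$ before encountering a marked point or wrapping back to $\sg$ via a closed horizontal orbit. Setting $\Omg:=\{(x,y):y\in(-r,r),\,0<x<L(y)\}$, this produces a local half-translation $\psi:\Omg\to X$ extending $\i$ along $\{0\}\times(-r,r)$, injective, and avoiding $\Sigma$. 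Finite $|\omg|$-area forces $\overline{\Omg}$ to be bounded, so the set $\mathcal{V}$ of marked-point preimages of $\psi$ in $\overline{\Omg}$ with $x>0$ is finite.

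First suppose $\i((A_1A_2\cdots A_n))$ is not contained in any horizontal cylinder. Then the subset $\mathcal{V}_{\mathrm{int}}:=\{(x,y)\in\mathcal{V}:y\in(-r,r)\}$ is non-empty: otherwise every horizontal trajectory through the interior of $\sg$ would close up without meeting $\Sigma$, confining the polygon image to a horizontal cylinder. I would take $A_{n+1}=(x_0,y_0)\in\mathcal{V}_{\mathrm{int}}$ to minimize the $x$-coordinate. Every point of the closed triangle $T:=(A_1A_nA_{n+1})$ has $x$-coordinate at most $x_0$ (as a convex combination of $0,0,x_0$), so by minimality of $x_0$ no other element of $\mathcal{V}$ lies in the interior of $T$; moreover $L(y)\geq x_0$ for every $y\in(-r,r)$, giving $T\subset\overline{\Omg}$. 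Hence $\psi|_T$ is an admissible map of $T$ into $(X,\omg;\Sigma)$ with $A_{n+1}\in\H$, as required.

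If instead $\i((A_1A_2\cdots A_n))$ lies in a horizontal cylinder $C$, then marked-point preimages in $\overline{\Omg}$ can only appear at $y=\pm r$, since the cylinder interior contains no singular points. I would follow the top horizontal boundary of $C$ rightward from $\i(A_1)$ to its next singular point at distance $x_{\mathrm{top}}>0$ and set $A_{n+1}:=(x_{\mathrm{top}},r)\in\overline{\H}\setminus\H$ (or symmetrically use the bottom boundary). A direct check using the cylinder's circumference $c\geq x_{\mathrm{top}}$ shows $T\subset\overline{\Omg}$, and the interior of $T$ avoids all of $\mathcal{V}$ because its $y$-coordinates lie strictly in $(-r,r)$.

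Finally, I would apply Lemma \ref{lem:extension1} to glue $\i$ on $(A_1A_2\cdots A_n)$ to $\psi|_T$ on $T$ along $\overline{A_1A_n}$. The hypothesis $\i(\overline{A_1A_n})\neq\i(\overline{A_iA_{i+1}})$ is given, and the corresponding inequality for the two new sides of $T$ follows from comparing holonomies: $\sg$ is vertical of length $2r$, while $\psi(\overline{A_nA_{n+1}})$ and $\psi(\overline{A_{n+1}A_1})$ have non-vertical directions by the construction of $A_{n+1}$. The main technical obstacle I foresee is the equality-or-disjointness condition across the two maps: sides of $(A_1\cdots A_n)$ and sides of $T$ do not automatically live on distinct subsets of $X$ even though they lie on opposite sides of $\sg$ in $\R^2$, because the underlying surface can wrap around. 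I expect this to be resolved by exploiting the minimality of $x_0$ to rule out spurious coincidences in the generic case, and by a case analysis of how the boundary saddle connections of $C$ interact with the original polygon in the cylinder case.
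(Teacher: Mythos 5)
Your overall strategy coincides with the paper's: flow horizontally to the right of $\sg=\i(\overline{A_1A_n})$, locate the first marked point reached, form the triangle $T=(A_1A_nA_{n+1})$, and splice it on via Lemma \ref{lem:extension1}. But the proposal has a genuine gap precisely at the step you yourself flag as ``the main technical obstacle'': verifying the equality-or-disjointness hypothesis of Lemma \ref{lem:extension1} between the two new sides $\overline{A_nA_{n+1}},\overline{A_{n+1}A_1}$ and the old sides $\overline{A_iA_{i+1}}$. This is not a routine loose end --- it is the bulk of the paper's proof. The paper handles it by taking a putative coincidence point $S\in\inter(\overline{A_{n+1}A_1})$, $\hat S\in\inter(\overline{A_{k-1}A_k})$, splitting on whether the deck identification is a translation or a strict half-translation, transporting the whole pentagon $(A_1P_aA_{n+1}Q_aA_n)$ by that isometry, and deriving a contradiction from the minimality of $a$ together with the constraints on the $y$-coordinates of the $A_i$ (the translated $\hat A_{n+1}$ is forced into $\inter(A_1\cdots A_n)$, contradicting admissibility). ``I expect this to be resolved by exploiting the minimality of $x_0$'' is a correct instinct about which ingredient is used, but no argument is given, so the lemma is not proved.

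A second, smaller issue: the containment $T\subset\overline{\Omg}$ rests on the assertion $L(y)\geq x_0$ for all $y$, which you attribute to minimality of $x_0$. Minimality only controls the marked-point stopping condition in your definition of $L$, not the wrap-back condition: a priori a trajectory could return to $\sg$ at time $R(y)<x_0$ without meeting $\Sg$. The claim is in fact true in the non-cylinder case, but proving it requires an extra argument --- e.g.\ that a maximal interval on which the return time is constant and $<x_0$ must have an endpoint whose trajectory meets a marked point before time $x_0$ (contradicting minimality) unless that interval is all of $(-r,r)$, which forces the horizontal-cylinder case you have excluded. Note that the paper sidesteps this entirely: it extends the local half-translation to the full region first and proves injectivity of $\i$ on $\inter(A_1A_nA_{n+1})$ afterwards by the slope comparison with Lemma \ref{lem:translation}, so no injectivity domain $\Omg$ needs to be controlled in advance. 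I would either adopt that route or supply the return-time argument explicitly; and in the cylinder case, the ``direct check'' and the interaction of the new boundary saddle connection of $\C$ with the old sides likewise need to be written out.
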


\begin{figure}[b]
  \centering
 \begin{tikzpicture}[scale=1.1]
   \draw[dashed] (0,0)--(5,0)--(5,-3)--(0,-3)--cycle;
  \path[fill,black] (3.5,0) coordinate(a1) circle(0.05) node[above]{\tiny{$A_1$}}
   (1,-1) coordinate(a3) circle(0.05) node[above]{\tiny{$A_{k-1}$}}
   (2.6,-2.5) coordinate(a4) circle(0.05) node[below]{\tiny{$A_k$}}
   (3.5,-3) coordinate(a6) circle(0.05) node[below]{\tiny{$A_n$}}
   (5,-0.5) coordinate(a7) circle(0.05) node[right]{\tiny{$A_{n+1}$}}

    (3.5-2.6,0-1.5) coordinate(a12) circle(0.05) node[left]{\tiny{$\hat A_1$}}
   (3.5-2.6,-3-1.5) coordinate(a62) circle(0.05) node[left]{\tiny{$\hat A_n$}}
    (5.0-2.6,-0.5-1.5) coordinate(a72) circle(0.05) node[right]{\tiny{$\hat A_{n+1}$}};
   \draw
    (5,0) coordinate(p3) circle(0.05) node[above]{\tiny{$P_a$}}
    (5,-3) coordinate(q3) circle(0.05) node[below]{\tiny{$Q_a$}}
     (5-2.6,0-1.5) coordinate(p32) circle(0.05) node[right]{\tiny{$\hat P_a$}}
    (5-2.6,-3-1.5) coordinate(q32) circle(0.05) node[right]{\tiny{$\hat Q_a$}}
   (3.5,-1)coordinate(u) circle(0.05) node[right]{\tiny{$U$}}
    (3.5,-2.5)coordinate(v) circle(0.05) node[right]{\tiny{$V$}}
      (5-2.6-0.5,0-1.85)  circle(0.05)node[below]{\tiny{$\hat{S}$}}
       (5-0.5,-0.35)circle(0.05) node[below]{\tiny{$S$}}
       (5-2.6,-5)node[right]{\tiny{(a)}};
    \draw (a1)--(a6)--(a7)--cycle  (a3)--(a4);
    \draw[dashed] (a12)--(p32)--(q32)--(a62)--cycle  (a12)--(a72)--(a62)
    (a3)--(u)  (a4)--(v);

    \draw[dashed] (0+6,0)--(5+6,0)--(5+6,-3)--(0+6,-3)--cycle;
  \path[fill,black] (3.5+6,0) coordinate(a1) circle(0.05) node[above]{\tiny{$A_1$}}
   (1+6+1.7,-0.5) coordinate(a3) circle(0.05) (1+6+1.9,-0.5)node[below]{\tiny{$A_{k-1}$}}
   (1.2+6,-2.5) coordinate(a4) circle(0.05) node[below]{\tiny{$A_k$}}
   (3.5+6,-3) coordinate(a6) circle(0.05) node[below]{\tiny{$A_n$}}
   (5+6,-2.5) coordinate(a7) circle(0.05) node[right]{\tiny{$A_{n+1}$}}

    (3.5-2.6+6,0-1.5+2.5) coordinate(a12) circle(0.05) node[left]{\tiny{$\hat A_1$}}
   (3.5-2.6+6,-3-1.5+2.5) coordinate(a62) circle(0.05) node[left]{\tiny{$\hat A_n$}}
    (5.0-2.6+6,-1.5) coordinate(a72) circle(0.05) node[right]{\tiny{$\hat A_{n+1}$}};
   \draw
   (5+6,0) coordinate(p3) circle(0.05) node[above]{\tiny{$P_a$}}
    (5+6,-3) coordinate(q3) circle(0.05) node[below]{\tiny{$Q_a$}}
     (5-2.6+6,0-1.5+2.5) coordinate(p32) circle(0.05) node[right]{\tiny{$\hat P_a$}}
    (5-2.6+6,-3-1.5+2.5) coordinate(q32) circle(0.05) node[right]{\tiny{$\hat Q_a$}}
   (3.5+6,-0.5)coordinate(u) circle(0.05) node[right]{\tiny{$U$}}
    (3.5+6,-2.5)coordinate(v) circle(0.05) node[right]{\tiny{$V$}}
     (2.2+6,-1.2)  circle(0.05)node[left]{\tiny{$\hat{S}$}}
       (5+6-0.2,-2.5+0.3)circle(0.05) node[left]{\tiny{$S$}}
       (5-2.6+6,-5)node[right]{\tiny{(b)}};
    \draw (a1)--(a6)--(a7)--cycle  (a3)--(a4);
    \draw[dashed] (a12)--(p32)--(q32)--(a62)--cycle  (a12)--(a72)--(a62)
    (a3)--(u)  (a4)--(v);
 \end{tikzpicture}
  \caption{Admissible extension of polygons.}\label{Fig:embedding:strip}
\end{figure}
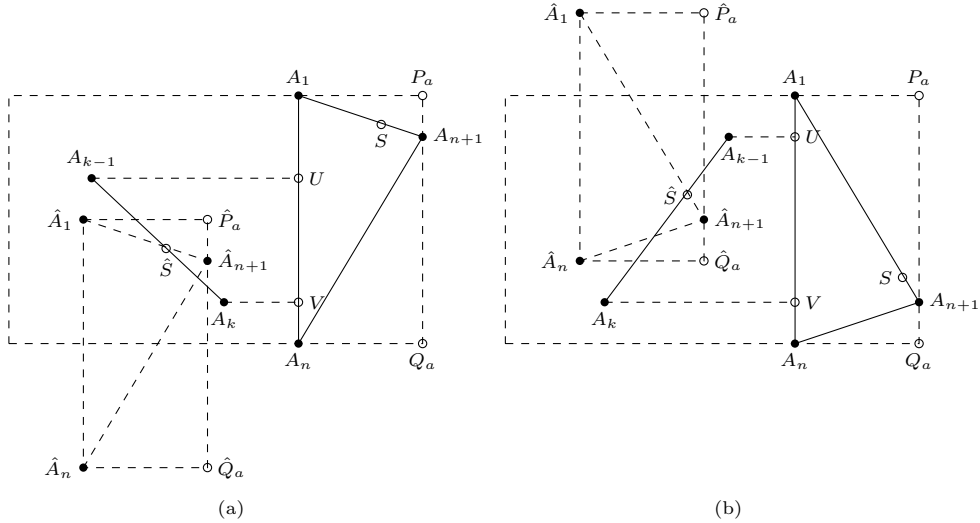
 \begin{proof}
   Let $P_t\in\partial_+\H$ and $Q_t\in\partial_-\H$ such that $\overrightarrow{A_1P_t}=\overrightarrow{A_nQ_t}=t(1,0)$, where $t>0$. For $t>0$ small, by following the horizontal lines  on $(X,\omg;\Sigma)$ through points in $\i(\overline{A_1A_n})$, we can extend $\i$  to  the polygon $(A_1A_2\cdots A_n Q_t P_t)$ which remains to be  a local half-translation. In fact, the extension fails only if some interior point of $\overline{P_tQ_t}$ is mapped to a marked point by $\i$. The assumptions that $\i:(A_1A_2\cdots A_n)\to (X,\omg;\Sigma)$ is an admissible map and that $\i(\overline{A_1A_n})\neq \i(\overline{A_iA_{i+1}})$ for any $1\leq i\leq n-1$ imply that for small $t>0$, the extension  $\i:(A_1A_2\cdots A_n Q_t P_t)\to(X,\omg;\Sigma)$ is an embedding in the interior.

   Now, let us consider the horizontal rays $\{L_P: P\in \i(\overline{A_1A_n})\}$ on $(X,\omg;\Sigma)$ which emanate from the  points of $\i(\overline{A_1A_n})$ and which flow away from $(A_1A_2$ $\cdots A_n)$.  Since $(X,\omg;\Sigma)$ is of finite area and  $\{\i(A_1),\i(A_n)\}\subset\Sigma$,  it follows that some of such rays would meet  marked points during the flowing process.  Let $L_T$, $T\in \i(\overline{A_1A_n})$ be a ray which meet marked points first (if there are more than one candidates, we choose the one closer to $A_1$).  Denote by $a$ the distance it travels from $T$ to the first marked point.  Let $A_{n+1}=(a_{n+1},b_{n+1})\in\overline{P_aQ_a}$ be such that $\i(A_{n+1})$ is the marked point met by $L_T$  (see Figure \ref{Fig:embedding:strip}). Then $a_{n+1}=a>0$ and $b_{n+1}\geq-r$.

    We claim that $i:(A_1A_nA_{n+1})\to (X,\omg;\Sigma)$ is admissible.  Suppose to the contrary that there exist $M=(x,y),~\hat{M}=(\hat{x},\hat{y})\in\mathbf{int}(A_1 A_nA_{n+1})$ such that $\i(M)=\i(\hat{M})$. By Lemma \ref{lem:translation}, the identification between $\hat{M}$ and $M$ by $\i$ is a translation.  Without loss of generality, we may assume that $x\geq \hat{x}$. Furthermore, we assume that $y\geq \hat{y}$. (The proof for the case $y\leq \hat{y}$ is similar.) Let $\mathbf{k}$ and $k$ be respectively the slopes of $\overline{A_{n}A_{n+1}}$ and $\overline{\hat{M}M}$. Then $\mathbf{k},k\geq 0$. If $k\leq \mathbf{k}$, then $\exists \tilde{M}\in\inter(A_1A_{n}A_{n+1})\backslash\{A_1,A_n,A_{n+1}\}$ such that $\overrightarrow{\tilde{M}A_{n+1}}=\overrightarrow{\hat{M}M}$. Then $\i(\tilde{M})=\i(A_{n+1})\in \Sigma$, which contradicts to the construction of $A_{n+1}$. If $k> \mathbf{k}$, then  $\exists \tilde{M}\in\inter(A_1A_{n}A_{n+1})\backslash\{A_1,A_n,A_{n+1}\}$ such that $\overrightarrow{A_n\tilde{M}}=\overrightarrow{\hat{M}M}$. Then $\i(\tilde{M})=\i(A_{n})\in \Sigma$, which also contradicts to the construction of $A_{n+1}$.

    \vskip5pt
    By Lemma \ref{lem:extension1},
    to show that $\i:(A_1A_2\cdots A_{n+1})\to(X,\omg;\Sigma)$ is admissible, it suffices to show that  for  $\forall 1\leq i\leq n-1$,  $ \forall n\leq j\leq n+1$, $\i(\overline{A_iA_{i+1}})$ and ${\i}(\overline{A_jA_{j+1}})$ are either equal or disjoint. In the following, we shall consider $\overline{A_{n+1}A_{1}}$. The proof for $\overline{A_nA_{n+1}}$ is similar.

    Suppose to the contrary that there exists $2\leq k\leq n$ such that $\i(\overline{A_{n+1}A_1})$ intersects $\i(\overline{A_{k-1}A_{k}})$ but  $\i(\overline{A_{n+1}A_1})\neq\i(\overline{A_{k-1}A_{k}})$. Let $S\in\inter(\overline{A_{n+1}A_1})$ and $\hat{S}\in\inter(\overline{A_{k-1}A_k})$ such that $\i(S)=\i(\hat{S})$. Since $(X,\omg;\Sigma)$ is a half-translation surface, the identification between $S$ and $\hat{S}$ by $\i$ has two cases:  translation or strict half-translation.

    \vskip 5pt
    \begin{itemize}
      \item \textbf{Case 1}. The identification between $S$ and $\hat{S}$ is a translation. Let $\Theta:\R^2\to\R^2$ be the translation such that $\Theta(S)=\hat{S}$.
    Let $(\hat{A}_1\hat{P}_a\hat{A}_{n+1}\hat{Q}_a\hat{A}_n)$ be the image of $(A_1P_aA_{n+1}Q_aA_n)$  under the translation $\Theta$. In particular,   $(\hat{A}_1\hat{P}_a\hat{Q}_a\hat{A}_n)$ is a parallelogram with $\overline{\hat{A}_1\hat{P}_a}$ horizontal and $\overline{\hat{P}_a\hat{Q}_a}$ vertical.
    Moreover,
    \begin{equation}\label{eq:hat:S}
      \inter({\overline{\hat{A}_1\hat{A}_{n+1}}})\cap \inter({ \overline{A_{k-1}A_k}})=\{\hat{S}\}.
    \end{equation}

    By the construction of $A_{n+1}$, we see that
    \begin{equation}\label{eq:k:k+1}
      A_{k-1},A_k\notin
    \inter(\hat{A}_1\hat{P}_a\hat{A}_{n+1}\hat{Q}_a\hat{A}_n)\cup
    \inter(\overline{\hat{A}_1\hat{P}_a})\cup\inter(\overline{\hat{A}_n\hat{Q}_a}).
    \end{equation}

    Since $\i(\overline{A_1A_n})\neq\i(\overline{A_{k-1}A_k})$ and $\i:(A_1A_2\cdots A_n)\to(X,\omg;\Sigma)$ is admissible, it follows that $\overline{A_{k-1}A_k}$ and $\overline{\hat{A}_1\hat{A}_{n}}$ are disjoint.
   Combining  (\ref{eq:hat:S}), (\ref{eq:k:k+1}), and the assumption about the $y$-coordinates of $A_{k-1}$ and $A_k$, we see that
     $\overline{A_{k-1}A_{k}}$ intersects both $\overline{\hat{A}_1\hat{P}_a}$ and $\overline{\hat{P}_a\hat{Q}_a}$ (see Figure \ref{Fig:embedding:strip}(a)), or
    it intersects both $\overline{\hat{A}_n\hat{Q}_a}$ and $\overline{\hat{P}_a\hat{Q}_a}$ (see Figure \ref{Fig:embedding:strip}(b)). Without loss of generality, we assume that
    $\overline{A_{k-1}A_{k}}$ intersects both $\overline{\hat{A}_1\hat{P}_a}$ and $\overline{\hat{P}_a\hat{Q}_a}$.

     Let $U,V\in\overline{A_1A_n}$ such that $\overline{A_{k-1}U}$ and $\overline{A_{k}V}$ are horizontal.
     Then $$\hat{A}_{n+1}\in
      \inter(A_{k-1}A_{k}VU)\subset\inter(A_1A_2\cdots A_n),$$
       which contradicts with the assumption that $(A_1A_2\cdots A_n)$ is an admissible polygon.

     \vskip5pt
      \item \textbf{Case 2}. The identification between $S$ and $\hat{S}$ is a strict half-translation. Let $\widetilde{\Theta}$ be  the half-translation of $\R^2$ such that $\widetilde{\Theta}(S)=\hat{S}$.  Let $(\tilde{A}_1\tilde{P}_a\tilde{A}_{n+1}\tilde{Q}_a\tilde{A}_{n})$ be the image of $(A_1P_aA_{n+1}Q_aA_n)$ by $\widetilde{\Theta}$. Then
    $$ (\tilde{A}_1\tilde{P}_a\tilde{A}_{n+1}\tilde{Q}_q\tilde{A}_{n})\cap\overline{\H}
    \subset\{(x,y)\in\overline{\H}:x\geq0\}. $$
    (Otherwise, some ray in $\{L_P:P\in\overline{A_1A_n}\}$ would meet the marked point $\i(A_n)$ before $L_T$ meet $\i(A_{n+1})$.) In particular, we have $\hat{S}\in (A_1A_{n+1}A_n)$.  It then follows from Lemma \ref{lem:translation} that the identification between $S$ and $\hat{S}$ by $\i$ is a translation, which contradicts to the assumption of this case.
    \end{itemize}

   \vskip5pt

    If in addition, $\i(\inter(A_1A_2\cdots A_n))$ is not contained in any horizontal cylinder,  we consider the family of rays  $\{L_P: P\in\mathbf{int}(\i(\overline{A_1A_n}))\}$  instead of $\{L_P:P\in\overline{A_1A_n})\}$.  It then follows that $A_{n+1}\in\H$.
 \end{proof}

  Let $\H_1:=\{(x,y)\in\R^2: 0<y<d_1\}$,  $\H_2:=\{(x,y)\in\R^2: -d_2<y<0\}$. Let
 $\partial_+=\{(x,y)\in\R^2: y=d_1\}$, $\partial_0=\{(x,y)\in\R^2: y=0\}$, $\partial_-=\{(x,y)\in\R^2: y=-d_2\}$. Let
 $(A_1A_2A_3A_4A_5)$ be a strictly convex pentagon such that
 $$ A_1\in \partial_+, A_2\in \H_1\cup \partial_+, \overline{A_3A_5}\subset \partial_0, A_4\in\partial_- .$$
 \begin{lemma}\label{lem:twostrips}
   Let $(A_1A_2A_3A_4A_5)$, $\H_1,\H_2,\partial_+,\partial_0,\partial_-$ be as above. Let $\i:(A_1A_2A_3A_4A_5)\to(X,\omg;\Sigma)$ be an admissible map. Suppose that $\i(\overline{A_2A_3})\neq \i(\overline{A_5A_1})$.
   \begin{itemize}
     \item If $d_1\geq d_2$, then there exists $A_6\in\overline{\H}_1$ such that $\i$ can be admissibly extended to $(A_1A_2A_3A_4A_5A_6)$.  If in addition,   $\i(\inter(A_1A_2A_3A_5))$ is not  contained in  any horizontal cylinder, then $A_6\in \H_1$.
     \item If $d_1\leq d_2$, then there exist $A_7\in\overline{\H}_2$ such that $\i$ can be admissibly extended to $(A_1A_2A_3A_4A_7A_5)$, and $A_8\in\overline{\H}_2$ such that $\i$ can be admissibly extended to $(A_1A_2A_3A_8A_4A_5)$.
   \end{itemize}
 \end{lemma}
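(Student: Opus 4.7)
The plan is to split the pentagon $(A_1A_2A_3A_4A_5)$ along the diagonal $\overline{A_3A_5}$ into the strictly convex quadrilateral $Q=(A_1A_2A_3A_5)\subset\overline{\H}_1$ and the strictly convex triangle $T=(A_3A_4A_5)\subset\overline{\H}_2$, and to produce each desired extension by applying Lemma~\ref{lem:emdedding:strip} to the sub-polygon sitting in the larger of the two strips, re-gluing the complementary sub-polygon afterwards via Lemma~\ref{lem:extension1}. Both restrictions $\i|_Q$ and $\i|_T$ are admissible by restriction, and strict convexity of the pentagon is inherited by $Q$ and $T$.

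In Case 1 ($d_1\geq d_2$), I apply Lemma~\ref{lem:emdedding:strip} to $(Q,\i|_Q)$ in the strip $\H_1$, with the side $\overline{A_5A_1}$ playing the role of the segment $\overline{A_1A_n}$ through which horizontal flow escapes (after an obvious affine change of coordinates bringing that chord into the standard form of the lemma). The required inequality $\i(\overline{A_5A_1})\neq\i(\overline{A_iA_{i+1}})$ for every other side of $Q$ follows from three observations: the hypothesis $\i(\overline{A_2A_3})\neq\i(\overline{A_5A_1})$ handles $\overline{A_2A_3}$; strict convexity together with Lemma~\ref{lem:translation} rules out coincidence with $\overline{A_1A_2}$, since any such coincidence would force a non-trivial half-translation fixing $A_1$, placing a marked point of cone angle $\pi$ in the interior of the pentagon; and $\i(\overline{A_3A_5})\neq \i(\overline{A_5A_1})$ because the former segment is horizontal while the latter has a non-zero vertical component, forcing different holonomies. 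The lemma then produces $A_6\in\overline{\H}_1$ together with an admissible extension to $(A_1A_2A_3A_5A_6)$, and its moreover clause upgrades to $A_6\in\H_1$ whenever $\i(\inter Q)$ is not contained in any horizontal cylinder. Finally, Lemma~\ref{lem:extension1} glues this extended quadrilateral to $T$ along the shared chord $\overline{A_3A_5}$, yielding the desired admissible hexagon $(A_1A_2A_3A_4A_5A_6)$.

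Case 2 ($d_1\leq d_2$) is symmetric and I handle it by two independent applications of Lemma~\ref{lem:emdedding:strip} to $(T,\i|_T)$ in the strip $\H_2$: once with horizontal flow through the side $\overline{A_4A_5}$ to produce $A_7\in\overline{\H}_2$ and an admissible extension to the quadrilateral $(A_3A_4A_7A_5)$, and once through $\overline{A_3A_4}$ to produce $A_8\in\overline{\H}_2$ and an admissible extension to $(A_3A_8A_4A_5)$. Two applications of Lemma~\ref{lem:extension1} along $\overline{A_3A_5}$ then produce the admissible hexagons $(A_1A_2A_3A_4A_7A_5)$ and $(A_1A_2A_3A_8A_4A_5)$.

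The main obstacle I anticipate is verifying the equal-or-disjoint compatibility hypothesis of Lemma~\ref{lem:extension1} at the re-gluing step: namely, that the two new sides produced by the horizontal extension (such as $\overline{A_5A_6}$ and $\overline{A_6A_1}$ in Case 1, or $\overline{A_4A_7}, \overline{A_7A_5}$ in Case 2) have image on $(X,\omg;\Sigma)$ either equal to or disjoint from every side of the complementary sub-polygon. The size comparisons $d_1\geq d_2$ and $d_1\leq d_2$ enter precisely here: they guarantee that the horizontal ray producing the new vertex remains within the designated strip until it meets its first marked point, so that the new vertex genuinely lies in $\overline{\H}_1$ (resp.\ $\overline{\H}_2$) and its image is geometrically separated from the image of the complementary sub-polygon sitting in the opposite strip.
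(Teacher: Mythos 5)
Your skeleton --- cut the pentagon along $\overline{A_3A_5}$, extend the piece in the wider strip by Lemma~\ref{lem:emdedding:strip}, reattach the other piece by Lemma~\ref{lem:extension1} --- is a reasonable reading of the intended argument, and your verification of the hypotheses of Lemma~\ref{lem:emdedding:strip} for the sub-polygon $(A_1A_2A_3A_5)$ (distinctness of $\i(\overline{A_5A_1})$ from the other three sides, the ordering of $y$-coordinates after a shear making $\overline{A_5A_1}$ vertical) is essentially correct. One small slip: a coincidence $\i(\overline{A_5A_1})=\i(\overline{A_1A_2})$ would force the interior angle at $A_1$ to equal $\pi$, contradicting strict convexity (the argument of Lemma~\ref{lem:triangle}); it does not produce a cone point in the interior, and Lemma~\ref{lem:translation} is not the relevant tool there.

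The genuine gap is exactly at the step you label ``the main obstacle'': you never prove the equal-or-disjoint condition of Lemma~\ref{lem:extension1} between the new sides $\overline{A_5A_6},\overline{A_6A_1}$ and the sides $\overline{A_3A_4},\overline{A_4A_5}$ of the lower triangle (nor the required distinctness of $\i(\overline{A_3A_5})$ from the new sides), and the justification you offer is not valid. That $A_6\in\overline{\H}_1$ is automatic for any $d_1,d_2$, since the extending rays are horizontal and stay at constant height in $[0,d_1]$; and planar separation of the two sub-polygons into different strips implies nothing about their images on the surface, where a horizontal ray leaving $\i(\overline{A_5A_1})$ can perfectly well re-enter $\i(\inter(A_3A_4A_5))$ before meeting a marked point. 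As written, the hypothesis $d_1\ge d_2$ is never actually used, which cannot be right given that the conclusion is asymmetric in the two strips. The missing argument is the analogue of Cases 1--2 in the proof of Lemma~\ref{lem:emdedding:strip}: if, say, $\i(\overline{A_6A_1})$ met $\i(\overline{A_3A_4})$ at interior points $S,\hat S$, transport the swept parallelogram $(A_1P_aQ_aA_5)$, whose horizontal edges lie at heights $d_1$ and $0$, by the (half-)translation sending $S$ to $\hat S$. The endpoints $A_3,A_4$ cannot lie in the interior of the transported region (else the flow would have met a marked point earlier), and $\overline{A_3A_4}$ is confined to $\overline{\H}_2$, of height $d_2\le d_1$, so it cannot cross both horizontal edges of the transported parallelogram; it is therefore forced out through the right-hand edge, which places the transported copy of $A_6$ --- a point mapping to a marked point --- in the interior of the original pentagon, contradicting admissibility of $\i$. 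This height comparison is the entire content of the case split $d_1\ge d_2$ versus $d_1\le d_2$, and the same objection applies verbatim to your Case 2.
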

\begin{proof}
The proof is similar to that of Lemma \ref{lem:emdedding:strip}.
\end{proof}

  \begin{figure}[b]
  \centering
 \begin{tikzpicture}
  \path[fill,lightgray](4.25,-7.5)--(-5,-7.5)--
   (-5,-10.5)--(4.25,-10.5)--cycle;
   \path[fill,black]
   (-3+5,-4-0.5-4.5) coordinate(c1) circle(0.05)
                node[above]{{\tiny $A_5$}}
   (-6+5,-3-4.5) coordinate(c2) circle(0.05)
                node[above]{\tiny $A_1$}
   (-9+5,-4-0.5-4.5) coordinate(c3) circle(0.05)
                node[left]{\tiny $A_2$}
   (-2.5,-5-1-4.5) coordinate(c4) circle(0.05)
                node[below]{\tiny $A_3$}
   (-4.5+5,-5-1-4.5) coordinate(c5) circle(0.05)
                node[below]{\tiny $A_4$};
  \draw[dashed]
  (4.25,-7.5)--(-5,-7.5)
   (-5,-10.5)--(4.25,-10.5);
   \draw (c1)--(c2)--(c3)--(c4)--(c5)--cycle;
   \draw (c4)--(c2)--(c5);
   \draw (-5.3+5,-4-4.5) node{\tiny$\gm_3$}
         (-6.7+5,-4-4.5) node{\tiny$\gm_1$}
         (-5.7+5,-6-4.5) node[below]{\tiny $\gm_2$}
         (3.5,-9)node{\tiny$\mathcal{M}$}
        ;
 \end{tikzpicture}
  \caption{Pentagon-extension of triangles I: the triangle $(A_1A_3A_4)$ is not contained in any cylinder.}\label{Fig:convexq}
\end{figure}
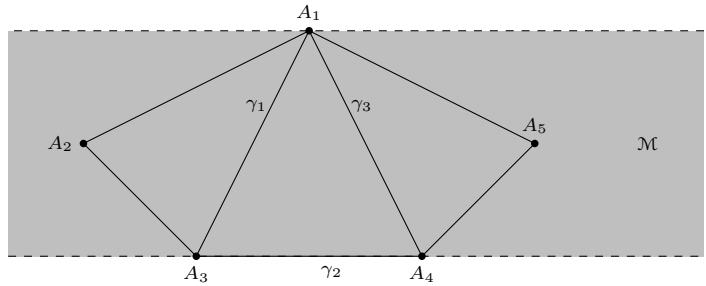

  \begin{corollary}\label{cor:pentagon:strictlyconvex}
   Let $\gm_1,\gm_2,\gm_3 $ be three saddle connections bounding a triangle $\Dt$ on $ (X,\omg;\Sigma)$ which is not contained in any  cylinder. Then there exist a convex pentagon $\mathcal P=(P_1P_2P_3P_4P_5)\subset\R^2$ which is strictly convex at each vertex, and an admissible map  $\i:\mathcal{P}\to (X,\omg;\Sigma)$ (see Figure \ref{Fig:convexq}), such that
  $$\i(\overline{P_1P_3})=\gm_1,~\i(\overline{P_1P_4})=\gm_2,~ \i(\overline{P_3P_4})=\gm_3.$$
\end{corollary}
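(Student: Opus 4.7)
The strategy is to construct $\mathcal{P}$ by admissibly extending $\Delta$ along its two edges $\gm_1=\overline{P_1P_3}$ and $\gm_2=\overline{P_1P_4}$ adjacent to $P_1$, using Lemma~\ref{lem:emdedding:strip} twice. I first admissibly embed $\Delta$ in $\R^2$ as a triangle $T=(P_1P_3P_4)$ so that the three edges correspond to $\gm_1,\gm_2,\gm_3$. After rotating and reflecting $\R^2$ to position $\overline{P_1P_3}$ along the $y$-axis, with $P_1$ on top and $P_4$ on the left, Lemma~\ref{lem:emdedding:strip} produces a new vertex $P_2$ on the right of $\overline{P_1P_3}$, admissibly extending $T$ to a quadrilateral. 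The hypothesis that $\Delta$ is not contained in any cylinder ensures, via the second assertion of Lemma~\ref{lem:emdedding:strip}, that $P_2$ lies strictly inside the open strip. An analogous procedure applied to the edge $\overline{P_1P_4}$ (after a suitable rotation and reflection) produces a new vertex $P_5$ strictly inside the corresponding open strip.

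To combine the two extensions into a single admissible map on the pentagon $(P_1P_2P_3P_4P_5)$, I invoke Lemma~\ref{lem:extension1}: the diagonal $\overline{P_1P_4}$ splits the pentagon into the quadrilateral $(P_1P_2P_3P_4)$ coming from the first extension and the triangle $(P_1P_4P_5)$ extracted from the second extension. The disjointness conditions of Lemma~\ref{lem:extension1} on the saddle connections arising from the two pieces should follow from the admissibility built into each individual extension.

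The main obstacle is verifying strict convexity of $\mathcal{P}$ at every vertex, especially at $P_1$. Strict interiority of $P_2$ and $P_5$ in their open strips yields interior angles strictly less than $\pi$ at those vertices, and the extension edges incident to $P_3, P_4$ lie in the half-planes opposite to the rest of the triangle, so the interior angles at $P_3, P_4$ also stay below $\pi$. At $P_1$ the interior angle equals $\angle P_3P_1P_4+\angle P_3P_1P_2+\angle P_4P_1P_5$, where each of the two extension angles is bounded by $\pi/2$; the delicate step is to bound the total strictly below $\pi$. I expect this to be the hardest part of the argument, requiring a careful use of the strict interiority of $P_2, P_5$ in their strips and, if needed, a tailored choice of orientations (or of the strip widths) when the interior angle of $\Delta$ at $P_1$ is unusually large.
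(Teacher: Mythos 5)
Your construction has a genuine gap exactly where you flag it: strict convexity at $P_1$. With your choice of strips --- one adapted to $\overline{P_1P_3}$ and a second, independent one adapted to $\overline{P_1P_4}$ --- the only control you get on the new vertices is that each extension angle $\angle P_3P_1P_2$ and $\angle P_4P_1P_5$ is less than $\pi/2$. Since $\angle P_3P_1P_4$ can be arbitrarily close to $\pi$ and the extension angles depend on where the rays from $\i(\overline{P_1P_3})$ and $\i(\overline{P_1P_4})$ first meet a marked point (over which you have no control), the sum can exceed $\pi$, and no choice of ``orientations or strip widths'' repairs this for the two separate strips you chose. The paper's fix is to use a \emph{single} strip for both extensions: normalize the edge $\overline{P_3P_4}$ opposite $P_1$ to be horizontal and take $\mathcal{M}$ to be the horizontal strip with $P_3,P_4$ on one boundary line and $P_1$ on the other (after a horizontal shear, each of $\overline{P_1P_3}$ and $\overline{P_1P_4}$ spans $\mathcal M$, so Lemma \ref{lem:emdedding:strip} applies to both, and ``not contained in a horizontal cylinder'' follows from the hypothesis on $\Dt$). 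Then $P_2$ and $P_5$ both land in $\inter(\mathcal M)$, so all four other vertices lie strictly below the boundary line of $\mathcal M$ through $P_1$, and the five points are in convex position in the asserted cyclic order; this is what forces the interior angle at $P_1$ (and at every other vertex) to be strictly less than $\pi$.

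A secondary problem is your gluing step. You build the quadrilateral and the triangle $(P_1P_4P_5)$ by two independent extensions of $\Dt$ and then invoke Lemma \ref{lem:extension1}; but its third hypothesis requires each of $\i(\overline{P_1P_2}),\i(\overline{P_2P_3})$ to be equal to or disjoint from each of $\tilde\i(\overline{P_4P_5}),\tilde\i(\overline{P_5P_1})$, and these four saddle connections come from unrelated constructions, so nothing you have said establishes this. The paper avoids the issue by extending sequentially: first extend the triangle across $\overline{P_1P_3}$ to a quadrilateral via Lemma \ref{lem:emdedding:strip}, then extend that quadrilateral across $\overline{P_1P_4}$ to the pentagon, so all disjointness checks are internal to Lemma \ref{lem:emdedding:strip}. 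Restructuring your argument this way, with the single strip in the direction of $\overline{P_3P_4}$, closes both gaps.
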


\begin{proof}
   Suppose that $\gm_1,\gm_2,\gm_3$ are arranged in the counterclockwise order with respect to $\Dt$.
  After applying the $\GL(2,R)$ action, we may assume that $\gm_2$ is horizontal.  Let $(A_1A_3A_4)$ be a triangle in $\R^2$  and $\i: (A_1A_3A_4)\to (X,\omg;\Sigma)$  an admissible map such that $\i(\overline{A_1A_3})=\gm_1$, $\i(\overline{A_3A_4})=\gm_2$ and $\i(\overline{A_4A_1})=\gm_3$.

 Let $\mathcal{M}\subset\R^2$ be the horizontal strip which contains $A_1, ~A_3,~A_4$ in the boundary.  Since $\Delta$ is not contained in any cylinder, by Lemma \ref{lem:emdedding:strip}, there exists $A_2\in \mathbf{int}(\mathcal{M})$ on the left side of $\overline{A_1A_3}$
  such that $\i$ can be admissibly extended to $(A_1A_2A_3A_4)\to(X,\omg;\Sigma)$.  Consider the quadrilateral $(A_1A_2A_3A_4)$. It is not contained in any cylinder.  Again, it follows from Lemma \ref{lem:emdedding:strip} that there exists $A_5\in\mathbf{int}(\mathcal{M})$ on the right side of $\overline{A_1A_4}$ such that
  $\i$ can be admissibly extended to~　$\i:(A_1A_2A_3A_4A_5)\to(X,\omg;\Sigma)$. Since  $A_1, ~A_3,~A_4$ belong to the boundary of $\mathcal{M}$ while $A_2$ and $A_5$ belong to interior, we see that $(A_1A_2A_3A_4A_5)$ is strictly convex at each vertex.
\end{proof}

 \begin{lemma}\label{lem:pentagon:cylinder}
  Let $\gm_1,\gm_2,\gm_3 $ be three saddle connections bounding a triangle $\Dt$ on $ (X,\omg;\Sigma)$ which is  contained in a non-simple  cylinder $\mathcal{C}$. Suppose that  $\gm_3$ is contained in the boundary of $\C$. Then there exist a convex pentagon $(A_1A_2A_3A_4A_5)\subset\R^2$  and an admissible map  $\i:(A_1A_2A_3A_4A_5)\to (X,\omg;\Sigma)$, such that
  \begin{itemize}
    \item $\i(\overline{A_1A_3})=\gm_1,~\i(\overline{A_3A_4})=\gm_2,~ \i(\overline{A_1A_4})=\gm_3$;
    \item the interior angle  at $A_1$ is $\pi$ while the other interior angles are less than $\pi$ (see Figure \ref{Fig:pentagon}(a)), or the interior angle of at $A_4$ is $\pi$ while the other interior angles are less than $\pi$ (see Figure \ref{Fig:pentagon}(b)).
  \end{itemize}
\end{lemma}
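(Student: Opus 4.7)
After applying the $\GL(2,\R)$ action I normalize so that $\mathcal{C}$ is horizontal; without loss of generality $\gm_3\subset\partial_-\mathcal{C}$. I develop the cylinder as the closed horizontal strip $\overline{\mathcal{H}_1}$ with $\mathcal{H}_1=\{0<y<d\}$, where $d$ is the height of $\mathcal{C}$, placing $A_1=(0,0)$ and $A_4=(L,0)$ with $L=|\gm_3|$. Since the interior of a cylinder contains no marked points, the apex $A_3$ of $\Dt$ must lie on $\partial_+\mathcal{C}=\{y=d\}$; in particular $\Dt\subset\overline{\mathcal{H}_1}$ with $\overline{A_1A_4}\subset\partial_-\mathcal{C}$. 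This is the analog of the setup in Corollary~\ref{cor:pentagon:strictlyconvex}, with the crucial difference that $\Dt$ is now confined to a cylinder.

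The construction proceeds in two steps, paralleling the proof of Corollary~\ref{cor:pentagon:strictlyconvex}. First, I produce $A_2$ on the opposite side of $\gm_1=\overline{A_1A_3}$ from $A_4$ by an adaptation of Lemma~\ref{lem:emdedding:strip} carried out inside $\overline{\mathcal{H}_1}$: I flow transversally to $\gm_1$ away from $A_4$ and let $A_2$ be the preimage of the first singularity reached; this yields an admissible quadrilateral $(A_1A_2A_3A_4)$. Second, I produce $A_5$ on the opposite side of $\gm_3$ from $A_3$ by flowing downward from the interior of $\gm_3$ into the flat region adjacent to $\partial_-\mathcal{C}$ on the side opposite to $\mathcal{C}$. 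The analog of Lemma~\ref{lem:emdedding:strip} then locates $A_5$ at some depth $b>0$ below a point of $\overline{A_1A_4}$.

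The heart of the argument is to verify that $A_5$ necessarily lies on the ray from $A_2$ through $A_1$ past $A_1$ (giving interior angle $\pi$ at $A_1$) or on the ray from $A_3$ through $A_4$ past $A_4$ (giving interior angle $\pi$ at $A_4$), rather than producing the strictly convex pentagon of Corollary~\ref{cor:pentagon:strictlyconvex}. This is where the non-simplicity of $\mathcal{C}$ enters: the existence of an additional saddle connection of $\partial\mathcal{C}$ adjacent to $\gm_3$ at $A_1$ or $A_4$, combined with the cone-angle structure at that endpoint outside $\mathcal{C}$, forces the downward extension of $\gm_3$ to align with the straight-line continuation of $\overline{A_1A_2}$ past $A_1$ or of $\overline{A_3A_4}$ past $A_4$. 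Once $A_5$ is so located, admissibility of the full pentagon map follows from Lemma~\ref{lem:extension1} by checking the equal-or-disjoint condition on pairs of sides, and strict convexity at the remaining three vertices follows from the construction in Step~1 together with $A_5$ being obtained by a genuine flow of positive depth.

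The main obstacle is precisely this structural step at $A_1$ and $A_4$: pinning down which of the two collinearity alternatives occurs, and checking that it in fact does occur. It requires a careful local analysis of the cone angles at $A_1, A_4$ and of the saddle connections on $\partial\mathcal{C}$ adjacent to $\gm_3$, which is exactly the place where the cylinder hypothesis is used in an essential way and where the proof diverges from the strictly convex case.
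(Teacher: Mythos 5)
There is a genuine gap, and it sits exactly where you flagged it: the claim that the vertex $A_5$ obtained by flowing transversally \emph{out of} $\mathcal{C}$ across a boundary saddle connection must land on the line through $A_2$ and $A_1$, or on the line through $A_3$ and $A_4$, is asserted as the ``heart of the argument'' and never proved. It is also not true in general: the first marked point reached by flowing out of the cylinder is determined by the flat geometry on the far side of $\partial\mathcal{C}$ and has no forced alignment with $A_2$ or $A_3$; generically the new vertex produces a reflex angle at an endpoint of the boundary side, so the resulting pentagon is not even convex. This is precisely why the triangle-in-a-cylinder case cannot be handled as in Corollary \ref{cor:pentagon:strictlyconvex}: the ``moreover'' clause of Lemma \ref{lem:emdedding:strip}, which places the new vertex in the open strip and yields strict convexity, is exactly the clause that fails when the image is contained in a cylinder. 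A proof that defers its central step to ``a careful local analysis'' that is not carried out is not a proof, and here the deferred step is false as stated.

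The paper's argument never crosses $\partial\mathcal{C}$ and obtains the angle $\pi$ for free. Develop all of $\overline{\mathcal{C}}$ as a parallelogram $(Q_1Q_2Q_3Q_4)$ whose two horizontal sides carry extra vertices $M_1,\dots,M_n$ and $N_1,\dots,N_k$ over the marked points of the two boundary components; non-simplicity means $\max\{n,k\}\geq 1$. The pentagon is taken to be the convex hull of five of these boundary vertices, chosen (after picking the cut of the cylinder suitably) so that three of them are consecutive marked points on one horizontal side: either the apex of the triangle is an interior vertex $M_1$ of its side, flanked by its two neighbours along that side (Figure \ref{Fig:pentagon}(a)), or the side of the triangle lying on $\partial\mathcal{C}$ is prolonged past $N_1$ to the next marked point on the same boundary component (Figure \ref{Fig:pentagon}(b)). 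The angle $\pi$ is the straight angle at the middle of three collinear points on a boundary line of the cylinder, and admissibility is inherited by restricting the admissible map of the developed cylinder to this sub-pentagon. The collinearity you need can only come from points lying on a common boundary circle of $\mathcal{C}$, not from a transverse flow out of it; Step 2 of your construction should be replaced by this mechanism. (A side remark: your placement of $\gamma_3=\overline{A_1A_4}$ on $\partial_-\mathcal{C}$ follows the lemma's first bullet literally, but the proof and Figure \ref{Fig:pentagon} have the roles of $\gamma_2$ and $\gamma_3$ interchanged, with the boundary side being $\overline{A_3A_4}$; this mislabelling in the statement is not the source of the gap, but it explains why your configuration does not match the figures.)
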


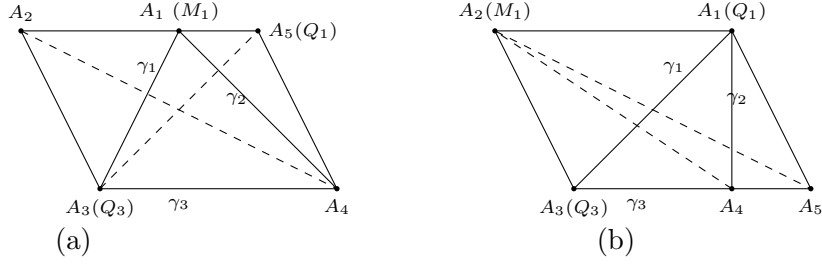
\begin{figure}[t]
  \begin{tikzpicture}[scale=0.7]


       \path[fill,black]
   (3+2-9,-3) coordinate(c2) circle(0.05) node[above]{\tiny $A_2$}
   (6+2-9,-3) coordinate(c1) circle(0.05)node[above]{\tiny $A_1$ ($M_1$)}
   (9+2-9,-6) coordinate(c4) circle(0.05)node[below]{\tiny $A_4$}
   (7.5+2-9,-3) coordinate(c5) circle(0.05)node[right]{\tiny $A_5$($Q_1$)}
   (4.5+2-9,-5-1) coordinate(c3) circle(0.05)node[below]{\tiny $A_3$($Q_3$)};
   \draw (c1)--(c2)--(c3)--(c4)--(c5)--cycle;
   \draw (c3)--(c1)--(c4);
   \draw[dashed](c2)--(c4) (c3)--(c5);
   \draw (5.4+2-9,-3.7) node{\tiny$\gm_1$}
         (7.1+2-9,-4.3) node{\tiny$\gm_2$}
         (6+2-9,-6) node[below]{\tiny $\gm_3$}
          (-3,-7)node{(a)};


       \path[fill,black]
   (3+2,-3) coordinate(c1) circle(0.05) node[above]{\tiny $A_2$($M_1$)}
   (7.5+2,-3) coordinate(c2) circle(0.05)node[above]{\tiny $A_1$($Q_1$)}
   (9+2,-6) coordinate(c3) circle(0.05)node[below]{\tiny $A_5$}
   (7.5+2,-5-1) coordinate(c4) circle(0.05)node[below]{\tiny $A_4$}
   (4.5+2,-5-1) coordinate(c5) circle(0.05)node[below]{\tiny $A_3$($Q_3$)};

   \draw (c1)--(c2)--(c3)--(c4)--(c5)--cycle;
   \draw (c4)--(c2)--(c5);
   \draw[dashed](c4)--(c1)--(c3);
   \draw (5.4+3,-3.7) node{\tiny$\gm_1$}
         (6.8+2.8,-4.3) node{\tiny$\gm_2$}
         (5.7+2,-6) node[below]{\tiny $\gm_3$}
          (-0.7+2+6,-7)node{(b)};
  \end{tikzpicture}
  \caption{Pentagon-extension of triangles II: the triangle $(A_1A_3A_4)$ is contained in some non-simple cylinder.}\label{Fig:pentagon}
\end{figure}

\begin{proof}
There exist a parallelogram $(Q_1Q_2Q_3Q_4)$ on $\R^2$, and an admissible map
 $ \mathcal{J}: (Q_1M_1\cdots M_n Q_2 Q_3 N_1\cdots N_k Q_4)\to (X,\omg;\Sigma)$ whose image is $\mathcal{C}$, where $M_1,\cdots,M_n \in \overline{Q_1Q_2}$ and $N_1,\cdots,N_k\in\overline{Q_3Q_4}$.   By assumption, $\mathcal{C}$ is not simple. It follows  that $\max\{n,k\}\geq1$.
 \begin{itemize}
    \item If  $n\geq1$, we may choose the polygon $(Q_1M_1\cdots M_n Q_2 Q_3 N_1\cdots N_k Q_4)$ such that
         $ \i(\overline{M_1Q_3})=\gm_1 $. Let
         $A_1=M_1$, $A_3=Q_3$,  $A_5=Q_1$, $A_2=M_2$ if $n>1$ or $A_2=Q_2$ if $n=1$, and $A_4=Q_4$ if $k=0$ or $A_4=N_1$ if $k\geq1$.
   \item If $k\geq1$, we may choose the polygon $(Q_1M_1\cdots M_n Q_2 Q_3 N_1\cdots N_k Q_4)$ such that $\i(\overline{Q_1Q_3})=\gm_1$. Let  $A_1=Q_1$, $A_3=Q_3$. $A_4=N_1$, $A_2=Q_2$ if $n=0$ or $A_2=M_1$ if $n\geq1$, and  $A_5=N_2$ if $k>1$ or $A_5=Q_4$ if $k=1$.
 \end{itemize}
 Then the resulting pentagon $(A_1A_2A_3A_4A_5)$ and the associate admissible map $\mathcal{J}:(A_1A_2A_3A_4A_5)\to(X,\omg;\Sigma)$ satisfy all the properties we want.
\end{proof}

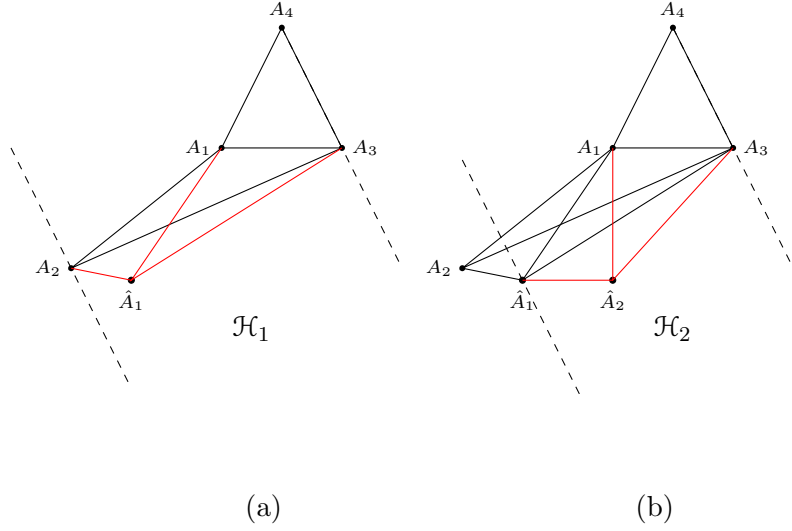
\begin{figure}
 \begin{tikzpicture}[scale=0.8]
      \path[fill,black]
    (1.5,0) coordinate (a1) circle (0.05)node[above]{\tiny$A_4$}
    (0.5,-2) coordinate (a4) circle (0.05)node[left]{\tiny$A_1$}
    (2.5,-2) coordinate (a2) circle (0.05)node[right]{\tiny$A_3$}
    (-2,-4) coordinate (a3) circle (0.05)node[left]{\tiny$A_2$};

    \draw[dashed](a1)--(3.5,-4)
    (-3,-2)--(-1,-6);
       \draw[fill,black] (-1,-4.2)node[below]{\tiny$\hat{A}_1$} circle (0.05);
    \draw(a1)--(a2)--(a3)--(a4)--cycle
         (a2)--(a4);
    \draw[ red](a4)--(-1,-4.2)--(a2) (a3)--(-1,-4.2) ;
    \draw(1.2,-8)node{(a)}
    (1,-5)node{$\H_1$};


   \path[fill,black]
    (1.5+6.5,0) coordinate (a1) circle (0.05)node[above]{\tiny$A_4$}
    (0.5+6.5,-2) coordinate (a4) circle (0.05)node[left]{\tiny$A_1$}
    (2.5+6.5,-2) coordinate (a2) circle (0.05)node[right]{\tiny$A_3$}
    (-2+6.5,-4) coordinate (a3) circle (0.05)node[left]{\tiny$A_2$};

    \draw[dashed](a1)--(3.5+6.5,-4)
    (-2+6.5,-2.2)--(+6.5,-6.2);
       \draw[fill,black] (-1+6.5,-4.2)node[below]{\tiny$\hat{A}_1$} circle (0.05);
       \draw[fill,black] (0.5+6.5,-4.2)node[below]{\tiny$\hat{A}_2$} circle (0.05);
    \draw(a1)--(a2)--(a3)--(a4)--cycle
         (a2)--(a4);
    \draw(a4)--(-1+6.5,-4.2)--(a2) (a3)--(-1+6.5,-4.2) ;
    \draw[red](-1+6.5,-4.2)--(0.5+6.5,-4.2)--(a2) (0.5+6.5,-4.2)--(a4);
    \draw(1.2+6.5,-8)node{(b)}
    (1.5+6.5,-5)node{$\H_2$};
 \end{tikzpicture}
  \caption{\small{Constructing a strictly convex admissible quadrilateral $(A_1\hat{A}_mA_3A_4)$ from an arbitrary admissible quadrilateral $(A_1A_2A_3A_4)$}.}\label{fig:coconvex}
\end{figure}

\begin{lemma}\label{lem:coconvex}
Let  $\i: (A_1A_2A_3A_4)\to (X,\omg;\Sigma)$ be  an admissible map, where  $(A_1A_2A_3A_4)\subset\R^2$  is   strictly convex at $A_2,A_3,A_4$. 
 If $(A_1A_2A_3A_4)$ is not strictly convex at $A_1$, then  there exists a sequence of  admissible maps $\hat{\i}_k:(A_1\hat{A}_{k-1}\hat{A}_k A_3A_4)\to (X,\omg;\Sigma)$, where $k=1,2,\cdots,m$ and $\hat{A}_0=A_2$,  such that
   \begin{enumerate}[(i)]
     \item $\i$ and $\hat{\i}_k$ coincide on the triangle $(A_1A_3A_4)$;
     \item $(A_1\hat{A}_{k-1}\hat{A}_kA_3)$ is strictly convex at each vertex for $k=1,2,\cdots,m$;
     \item $(A_1\hat{A}_m A_3A_4)$ is  strictly convex at each vertex.
   \end{enumerate}
\end{lemma}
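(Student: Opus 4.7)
The plan is to construct $\hat A_1,\ldots,\hat A_m$ inductively by iterated strip extension, gluing each new piece to the fixed triangle $(A_1A_3A_4)$ via Lemma~\ref{lem:extension1}, and to argue termination via the discreteness of holonomies.

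After applying a suitable element of $\GL(2,\R)$, I normalize the configuration so that the diagonal $\overline{A_1A_3}$ is vertical, with the triangle $(A_1A_3A_4)$ on its right and $A_2$ on its left. The failure of strict convexity at $A_1$ then places the ray from $A_1$ through $A_2$ in the reflex sector opposite to the ray from $A_1$ through $A_4$. Set $\hat A_0 := A_2$. Inductively, suppose I have an admissible quadrilateral $(A_1\hat A_{k-1}A_3A_4)$ that extends $\i|_{(A_1A_3A_4)}$ but still fails to be strictly convex at $A_1$. I focus on the sub-triangle $(A_1\hat A_{k-1}A_3)$ lying to the left of $\overline{A_1A_3}$, rotate so that $\overline{\hat A_{k-1}A_3}$ becomes the vertical boundary edge of a horizontal strip, and apply Lemma~\ref{lem:emdedding:strip} to extend on the side of $\overline{\hat A_{k-1}A_3}$ opposite to $A_1$; if $\i(\inter(A_1\hat A_{k-1}A_3))$ is trapped in a horizontal cylinder I use Lemma~\ref{lem:twostrips} instead. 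This produces a new vertex $\hat A_k$ lying in the open strip together with an admissible map on $(A_1\hat A_{k-1}\hat A_kA_3)$ whose four interior angles are each strictly less than $\pi$, giving property~(ii). Gluing this along the shared edge $\overline{A_1A_3}$ to the fixed triangle $(A_1A_3A_4)$ via Lemma~\ref{lem:extension1} -- whose compatibility hypothesis on boundary saddle connections is readily checked from the admissibility of each constituent polygon together with the distinctness of $\i(\overline{A_1A_3})$ from neighbouring boundary images -- yields $\hat{\i}_k$ on the pentagon $(A_1\hat A_{k-1}\hat A_kA_3A_4)$, securing property~(i).

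The main obstacle is proving termination. As $k$ increases, the direction of the saddle connection $\i(\overline{A_1\hat A_k})$ rotates monotonically from that of $\i(\overline{A_1A_2})$ toward that of $\i(\overline{A_1A_3})$; once it crosses the direction of $\i(\overline{A_1A_4})$, the interior angle at $A_1$ in $(A_1\hat A_kA_3A_4)$ drops below $\pi$, while a parallel monotonicity consideration at $A_3$ controls the angle there and yields strict convexity at every vertex in condition~(iii). If termination failed, the infinite sequence $\{\i(\overline{A_1\hat A_k})\}_{k\geq 1}$ of pairwise distinct saddle connections would have holonomies accumulating in a bounded region of $\R^2/\{\pm 1\}$, contradicting the discreteness of the holonomy set. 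Establishing the uniform bound on the $\hat A_k$ from the controlled geometry of the successive strip extensions -- so as to rule out runaway length -- is the technical heart of this step.
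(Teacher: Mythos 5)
Your construction of the points $\hat A_k$ is essentially the paper's: iterated admissible extension across $\overline{\hat A_{k-1}A_3}$ into a strip via Lemma \ref{lem:emdedding:strip}, glued back to the fixed triangle $(A_1A_3A_4)$ along $\overline{A_1A_3}$ via Lemma \ref{lem:extension1}. (A side remark: the cylinder alternative you hedge against cannot occur, since failure of strict convexity at $A_1$ places $A_1$ in the interior of the strip, so the image is not contained in a cylinder in the strip direction.) The genuine gap is exactly where you flag it: termination. You propose to contradict discreteness of holonomies using the sequence $\i(\overline{A_1\hat A_k})$, but you never establish that these holonomies lie in a bounded set, and the quantity you chose to monitor makes that bound problematic. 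The only a priori estimate available is $\mathbf{Area}(A_1\hat A_kA_3)\le\mathbf{Area}(X,\omg)$, since each $\hat\i_k$ is an embedding on the interior; writing $\mathbf{Area}(A_1\hat A_kA_3)=\tfrac12\,|\overline{A_1A_3}|\,|\overline{A_1\hat A_k}|\sin(\angle A_3A_1\hat A_k)$ shows that to control $|\overline{A_1\hat A_k}|$ you need $\sin(\angle A_3A_1\hat A_k)$ bounded away from $0$. But by your own monotonicity observation this angle is strictly decreasing (the direction of $\overline{A_1\hat A_k}$ rotates toward that of $\overline{A_1A_3}$), and you have no positive lower bound for it, so the area inequality gives no control on $|\overline{A_1\hat A_k}|$.

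The paper closes this by monitoring the angle at $A_3$ instead. After normalizing $(A_1A_3A_4)$ to be equilateral, the interior angle $\hat\theta_{k3}$ of $(A_1\hat A_{k-1}\hat A_kA_3)$ at $A_3$ is strictly increasing in $k$ and sandwiched in the interval $(\hat\theta_{03},\,2\pi/3)$, the upper bound being the exterior angle of the equilateral triangle at $A_3$ (which must not be exceeded if the glued pentagon is to remain admissible). Hence $\sin\hat\theta_{k3}\ge\min\{\sin\hat\theta_{03},\sqrt{3}/2\}>0$, and the same area inequality, now read off against the side $\overline{A_3\hat A_k}$, yields a uniform bound $|\overline{A_3\hat A_k}|\le\mathbf{T}$. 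Strict monotonicity of $\hat\theta_{k3}$ makes the saddle connections $\i(\overline{A_3\hat A_k})$ pairwise distinct, and finiteness of the set of saddle connections of length at most $\mathbf{T}$ forces the algorithm to stop. To complete your argument you should either switch your bookkeeping to this $A_3$-side estimate or supply an independent proof that the holonomies of $\i(\overline{A_1\hat A_k})$ remain bounded; as written the proof is incomplete.
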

\begin{proof}
   After apply an action of $\GL(2,\R)$, we may assume that $(A_1A_3A_4)$ is an equilateral triangle (see Figure \ref{fig:coconvex}(a)).
  We shall proceed by induction. The algorithm is as following.

 \begin{quotation}
       Consider the strip $\H_1$ directed by $\overrightarrow{A_4A_3}$  which contains $A_2$, $A_3$, $A_4$ in the boundary (see Figure \ref{fig:coconvex}(a)). Since $(A_1A_2A_3A_4)$ is not strictly convex at $A_1$, it follows that $A_1\in\mathbf{int}(\H_1)$ and that  $\i(\inter(A_1A_2A_3A_4))$ by $\i$ is not contained in any cylinder in the direction of $\overrightarrow{A_4A_3}$. By Lemma \ref{lem:emdedding:strip}, there exist $\hat{A}_1\in\mathbf{int}(\H_1)$ below  $\overline{A_2A_3}$ and an  admissible
       $\hat{\i}_1:(A_1A_2\hat{A}_1A_3A_4)\to (X,\omg;\Sigma) $  which coincides with $\i$ on $(A_1A_3A_4)$, such that $(A_1\hat{A}_0\hat{A}_1A_3)$ is strictly convex at each vertex, where $\hat{A}_0=A_2$.
     Let  $\hat{\theta}_{13}$ be the interior angle of $(A_1\hat{A}_0\hat{A}_1A_3)$ at $A_3$. Then
       $$ \hat{\theta}_{03} <\hat{\theta}_{13}<2\pi/3,$$
       where $\hat{\theta}_{03}$ is the interior angle of $(A_1A_2A_3)$ at $A_3$ and $2\pi/3$ is the exterior angle of $(A_1A_3A_4)$ at $A_3$.

     If $(A_1\hat{A}_1A_3A_4)$ is strictly convex at $\hat{A}_1$, the algorithm terminates. Otherwise, we repeat the construction above for $(A_1\hat{A}_1A_3A_4)$.
  \end{quotation}

  We now show that the algorithm will terminate after finitely many steps.  Suppose to the contrary that the algorithm will never terminate.
  In each step, we construct an admissible map $\hat{\i}_k:(A_1\hat{A}_{k-1}\hat{A}_k A_3A_4)\to (X,\omg;\Sigma)$, where $k=1,2,\cdots,m$ and $\hat{A}_0=A_2$,  such that
   \begin{enumerate}[(i)]
     \item $\i$ and $\hat{\i}_k$ coincide on the triangle $(A_1A_3A_4)$;
     \item $(A_1\hat{A}_{k-1}\hat{A}_kA_3)$ is strictly convex at each vertex for $k=1,2,\cdots,m$;
     \item $(A_1\hat{A}_m A_3A_4)$ is not strictly convex at $A_1$.
   \end{enumerate}
  Let  $\hat{\theta}_{k3}$ be the interior angle of $(A_1\hat{A}_{k-1}\hat{A}_kA_3)$ at $A_3$. Then  $\hat{\theta}_{03}<\hat{\theta}_{k3}<\hat{\theta}_{k+1,3}<2\pi/3$ for all $k\geq1$. In particular, $\i(\overline{A_3\hat{A}_k})\neq \i(\overline{A_3\hat{A}_j}) $  for all $k>j>1$.
     Since  $\i:(A_1\hat{A}_{k-1}\hat{A}_kA_3)\to (X,\omg;\Sigma) $ is an admissible map, we have
     $$ \mathbf{Area}(A_1\hat{A}_k A_3)=\frac{1}{2}|\overline{A_1A_3}|\cdot |\overline{A_3\hat{A}_k}|\sin \hat{\theta}_{k3}<\mathbf{Area}(X,\omg), $$
     which implies that
     $$|\overline{A_3\hat{A}_k}|<{2\mathbf{Area}(X,\omg)}
     |\overline{A_1A_3}|^{-1}(\min\{|\sin \hat{\theta}_{03}|,\sqrt{3}/2\})^{-1}.$$
     Let $\mathbf{T}:={2\mathbf{Area}(X,\omg)}
     |\overline{A_1A_4}|^{-1}(\min\{|\sin\hat{\theta}_{03}|,\sqrt{3}/2\})^{-1}.$
     On the other hand, there are only finitely many saddle connections on $(X,\omg;\Sigma)$ whose length are less than $\mathbf{T}$. This is a contradiction which proves the lemma.

\end{proof}

\section{Simple cylinder preserving}
 In the remaining of this paper, we assume that  $(X,\omg;\Sigma)$ and $(X',\omg';\Sigma')$ are  half-translation surfaces with marked points, and that $$F:\ms(X,\omg;\Sigma)\to\ms(X',\omg';\Sigma')$$
 is an isomorphism.
The goal of this section is to prove that $F$ preserves simple cylinders (see Proposition \ref{lem:simplecylinder}).
Let us start  with the following two lemmas, which will be frequently used in the sequel.

\begin{lemma}[Triangle lemma]\label{lem:triangle}
  Let $\i:(A_1A_2A_3)\to(X,\omg;\Sigma)$ be an admissible map, where $(A_1A_2A_3)$ is a triangle. Then $\i(\overline{A_1A_2})$, $\i(\overline{A_2A_3})$ and $\i(\overline{A_3A_1})$ are pairwise different.
\end{lemma}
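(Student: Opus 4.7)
The plan is to argue by contradiction. Since any two distinct sides of a triangle share a vertex, we may assume without loss of generality that $\i(\overline{A_1A_2}) = \i(\overline{A_2A_3}) = \gm$ for some saddle connection $\gm$. Because admissibility makes $\i$ an isometry on each side, both sides have length $|\gm|$, and the midpoints $M_1 \in \overline{A_1A_2}$ and $M_2 \in \overline{A_2A_3}$ each map to the midpoint $m$ of $\gm$ (compare arc-length parametrizations starting from the shared endpoint $A_2$). Since the interior of any saddle connection avoids $\Sg$, the point $m$ is a regular flat point of $(X,\omg)$, so it admits a genuine flat chart $\psi$ in the half-translation atlas.

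Let $\vec v$ be a unit tangent vector to $\psi(\gm)$ at $\psi(m)$. Near $M_1$, the composition $\psi \circ \i$ is a local half-translation, so its derivative is $\pm I$. This derivative sends the unit tangent $\vec{t}_1 := (A_1 - A_2)/|A_1A_2|$ of $\overline{A_1A_2}$ at $M_1$ to $\pm \vec{t}_1$, which must coincide with a tangent of $\psi(\gm)$ at $\psi(m)$ and hence equal $\pm \vec v$. Thus $\vec{t}_1 = \pm \vec v$. The identical computation at $M_2$ yields $\vec{t}_2 := (A_3 - A_2)/|A_2A_3| = \pm \vec v$. Therefore $\vec{t}_1$ and $\vec{t}_2$ are parallel in the ambient $\R^2$, which forces $A_1$, $A_2$, $A_3$ to be collinear, contradicting the fact that $(A_1A_2A_3)$ is a non-degenerate triangle.

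The one point to handle with care is that the signs of the derivative of $\psi \circ \i$ near $M_1$ and near $M_2$ may very well be opposite: this just reflects the fact that the two local sheets of the triangle can abut $\gm$ from different sides in $X$. However, the conclusion \emph{parallel to $\vec v$} is insensitive to these sign ambiguities, so no case split is required on whether $\gm$ is a loop or on the cone structure at the images $\i(A_1), \i(A_2), \i(A_3)$ of the vertices. Working at the midpoints rather than at the shared vertex $A_2$ is precisely what lets us bypass the cone-point analysis: at $m$ we have a flat chart, and directions are honest elements of $\R^2$, whereas at $\i(A_2)$ the ``directions'' live on a cone circle whose length could in principle accommodate angular constraints between $\vec t_1$ and $\vec t_2$.
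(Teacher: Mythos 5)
Your proof is correct and takes essentially the same route as the paper's: both argue by contradiction that the local half-translation structure (derivative $\pm I$) forces two sides sharing a vertex to be parallel in $\R^2$, which degenerates the triangle. The paper compresses this into one line (the interior angle at the shared vertex would be $\pi$); your midpoint device is simply a careful justification of that same parallelism that sidesteps the cone point at the shared vertex.
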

\begin{proof}
  Suppose to the contrary that $\i(\overline{A_{i-1}A_i})=\i(\overline{A_iA_{i+1}})$ for some $i\in\{1,2,3\}$. Since $(X,\omg;\Sigma)$ is a half-translation surface, it then follows that the interior angle of $(A_1A_2A_3)$ at $A_i$ is $\pi$. This is a contradiction which proves the lemma.
\end{proof}

\begin{lemma}[Quadrilateral lemma ]\label{lem:quadri:diagonals}
   Let $\i:(A_1A_2A_3A_4)\to (X,\omg;\Sigma)$ be an admissible map, where $(A_1A_2A_3A_4)$  is a strictly convex quadrilateral. Then there exist  a  strictly convex quadrilateral $ (A'_1A_2'A_3'A_4')$ , and  an admissible map $$\i': (A'_1A_2'A_3'A_4')\to(X',\omg';\Sigma')$$ such that
   \begin{enumerate}[(i)]
     \item  $F\circ\i(\overline{A_1A_3})=\i'(\overline{A_1'A_3'})$ and
     $F\circ\i(\overline{A_2A_4})=\i'(\overline{A_2'A_4'})$;
     \item for any triangulation $\Gm$ of $(X,\omg;\Sigma)$ which contains $\i(\overline{A_1A_2})$, $\i(\overline{A_2A_3})$, $\i(\overline{A_3A_4})$, $\i(\overline{A_4A_1})$, $ F(\Gm)$ is a triangulation of $(X',\omg';\Sigma')$ which contains
     $ \i'(\overline{A'_1A'_2})$, $\i'(\overline{A'_2A'_3})$, $\i'(\overline{A'_3A'_4})$,
     $\i(\overline{A'_4A'_1})$.
   \end{enumerate}
   If in addition, $\i$ can be admissibly extended to a pentagon $(A_1A_2A_3A_4A_5)$ which contains a diagonal $\overline{A_5A_2}$, then $F\circ\i(\overline{A_1A_4})=\i'(\overline{A_i'A_{i+1}'})$ for some $i=1,2,3,4$, where $A_5'=A_1'$.
\end{lemma}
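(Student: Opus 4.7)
The strategy is the \emph{flip move}: since $F$ takes triangulations to triangulations by Lemma~\ref{lem:triangulation:correspondence}, the two-triangle flip neighborhood around $\i(\overline{A_1A_3})$ in $(X,\omg;\Sigma)$ transfers to a corresponding one in $(X',\omg';\Sigma')$ that will realize the desired image quadrilateral. First, because $(A_1A_2A_3A_4)$ is strictly convex, its diagonals $\i(\overline{A_1A_3})$ and $\i(\overline{A_2A_4})$ cross at a single interior point, so they are non-adjacent in $\ms(X,\omg;\Sigma)$; since $F$ is a graph isomorphism, $F\circ\i(\overline{A_1A_3})$ and $F\circ\i(\overline{A_2A_4})$ likewise cross in $X'$. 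Extend $\{\i(\overline{A_iA_{i+1}})\}_{i=1}^{4} \cup \{\i(\overline{A_1A_3})\}$ to a triangulation $\Gm$ of $(X,\omg;\Sigma)$; in $\Gm$, the edge $\i(\overline{A_1A_3})$ is the common edge of the triangles $\i(A_1A_2A_3)$ and $\i(A_1A_3A_4)$, whose union is the embedded strictly convex quadrilateral $\i(A_1A_2A_3A_4)$. The key \emph{flip characterization} is: the only saddle connection on $X$ other than $\i(\overline{A_1A_3})$ that is interior-disjoint from every element of $\Gm\setminus\{\i(\overline{A_1A_3})\}$ is $\i(\overline{A_2A_4})$, since any such saddle connection is trapped inside the embedded convex quadrilateral with endpoints among the four vertices and interior-disjoint from the four sides, hence is one of the two diagonals.

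By Lemma~\ref{lem:triangulation:correspondence}, $F(\Gm)$ is a triangulation of $(X',\omg';\Sigma')$, so the analogous characterization holds in $X'$ with $F$-images. Let $T'_1, T'_2$ be the two triangles of $F(\Gm)$ bounding $F\circ\i(\overline{A_1A_3})$; develop $T'_1\cup T'_2$ into $\R^2$ along the shared edge to produce a polygon $\mathbf{Q}' = (A'_1A'_2A'_3A'_4)$ with $\overline{A'_1A'_3}$ the shared edge and a local half-translation $\i' : \mathbf{Q}' \to X'$ embedding the interior. Since $F\circ\i(\overline{A_2A_4})$ is interior-disjoint from $F(\Gm)\setminus\{F\circ\i(\overline{A_1A_3})\}$ but crosses $F\circ\i(\overline{A_1A_3})$, it is trapped inside $T'_1\cup T'_2$ with endpoints at its vertices, so it coincides with the other diagonal of $\mathbf{Q}'$; hence $\i'(\overline{A'_2A'_4}) = F\circ\i(\overline{A_2A_4})$, which is (i). Part (ii) is immediate since the four sides $\i'(\overline{A'_iA'_{i+1}})$ are the four non-shared edges of $T'_1, T'_2$, all members of $F(\Gm)$. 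Strict convexity of $\mathbf{Q}'$ follows from the fact that both diagonals are interior straight-line saddle connections crossing each other (ruling out non-convexity), and no interior angle can equal $\pi$: if, say, the angle at $A'_2$ were $\pi$, then $A'_2$ would lie on the segment $\overline{A'_1A'_3}$ in the developed picture, meaning the marked point $\i'(A'_2)$ sits in the interior of the saddle connection $F\circ\i(\overline{A_1A_3})$, a contradiction.

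For the pentagon addendum, enlarge $\Gm$ further to contain the pentagon sides $\i(\overline{A_4A_5})$ and $\i(\overline{A_5A_1})$. The edge $\i(\overline{A_4A_1})$ is now shared by the triangles $\i(A_1A_3A_4)$ and $\i(A_1A_4A_5)$ of $\Gm$, whose union is an embedded strictly convex quadrilateral $\i(A_1A_3A_4A_5)$. The analogous flip characterization applied to $\i(\overline{A_4A_1})$ identifies the two triangles of $F(\Gm)$ bounding $F\circ\i(\overline{A_4A_1})$ together with their flipped diagonal. Since the triangle $\i(A_1A_3A_4)$ is the common flip-neighbor of both $\i(\overline{A_1A_3})$ and $\i(\overline{A_4A_1})$ in $\Gm$, matching the two flip characterizations in $F(\Gm)$ forces one of the two triangles adjacent to $F\circ\i(\overline{A_4A_1})$ in $F(\Gm)$ to coincide with one of $T'_1, T'_2$; consequently $F\circ\i(\overline{A_4A_1})$ is a non-shared edge of that triangle, i.e., $\i'(\overline{A'_iA'_{i+1}})$ for some $i\in\{1,2,3,4\}$. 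The main obstacle I anticipate is this last common-triangle matching step --- a localized triangle-preservation fact about $F$ that must be extracted from the flip characterizations alone (since the global triangle-preservation theorem is proved only later in the paper), and for which the additional diagonal $\i(\overline{A_5A_2})$ of the pentagon provides the extra combinatorial data needed to enforce the matching.
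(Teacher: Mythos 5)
Your proof of parts (i) and (ii) is correct and is essentially the paper's own argument: develop the two triangles of $F(\Gm)$ adjacent to $F\circ\i(\overline{A_1A_3})$ into a quadrilateral $(A_1'A_2'A_3'A_4')$, and use that $F\circ\i(\overline{A_2A_4})$ crosses $\i'(\overline{A_1'A_3'})$ and nothing else in $F(\Gm)$ to force it to be the other diagonal and to force strict convexity.

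The genuine gap is in the pentagon addendum, exactly where you flagged it. Your plan is to run a second flip characterization at $\i(\overline{A_1A_4})$ and then ``match'' one of its adjacent triangles in $F(\Gm)$ with $T_1'$ or $T_2'$; but you never establish the matching, and there is no purely combinatorial reason for it available at this point of the paper (triangle preservation is Theorem \ref{thm:triangle}, which is proved later and relies on this lemma). In addition, your second flip characterization tacitly requires the quadrilateral formed by the two triangles adjacent to $\overline{A_1A_4}$, namely $(A_1A_3A_4A_5)$, to be strictly convex so that $\overline{A_3A_5}$ is a genuine crossing diagonal; the hypothesis only guarantees that $\overline{A_5A_2}$ is a diagonal of the pentagon.

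The missing step is closed by using $\overline{A_5A_2}$ concretely rather than as unspecified ``extra combinatorial data''. Take a triangulation $\Gm_1$ containing $\i(\overline{A_1A_3})$, $\i(\overline{A_1A_4})$ and the images of the five sides of the pentagon. Then $\i(\overline{A_5A_2})$ meets exactly $\i(\overline{A_1A_3})$ and $\i(\overline{A_1A_4})$ among the elements of $\Gm_1$, so $F\circ\i(\overline{A_5A_2})$ meets exactly $\i'(\overline{A_1'A_3'})$ and $F\circ\i(\overline{A_1A_4})$ among the elements of $F(\Gm_1)$. Since it crosses the interior diagonal $\i'(\overline{A_1'A_3'})$, it enters $\inter(A_1'A_2'A_3'A_4')$, and its chord there must leave through the interior of a side: it cannot run from vertex to vertex, for then it would coincide with $\i'(\overline{A_2'A_4'})=F\circ\i(\overline{A_2A_4})$, which is disjoint from $F\circ\i(\overline{A_1A_4})$, contradicting the fact that $F\circ\i(\overline{A_5A_2})$ crosses $F\circ\i(\overline{A_1A_4})$. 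The four sides $\i'(\overline{A_i'A_{i+1}'})$ all lie in $F(\Gm_1)$ by part (ii), and the only element of $F(\Gm_1)$ other than $\i'(\overline{A_1'A_3'})$ that $F\circ\i(\overline{A_5A_2})$ may cross is $F\circ\i(\overline{A_1A_4})$; hence $F\circ\i(\overline{A_1A_4})$ is one of the four sides, as required.
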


\begin{proof}
  By Lemma \ref{lem:triangulation:correspondence}, $F(\Gm)$ is a triangulation of $(X',\omg';\Sigma')$ which contains $F\circ\i(\overline{A_1A_3})$. Therefore, there exists an admissible map $\i':(A_1'A_2'A_3'A_4')\to(X',\omg';\Sigma')$ such that \begin{itemize}
    \item $F\circ\i(\overline{A_1A_3})=\J(\overline{A_1'A_3'})$ and
     \item $\i'(\overline{A_{i-1}'A_{i}'})\in F(\Gm)$ for each  $i=1,2,3,4$, where $A'_{0}=A'_4$.
  \end{itemize}

  Notice that $\i(\overline{A_2A_4})$  intersects $\i(\overline{A_1A_3})$ and intersects no other saddle connections from $\Gm$. Therefore,
  $F\circ\i(\overline{A_2A_4})$ intersects $\i'(\overline{A'_1A'_3})$  and intersects no other saddle connections from $F(\Gm)$. Then $(A_1'A_2'A_3'A_4')$ is strictly convex, and
   $F\circ\i(\overline{A_2A_4})=\J(\overline{A_2'A_4'})$.

   If in addition, $\i$ can be admissibly extended to a pentagon $(A_1A_2A_3A_4A_5)$ which contains a diagonal $\overline{A_5A_2}$, let $\Gm_1$ be a triangulation of $(X,\omg;\Sigma)$ which contains $\i(\overline{A_1A_4})$, $\i(\overline{A_1A_3})$, and all images of sides of $(A_1A_2A_3A_4A_5)$ by $\i$. Then
   $\i(\overline{A_5A_2})$  intersects both $\i(\overline{A_1A_4})$ and $\i(\overline{A_1A_3})$ but intersects no other saddle connections from $\Gm_1$. Correspondingly,  $F\circ\i(\overline{A_5A_2})$ intersects both $F\circ\i(\overline{A_1A_4})$ and $\i'(\overline{A_1'A_3'})$  but intersects no other saddle connections from $F(\Gm_1)$. Consequently,  $F\circ\i(\overline{A_1A_4})=\i'(\overline{A_i'A_{i+1}'})$ for some $i=1,2,3,4$, where $A_5'=A_1'$.

\end{proof}

 \begin{proposition}[Simple cylinder]\label{lem:simplecylinder}
    Let $\i:(Q_1Q_2Q_3Q_4)\to(X,\omg;\Sigma)$ be an admissible map such that $\i(Q_1Q_2Q_3Q_4)$ is a simple cylinder which contains $\i(\overline{Q_1Q_2})$ and $\i(\overline{Q_3Q_4})$ as boundary components. Then there exists  an   admissible map $\i':(Q'_1Q'_2Q'_3Q'_4)\to(X',\omg';\Sigma')$  such that
    \begin{itemize}
      \item $F\circ\i(\overline{Q_iQ_j})=\i'(\overline{Q_i'Q_j'})$ for all diagonals and sides
      \item
      $\i'(Q'_1Q'_2Q'_3Q'_4)$ is a simple cylinder which contains $\i'(\overline{Q_1'Q_2'})$ and $\i'(\overline{Q'_3Q'_4})$ as boundary components.
    \end{itemize}

 \end{proposition}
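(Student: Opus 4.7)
The plan is to invoke the Quadrilateral Lemma and then exploit injectivity of $F$ together with Lemma~\ref{lem:translation} to upgrade the resulting admissible map into a true parallelogram identification.

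First, I would parameterize the simple cylinder so that $(Q_1Q_2Q_3Q_4)\subset\R^2$ is a parallelogram and $\gm:=\i(\overline{Q_2Q_3})=\i(\overline{Q_4Q_1})$ is the ``seam'' saddle connection joining the two simple boundary components $\i(\overline{Q_1Q_2})$ and $\i(\overline{Q_3Q_4})$. A parallelogram is strictly convex, so Lemma~\ref{lem:quadri:diagonals} supplies a strictly convex quadrilateral $(Q_1'Q_2'Q_3'Q_4')$ and an admissible map $\i':(Q_1'Q_2'Q_3'Q_4')\to(X',\omg';\Sigma')$ sending the two diagonals to $\i'(\overline{Q_1'Q_3'})$ and $\i'(\overline{Q_2'Q_4'})$. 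Extending the four sides $\i(\overline{Q_iQ_{i+1}})$ to a triangulation $\Gm$ and applying part (ii) of the same lemma, $F(\Gm)$ contains the four sides $\i'(\overline{Q_i'Q_{i+1}'})$. Using that $F$ is a bijection on vertices and that the diagonal correspondence pins down the vertex matching up to cyclic symmetry, one concludes that $F\circ\i(\overline{Q_iQ_{i+1}})=\i'(\overline{Q_i'Q_{i+1}'})$ for $i=1,2,3,4$.

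Next, since $\i(\overline{Q_2Q_3})=\i(\overline{Q_4Q_1})$ and $F$ is injective, we obtain $\i'(\overline{Q_2'Q_3'})=\i'(\overline{Q_4'Q_1'})$. Choose interior points $P\in\inter(\overline{Q_2'Q_3'})$ and $P'\in\inter(\overline{Q_4'Q_1'})$ with $\i'(P)=\i'(P')$; by convexity of $(Q_1'Q_2'Q_3'Q_4')$ the segment $\overline{PP'}$ lies in the quadrilateral with no vertex in its interior, so Lemma~\ref{lem:translation} forces the holonomy along $\i'(\overline{PP'})$ to be a genuine translation. Therefore $\overline{Q_4'Q_1'}$ is a pure translate of $\overline{Q_2'Q_3'}$, and the convex quadrilateral $(Q_1'Q_2'Q_3'Q_4')$ is necessarily a parallelogram. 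Gluing the two identified opposite sides by translation exhibits $\i'(Q_1'Q_2'Q_3'Q_4')$ as a flat cylinder isometric to $(\R/c\Z)\times(0,h)$, with boundary components the two single saddle connections $\i'(\overline{Q_1'Q_2'})$ and $\i'(\overline{Q_3'Q_4'})$. Maximality is immediate: any larger cylinder containing it has a homotopic core of the same length $c$, so its full boundary circle of length $c$ would already be realised by the single saddle connection $\i'(\overline{Q_1'Q_2'})$ (resp.\ $\i'(\overline{Q_3'Q_4'})$), and the cone points at its endpoints prevent any further extension. Hence $\i'(Q_1'Q_2'Q_3'Q_4')$ is a simple cylinder with the prescribed boundary components.

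The main obstacle is ruling out a strict half-translation identification of the two opposite sides in the target, which would produce a M\"obius-type quotient rather than a cylinder. Lemma~\ref{lem:translation} handles this cleanly because admissibility of $\i'$ forbids interior vertices of cone angle $\pi$; the strict convexity of $(Q_1'Q_2'Q_3'Q_4')$ produced by Lemma~\ref{lem:quadri:diagonals} is precisely what allows the lemma to fire.
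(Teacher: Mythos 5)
Your overall architecture (apply Lemma~\ref{lem:quadri:diagonals} to get the target quadrilateral with matching diagonals, then deduce that the two ``seam'' sides are identified and that the identification is a translation) is reasonable, and the final portion --- using Lemma~\ref{lem:translation} to rule out a strict half-translation identification and conclude that $(Q_1'Q_2'Q_3'Q_4')$ is a parallelogram gluing up to a flat annulus --- is sound. But there is a genuine gap at the step where you claim that ``$F$ is a bijection on vertices and the diagonal correspondence pins down the vertex matching,'' hence $F\circ\i(\overline{Q_iQ_{i+1}})=\i'(\overline{Q_i'Q_{i+1}'})$ for all four sides. Lemma~\ref{lem:quadri:diagonals} only gives you two facts: the images of the two \emph{diagonals} are the diagonals of $(Q_1'Q_2'Q_3'Q_4')$, and $F(\Gm)$ \emph{contains} the four sides of $(Q_1'Q_2'Q_3'Q_4')$. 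It does not tell you that the sides of the target quadrilateral are the $F$-images of the sides of the source quadrilateral. A side $\i(\overline{Q_iQ_{i+1}})$ is disjoint from both diagonals, so all you can read off is that its $F$-image is some element of $F(\Gm)$ disjoint from both target diagonals --- and essentially every saddle connection of $F(\Gm)$ outside the target quadrilateral has that property. Injectivity of $F$ is irrelevant here; the problem is not that two sides could have the same image but that a side could map to a far-away edge of the triangulation. Consequently your next sentence, deducing $\i'(\overline{Q_2'Q_3'})=\i'(\overline{Q_4'Q_1'})$ from $\i(\overline{Q_2Q_3})=\i(\overline{Q_4Q_1})$, also does not follow, since it presupposes that the seam maps to a side of the target quadrilateral.

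This missing step is precisely the content of the paper's argument. There, one first exploits the geometry of the source: the simple cylinder contains infinitely many (in particular, three) interior saddle connections $\eta_1,\eta_2,\eta_3$, each crossing both the diagonal $\i(\overline{Q_1Q_3})$ and the seam $\i(\overline{Q_2Q_3})$ while missing every other edge of $\Gm$. Their images must cross $\i'(\overline{Q_1'Q_3'})$ and $F\circ\i(\overline{Q_2Q_3})$ and nothing else in $F(\Gm)$, which first forces $F\circ\i(\overline{Q_2Q_3})$ to be a side of $(Q_1'Q_2'Q_3'Q_4')$, and then --- by a counting argument showing that at most two such saddle connections can exist in the target unless the opposite side is identified with it --- forces $\i'(\overline{Q_2'Q_3'})=\i'(\overline{Q_4'Q_1'})$, i.e.\ the target quadrilateral is already a simple cylinder. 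Only afterwards are the two boundary sides $\i(\overline{Q_1Q_2})$, $\i(\overline{Q_3Q_4})$ matched up, via a pentagon extension (Lemma~\ref{lem:emdedding:strip} plus the last clause of Lemma~\ref{lem:quadri:diagonals}); the once-marked torus is treated separately. Without an argument of this kind, your proof does not get off the ground, because everything downstream (the translation holonomy, the parallelogram shape, the cylinder structure) rests on the unestablished side correspondence.
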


 \begin{figure}[b]
  \begin{tikzpicture}[scale=0.7]


       \path[fill,black]
   (3+2-9,-3) coordinate(c2) circle(0.05) node[above]{\tiny $Q_2$}
   (6+2-9,-2) coordinate(c1) circle(0.05)node[above]{\tiny $Q_5$}
   (9+2-9,-6) coordinate(c4) circle(0.05)node[below]{\tiny $Q_4$}
   (7.5+2-9,-3) coordinate(c5) circle(0.05)node[above]{\tiny $Q_1$}
   (4.5+2-9,-5-1) coordinate(c3) circle(0.05)node[below]{\tiny $Q_3$};
   \draw (c1)--(c2)--(c3)--(c4)--(c5)--cycle;
   \draw(c2)--(c4) (c3)--(c5) (c2)--(c5);
   \draw
          (-3+2,-7)node{(a)};


           \path[fill,black]
   (3+2-9+8,-3) coordinate(c2) circle(0.05) node[left]{\tiny $Q'_2$}
   (7.5+2-9+8,-3) coordinate(c1) circle(0.05)node[right]{\tiny $Q'_1$}
   (9+2-9+8,-6) coordinate(c4) circle(0.05)node[below]{\tiny $Q'_4$}
   (7.5+2-9+5.5,-2) coordinate(c5) circle(0.05)node[above]{\tiny $Q'_5$}
   (4.5+2-9+8,-5-1) coordinate(c3) circle(0.05)node[below]{\tiny $Q'_3$}
   (2.2+2-9+8,-5)circle(0.05)
   (3+2-9+8,-4.5) node{\tiny $\Delta'$};
   \draw (c1)--(c2)--(c3)--(c4)--cycle;
   \draw (c3)--(c1)--(c4) (c1)--(c5)--(c2) ;
   \draw(c2)--(c4) ;
   \draw[dashed] (c2)--(2.2+2-9+8,-5)--(c3);

   \draw
          (-3+8+2,-7)node{(b)};
  \end{tikzpicture}
  \caption{Correspondence between Simple cylinders. }\label{Fig:simplecylinder:preserving}
\end{figure}
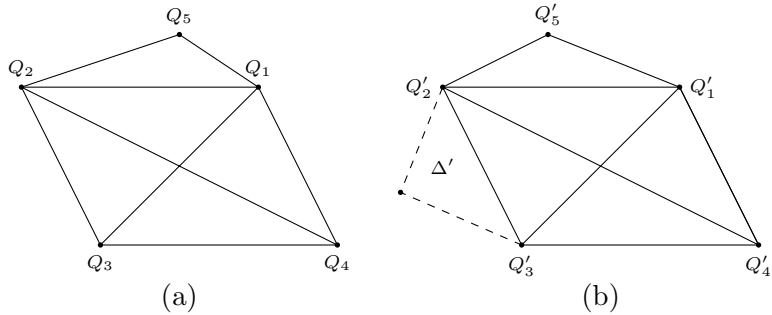
 \begin{proof}
    Let $\Gm$ be
     triangulation of $(X,\omg;\Sigma)$ which contains $\i(\overline{Q_1Q_2})$, $\i(\overline{Q_2Q_3})=\i(\overline{Q_4Q_1})$, $\i(\overline{Q_3Q_4})$, $\i(\overline{Q_3Q_1})$. By Lemma \ref{lem:quadri:diagonals}, there exist  a  strictly convex quadrilateral $ (Q'_1Q_2'Q_3'Q_4')$, and  an admissible map $$\i': (Q'_1Q_2'Q_3'Q_4')\to(X',\omg';\Sigma')$$ such that
   \begin{itemize}
     \item  $F\circ\i(\overline{Q_1Q_3})=\i'(\overline{Q_1'Q_3'})$ and
     $F\circ\i(\overline{Q_2Q_4})=\i'(\overline{Q_2'Q_4'})$;
     \item  $ F(\Gm)$ is a triangulation of $(X',\omg';\Sigma')$ which contains
     $ \i'(\overline{Q'_1Q'_2})$, $\i'(\overline{Q'_2Q'_3})$, $\i'(\overline{Q'_3Q'_4})$,
     $\i(\overline{Q'_4Q'_1})$.
   \end{itemize}

    If $(X,\omg;\Sigma)$ is a torus with one marked point, then $\Gm_1:=\{\i(\overline{Q_1Q_2}),\i(\overline{Q_2Q_3}),$ $\i(\overline{Q_1Q_3}\}$ is a triangulation. Therefore,   $F(\Gm_1):=\{F\circ\i(\overline{Q_1Q_2}),F\circ\i(\overline{Q_2Q_3}),
    F\circ\i(\overline{Q_1Q_3})\}$ is a triangulation of $(X',\omg';\Sigma')$. Then $\i'(\overline{Q_1'Q_2'})=\i'(\overline{Q_3'Q_4'})$ and $\i'(\overline{Q_2'Q_3'})=\i'(\overline{Q_4'Q_1'})$. The proposition follows.

    In the following, we assume that $(X,\omg;\Sigma)$ is not a torus with one marked point. Let $\eta_1,\eta_2,\eta_3$ be three different interior saddle connections of $\C$ such that each of them intersects both $\i(\overline{Q_1Q_3})$ and $\i(\overline{Q_2Q_3})$. Then each of $\{F(\eta_1),F(\eta_2),F(\eta_3)\}$  intersects both   $\i'(\overline{Q'_1Q'_3})$ and $F\circ\i(\overline{Q_2Q_3})$ while disjoint with other saddle connections from $F(\Gm)$. Therefore, $F\circ\i(\overline{Q_2Q_3})=\i'(\overline{Q_{i-1}'Q_{i}'})$ for some $i=1,2,3,4$, where $Q_0'=Q_4'$. Up to relabelling, we may suppose that  $F\circ\i(\overline{Q_2Q_3})=\i'(\overline{Q_2'Q_3'})$.
    By Lemma \ref{lem:triangle}, we see that
  $$\i'(\overline{Q_2'Q_3'})\neq \i'(\overline{Q_1'Q_2'}),~\i'(\overline{Q_2'Q_3'})\neq \i'(\overline{Q_3'Q_4'}).$$
   If $\i'(\overline{Q_2'Q_3'})\neq\i'(\overline{Q_1'Q_4'})$, consider the triangles  from the complement of $F(\Gm)$  on  $(X',\omg';\Sigma')$. Let $\Delta'$ be the one which contains $\i'(\overline{Q_2'Q_3'})$ as a boundary and which is not contained in the image of $(Q_1'Q_2'Q_3'Q_4')$ under the admissible map $\i'$ (see Figure \ref{Fig:simplecylinder:preserving}(b) ). The other two boundary saddle connections of $\Delta'$ can not be $\i'(\overline{Q_2'Q_3'})$. In this case, there are at most two saddle connections of $(X',\omg';\Sigma')$ which intersects both  $\i'(\overline{Q_2'Q_3'})$ and $\i'(\overline{Q_1'Q_3'})$ while disjoint with other saddle connections from $F(\Gm)$, where the maximum holds only if $\Delta'\cup(Q_1'Q_2'Q_3'Q_4')$ is a pentagon strictly convex at each vertex. Therefore,  $\i'(\overline{Q_1'Q_4'})=\i'(\overline{Q_2'Q_3'})$. In particular, the image of $(Q_1'Q_2'Q_3'Q_4')$ by $\i'$ is a simple cylinder on $(X',\omg';\Sigma')$.

   Next, we consider  $F\circ \i(\overline{Q_1Q_2})$ and $F\circ\i(\overline{Q_3Q_4})$. Since $(X,\omg;\Sigma)$ is not a torus with one marked point, it follows that  $\i(\overline{Q_1Q_2})\neq \i(\overline{Q_3Q_4})$. By Lemma \ref{lem:emdedding:strip},  we can admissibly extend $\i$ to  a convex pentagon $(Q_1Q_5Q_2Q_3Q_4)$ which is strictly convex at $Q_5$ (see Figure \ref{Fig:simplecylinder:preserving}(a)). By Lemma \ref{lem:quadri:diagonals}, this implies that $F(\overline{Q_1Q_2})$ is $\i'(\overline{Q_1'Q_2'})$ or $\i'(\overline{Q_3'Q_4'})$, say $\i'(\overline{Q_1'Q_2'})$. Similarly, it can be shown that $F\circ\i(\overline{Q_3Q_4})=\i'(\overline{Q_3'Q_4'})$.
 \end{proof}

\section{Triangle preserving}\label{sec:triangle}

 The goal of this section is to prove the following theorem.

\begin{theorem}\label{thm:triangle}
  Let $\gm_1,\gm_2,\gm_3$ be three saddle connections on $(X,\omg;\Sigma)$ which  bound a triangle.  Then $F(\gm_1),F(\gm_2),F(\gm_3)$ also bound a triangle on $(X',\omg';\Sigma')$.
\end{theorem}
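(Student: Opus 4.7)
The plan is to split into cases by the relationship between $\Delta$ and the cylinders of $(X,\omg;\Sigma)$, reduce each case to an admissible pentagon extension constructed in Section \ref{sec:admissible:pentagon}, and then push the combinatorial pentagon structure across $F$ using Proposition \ref{lem:simplecylinder} and the Quadrilateral lemma (Lemma \ref{lem:quadri:diagonals}).

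If $\Delta$ is contained in a simple cylinder, it is one of the two triangles obtained by cutting that cylinder along a diagonal, and Proposition \ref{lem:simplecylinder} immediately supplies an image simple cylinder in which $F(\gm_1), F(\gm_2), F(\gm_3)$ appear as the corresponding sides and diagonal, and therefore bound a triangle. Otherwise, Corollary \ref{cor:pentagon:strictlyconvex} (when $\Delta$ lies in no cylinder) or Lemma \ref{lem:pentagon:cylinder} (when $\Delta$ lies in a non-simple cylinder with a side on the cylinder boundary) produces a convex admissible pentagon $\i:\mathcal P=(P_1P_2P_3P_4P_5)\to(X,\omg;\Sigma)$ inscribing $\Delta$ as the central triangle $(P_1P_3P_4)$, with two of $\gm_1,\gm_2,\gm_3$ appearing as interior diagonals $\overline{P_1P_3}, \overline{P_1P_4}$ and the remaining one as the pentagon side $\overline{P_3P_4}$.

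Triangulate $\mathcal P$ by its two interior diagonals and complete to a triangulation $\Gm$ of $(X,\omg;\Sigma)$; by Lemma \ref{lem:triangulation:correspondence}, $F(\Gm)$ is a triangulation of $(X',\omg';\Sigma')$. Apply Lemma \ref{lem:quadri:diagonals} to the strictly convex sub-quadrilateral $(P_1P_2P_3P_4)$: one obtains an admissible image quadrilateral $(P_1'P_2'P_3'P_4')$ in $(X',\omg';\Sigma')$ whose diagonal $\i'(\overline{P_1'P_3'})$ equals $F\circ\i(\overline{P_1P_3})$ and whose four sides lie in $F(\Gm)$, so that together with the diagonal they cut the image quadrilateral into two faces of $F(\Gm)$; moreover, since $\mathcal P$ extends this sub-quadrilateral with diagonal $\overline{P_5P_2}$, the last clause of Lemma \ref{lem:quadri:diagonals} forces $F\circ\i(\overline{P_1P_4})$ to coincide with one of these four sides. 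A symmetric application to the sub-quadrilateral $(P_1P_3P_4P_5)$ produces a second image quadrilateral whose diagonal is $F\circ\i(\overline{P_1P_4})$ and one of whose sides is $F\circ\i(\overline{P_1P_3})$. The two image quadrilaterals share the image of the common side $\i(\overline{P_3P_4})$, and a consistency/intersection-pattern analysis --- using overlapping sub-quadrilaterals of $\mathcal P$ run through Lemma \ref{lem:quadri:diagonals} in parallel, together with Lemma \ref{lem:coconvex} in the angle-$\pi$ case of Lemma \ref{lem:pentagon:cylinder} to strictify any non-strictly-convex auxiliary quadrilateral without disturbing the relevant diagonal --- resolves the permutation ambiguity inherent in Lemma \ref{lem:quadri:diagonals} and exhibits a single face of $F(\Gm)$ bounded exactly by $F(\gm_1)\cup F(\gm_2)\cup F(\gm_3)$.

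The hardest part is the permutation ambiguity of Lemma \ref{lem:quadri:diagonals}: its clause (ii) only tells us the image quadrilateral's four sides lie in $F(\Gm)$, not which image side is the $F$-image of which original side. Resolving this forces one to run several overlapping sub-quadrilaterals of $\mathcal P$ in parallel and extract combinatorial consistency constraints from them simultaneously. A secondary difficulty arises in the angle-$\pi$ case of Lemma \ref{lem:pentagon:cylinder}: one of the auxiliary sub-quadrilaterals used in the consistency analysis may fail to be strictly convex or may even contain a marked point in the interior of one of its sides, and Lemma \ref{lem:coconvex} is the tool needed to repair it, replacing it by a genuinely admissible, strictly convex polygon carrying the same relevant diagonal.
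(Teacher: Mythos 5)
Your case split and the reduction to an admissible pentagon are exactly the paper's: the simple-cylinder case is handled by Proposition \ref{lem:simplecylinder}, and otherwise Corollary \ref{cor:pentagon:strictlyconvex} or Lemma \ref{lem:pentagon:cylinder} inscribes $\Delta$ in a convex admissible pentagon with $\gm_1,\gm_2$ as diagonals and $\gm_3$ as the side $\overline{P_3P_4}$. The diagonals are then correctly pinned down via Lemma \ref{lem:quadri:diagonals} (this is the paper's Lemma \ref{lem:pentagon:diagonals}). So far this is sound.

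The genuine gap is where you write that ``a consistency/intersection-pattern analysis, using overlapping sub-quadrilaterals of $\mathcal P$ run through Lemma \ref{lem:quadri:diagonals} in parallel,'' resolves the permutation ambiguity and identifies $F(\gm_3)$ with $\i'(\overline{P_3'P_4'})$. That is precisely the content of the paper's Theorem \ref{thm:pentagon:preserving}, and it cannot be extracted from the sub-quadrilaterals of $\mathcal P$ alone: every saddle connection interior to $\mathcal P$ has an intersection pattern with the sides that is symmetric under the relabelings you are trying to exclude, so no amount of cross-checking the quadrilaterals $(P_1P_2P_3P_4)$ and $(P_1P_3P_4P_5)$ distinguishes, say, $\i'(\overline{P_2'P_3'})$ from $\i'(\overline{P_3'P_4'})$ as the image of $\i(\overline{P_3P_4})$. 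The paper breaks the symmetry by admissibly extending the pentagon \emph{outward} to hexagons (Lemmas \ref{lem:extension1}, \ref{lem:emdedding:strip}, \ref{lem:twostrips}), introducing new saddle connections such as $\i(\overline{A_6A_4})$ whose intersection data single out individual sides; this forces a case analysis on which vertex realizes $\mathcal{D}(\overline{A_{i-1}A_{i+1}})$ and on whether certain sub-quadrilaterals sit inside cylinders (Lemmas \ref{lem:pentagon:side1} and \ref{lem:pentagon:side12}), a combinatorial synthesis over all diagonals (Lemma \ref{lem:pentagon:side2}, the $27$-case Table \ref{tab:cases}), and a terminating algorithm bounded by the area of $(X,\omg)$ to propagate from two adjacent identified sides to all of them (Lemma \ref{cor:3to4}). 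None of this is optional, and your invocation of Lemma \ref{lem:coconvex} does not substitute for it --- that lemma is used in the paper only later, for orientation consistency in Section \ref{sec:homeo}, not to repair the angle-$\pi$ pentagon of Lemma \ref{lem:pentagon:cylinder}, which the paper instead treats by the separate case analysis of Lemma \ref{lem:pentagon:side12}.
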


Our strategy is to show that $F$ preserves convex pentagons.
 \begin{theorem}[Pentagon preserving]\label{thm:pentagon:preserving}
     Let $\i:(A_1A_2A_3A_4 A_5)\to (X,\omg;\Sigma)$ be an admissible map, where $(A_1A_2A_3A_4 A_5)$  is a convex pentagon strictly convex at $A_2,A_3,A_4,A_5$.  Then there exists an admissible map $$\i': (A'_1A_2'A_3'A_4'A'_5)\to(X',\omg';\Sigma'),$$ where $ (A'_1A_2'A_3'A_4'A'_5)$ is strictly convex at  $A_2',A_3',A_4',A_5'$, such that
     $$ F\circ \i(\overline{A_iA_j})=\i'(\overline{A_i'A_j'}) $$
     for all sides and diagonals.
 \end{theorem}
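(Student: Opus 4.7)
The plan is to assemble the target admissible pentagon by gluing two admissible quadrilaterals produced by the Quadrilateral Lemma. Fix a triangulation $\Gm$ of $(X,\omg;\Sigma)$ containing the five sides $\i(\overline{A_iA_{i+1}})$ of the pentagon together with the diagonals $\i(\overline{A_1A_3})$ and $\i(\overline{A_1A_4})$; by Lemma \ref{lem:triangulation:correspondence}, $F(\Gm)$ is a triangulation of $(X',\omg';\Sigma')$. The sub-quadrilaterals $(A_1A_2A_3A_4)$ and $(A_1A_3A_4A_5)$ are strictly convex: strict convexity at the vertices lying in $\{A_2,A_3,A_4,A_5\}$ is inherited from the pentagon, while the interior angles at $A_1$ in these sub-quadrilaterals are $\angle A_4A_1A_2<\angle A_5A_1A_2\leq\pi$ and $\angle A_5A_1A_3<\angle A_5A_1A_2\leq\pi$ respectively. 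Applying Lemma \ref{lem:quadri:diagonals} to each yields strictly convex quadrilaterals $(B_1B_2B_3B_4),(C_1C_3C_4C_5)\subset\R^2$ and admissible maps $\i'_a,\i'_b$ into $(X',\omg';\Sigma')$ such that $F$ sends all sides and diagonals to their expected images.

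Both images $\i'_a(B_1B_3B_4)$ and $\i'_b(C_1C_3C_4)$ coincide with the unique triangle of $F(\Gm)$ bounded by $F\circ\i(\overline{A_1A_3})$, $F\circ\i(\overline{A_3A_4})$ and $F\circ\i(\overline{A_1A_4})$. Up to applying an element of the half-translation group of $\R^2$ to the source of $\i'_b$, I may therefore arrange that $B_i=C_i$ for $i\in\{1,3,4\}$, that $\i'_a$ and $\i'_b$ agree on the common triangle, and that $B_2$ and $C_5$ lie on opposite sides of that shared triangle. Setting $A_1'=B_1$, $A_2'=B_2$, $A_3'=B_3$, $A_4'=B_4$, $A_5'=C_5$, the union is a pentagon $(A_1'A_2'A_3'A_4'A_5')\subset\R^2$, and the common extension $\i'$ is a local half-translation on this pentagon sending vertices (and only vertices) into $\Sigma'$. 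To promote $\i'$ to an admissible map, I apply Lemma \ref{lem:extension1} with $n=4$ and $m=5$, splitting along the diagonal $\overline{A_1'A_4'}$; the disjointness-or-equality hypothesis on side images is automatic since all of them are edges of the triangulation $F(\Gm)$.

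The remaining task is strict convexity at $A_2',A_3',A_4',A_5'$. Strict convexity at $A_2'$ and $A_5'$ is inherited directly from the strict convexity of the two quadrilaterals produced by the Quadrilateral Lemma. The main obstacle is strict convexity at $A_3'$ and $A_4'$, where the interior angle of the glued pentagon is the sum of the two subquadrilateral angles at $B_3=C_3$ (resp.\ $B_4=C_4$): each summand is less than $\pi$, but a priori the total could be $\geq\pi$. I would exclude this bad case by invoking the Quadrilateral Lemma on additional sub-quadrilaterals such as $(A_2A_3A_4A_5)$ and $(A_1A_2A_4A_5)$, extracting the remaining diagonals $F\circ\i(\overline{A_2A_4})$, $F\circ\i(\overline{A_3A_5})$, $F\circ\i(\overline{A_2A_5})$, and then exploiting combinatorial intersection constraints: for instance $F\circ\i(\overline{A_2A_5})$ must cross both $F\circ\i(\overline{A_1A_3})$ and $F\circ\i(\overline{A_1A_4})$, which fixes on which side of the triangle $(A_1'A_3'A_4')$ the vertices $A_2'$ and $A_5'$ lie and, together with analogous constraints for the other diagonals, forces the angles at $A_3'$ and $A_4'$ to be strictly less than $\pi$.
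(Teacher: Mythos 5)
There is a genuine gap, and it sits exactly at the main difficulty of the paper's Section~\ref{sec:triangle}. Your construction of the glued pentagon and the identification of the \emph{diagonals} is essentially sound and parallels the paper's Pentagon Lemma~I (Lemma \ref{lem:pentagon:diagonals}): there the image pentagon is obtained by attaching to the image quadrilateral the triangle $\Delta'$ of the complement of $F(\Gm)$ adjacent to $F\circ\i(\overline{A_1A_4})$, and the positions of the new vertex and of $F\circ\i(\overline{A_3A_5})$ are pinned down by intersection patterns, much as you suggest for the convexity at $A_3'$ and $A_4'$. But your claim that Lemma~\ref{lem:quadri:diagonals} yields quadrilaterals ``such that $F$ sends all sides and diagonals to their expected images'' is false: that lemma only asserts $F\circ\i(\overline{A_1A_3})=\i'(\overline{A_1'A_3'})$ and $F\circ\i(\overline{A_2A_4})=\i'(\overline{A_2'A_4'})$, plus the statement that the sides of the \emph{image} quadrilateral belong to $F(\Gm)$. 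It does not say that $F\circ\i(\overline{A_1A_2})$ is a side of the image quadrilateral at all, let alone the side in the matching position. At this stage of the argument one cannot assume that $F$ carries the boundary of a triangle of $\Gm$ to the boundary of the corresponding triangle of $F(\Gm)$ --- that is precisely the triangle-preservation statement (Theorem~\ref{thm:triangle}) that Theorem~\ref{thm:pentagon:preserving} is designed to prove, so invoking it here would be circular.

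Identifying the sides is where the paper spends almost all of its effort: Pentagon Lemmas~II and~III extend $\i$ and $\i'$ to hexagons in carefully chosen directions (via Lemmas~\ref{lem:emdedding:strip} and~\ref{lem:twostrips}) and use the intersection pattern of the new diagonals to narrow down, for each diagonal of the pentagon, which of the adjacent sides must be correctly matched; Pentagon Lemma~IV then runs a $27$-case combinatorial analysis to conclude that at least two \emph{adjacent} sides are correctly matched; and Lemma~\ref{cor:3to4} upgrades ``three sides of a quadrilateral matched'' to ``all four matched'' by an iterative flip procedure whose termination rests on the finiteness of saddle connections of bounded length. None of this is addressed in your proposal, so as written it establishes only the diagonal correspondence (and, with some additional work on the intersection constraints you sketch, the strict convexity at $A_2',\dots,A_5'$), not the full conclusion of the theorem.
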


 \begin{proof}[Proof of Theorem \ref{thm:triangle} assuming Theorem \ref{thm:pentagon:preserving}]
  Let $\Delta$ be a triangle bounded by $\gm_1,\gm_2,\gm_3$. If $\Delta$ is contained in some simple cylinder, the theorem follows from Proposition \ref{lem:simplecylinder}. If $\Delta$ is not contained in any simple cylinder, it  then follows from Lemma \ref{lem:pentagon:cylinder} and Corollary \ref{cor:pentagon:strictlyconvex} that there exists an admissible
  map $\i:(A_1A_2A_3A_4 A_5)\to(X,\omg;\Sigma)$, where
  $(A_1A_2A_3A_4 A_5)$  is a convex pentagon strictly convex at $A_2,A_3,A_4, A_5$, such that $\Delta$ is contained in the image of $(A_1A_2A_3A_4 A_5)$ by $\i$.  By Theorem \ref{thm:pentagon:preserving}, there exists an  admissible map $\i':(A'_1A_2'A_3'A_4'A'_5)\to (X',\omg';\Sigma')$, where $(A'_1A_2'A_3'A_4'A'_5)$ is a   convex pentagon, strictly convex at $A_2',A_3',A_4', A_5'$,  such that $F\circ\i(\overline{A_iA_j})=\i'(\overline{A_i'A_j'})$ for all diagonals and sides $\overline{A_iA_j}$. Since $\gm_1,\gm_2,\gm_3$ bound a triangle which is contained in the image  $(A_1A_2A_3A_4A_5)$ by $\i$, then $F(\gm_1),F(\gm_2),F(\gm_3)$ also bound a triangle which is contained in the image of  $(A'_1A_2'A_3'A_4'A'_5)$ by $\i'$. In particular, $F(\gm_1),F(\gm_2),F(\gm_3)$ bound a triangle on $(X',\omg';\Sigma')$.
  \end{proof}

  The reminder of this section is to prove Theorem \ref{thm:pentagon:preserving}.

\begin{figure}[t]
  \begin{tikzpicture}[scale=0.7]


       \path[fill,black]
   (3+2-9,-3) coordinate(c2) circle(0.05) node[above]{\tiny $A_2$}
   (6+2-9,-3) coordinate(c1) circle(0.05)node[above]{\tiny $A_1$}
   (9+2-9,-6) coordinate(c4) circle(0.05)node[below]{\tiny $A_4$}
   (7.5+2-9,-3) coordinate(c5) circle(0.05)node[above]{\tiny $A_5$}
   (4.5+2-9,-5-1) coordinate(c3) circle(0.05)node[below]{\tiny $A_3$};
   \draw[dashed] (c1)--(c2)--(c3)--(c4)--(c5)--cycle;
   \draw (c3)--(c1)--(c4);
   \draw(c2)--(c4) (c3)--(c5);
   \draw
          (-3+2,-7)node{(a)};


           \path[fill,black]
   (3+2-9+8,-2.7) coordinate(c2) circle(0.05) node[left]{\tiny $B'_2$($A_4'$)}
   (6+2-9+8,-3) coordinate(c1) circle(0.05)node[right]{\tiny $B'_1$($A_1'$)}
   (9+2-9+8,-6) coordinate(c4) circle(0.05)node[below]{\tiny $B'_4$($A_2'$)}
   (7.5+2-9+5.5,-2) coordinate(c5) circle(0.05)node[above]{\tiny $B'_5$($A_5'$)}
   (4.5+2-9+8,-5-1) coordinate(c3) circle(0.05)node[below]{\tiny $B'_3$($A_3'$)};
   \draw[dashed] (c1)--(c4)--(c3)--(c2)--(c5)--cycle;
   \draw (c3)--(c1)--(c2) ;
   \draw(c2)--(c4) (c3)--(c5);
   \draw
          (-3+8+2,-7)node{(b)};


       \path[fill,black]
   (3+2-9,-3-6) coordinate(c2) circle(0.05) node[above]{\tiny $A_2$}
   (6+2-9,-3-6+0.7) coordinate(c1) circle(0.05)node[above]{\tiny $A_1$}
   (9+2-9,-6-6) coordinate(c4) circle(0.05)node[below]{\tiny $A_4$}
   (7.5+2-9,-3-6) coordinate(c5) circle(0.05)node[above]{\tiny $A_5$}
   (4.5+2-9,-5-1-6) coordinate(c3) circle(0.05)node[below]{\tiny $A_3$};
   \draw[dashed] (c1)--(c2)--(c3)--(c4)--(c5)--cycle;
   \draw (c3)--(c1)--(c4);
   \draw(c2)--(c4) (c3)--(c5);
   \draw[red](c2)--(c5);
   \draw
          (-3+2,-7-6)node{(c)};


           \path[fill,black]
   (3+2-9+8,-2.7-6) coordinate(c2) circle(0.05) node[left]{\tiny $B'_2$($A_4'$)}
   (6+2-9+9.5,-3-6) coordinate(c1) circle(0.05)node[right]{\tiny $B'_1$($A_1'$)}
   (9+2-9+8,-6-6) coordinate(c4) circle(0.05)node[below]{\tiny $B'_4$($A_2'$)}
   (7.5+2-9+5.5,-2-6) coordinate(c5) circle(0.05)node[above]{\tiny $B'_5$($A_5'$)}
   (4.5+2-9+8,-5-1-6) coordinate(c3) circle(0.05)node[below]{\tiny $B'_3$($A_3'$)};
   \draw[dashed] (c1)--(c4)--(c3)--(c2)--(c5)--cycle;
   \draw (c3)--(c1) (c1)--(c2) ;
   \draw(c2)--(c4) (c3)--(c5);
   \draw[red](c5)--(c4);
   \draw
          (-3+8+2,-7-6)node{(d)};

  \end{tikzpicture}
  \caption{Correspondence between pentagons. In (a) and (b), the pentagons are not strictly convex at $A_1$ and $A_1'$. In (c) and (d), the pentagons are strictly convex at each vertex. }\label{Fig:pentagon:preserving}
\end{figure}
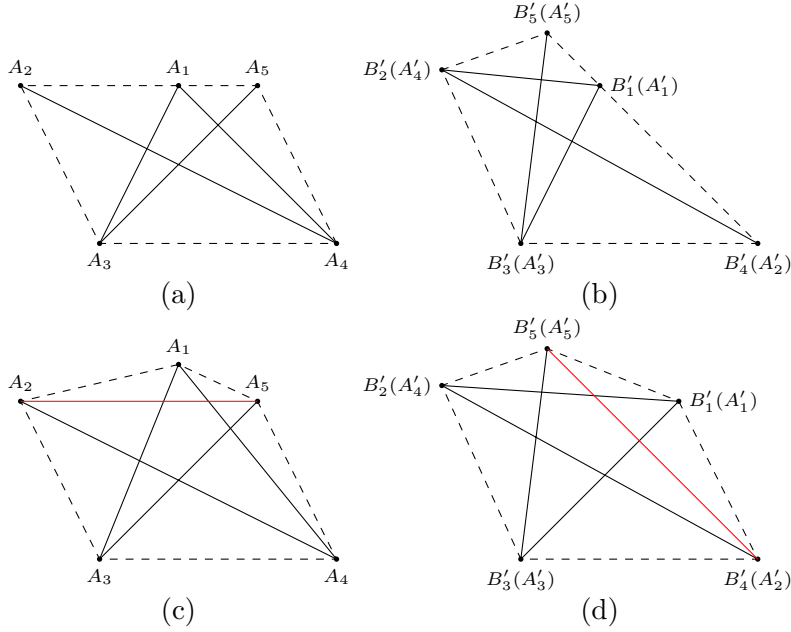
\begin{lemma}[Pentagon lemma I]\label{lem:pentagon:diagonals}
   Let $\i:(A_1A_2A_3A_4 A_5)\to (X,\omg;\Sigma)$ be an admissible map, where $(A_1A_2A_3A_4 A_5)$  is a convex pentagon strictly convex at $A_2,A_3,A_4,A_5$.  Then there exists an admissible map $$\i': (A'_1A_2'A_3'A_4'A'_5)\to(X',\omg';\Sigma'),$$ where $ (A'_1A_2'A_3'A_4'A'_5)$  is strictly convex at  $A_2',A_3',A_4',A_5'$ satisfy the following.
   \begin{enumerate}
      \item[\textup{(i)}]  If $(A_1A_2A_3A_4A_5)$ is strictly convex at each vertex, so is $(A_1'A_2'A_3'A_4'A_5')$.
     \item[\textup{(ii)}]  $F\circ\i(\overline{A_iA_j})=\i'(\overline{A_i'A_j'})$  for all diagonals $\overline{A_iA_j}$.
     \item[\textup{(iii)}] For any triangulation $\Gm$ of $(X,\omg;\Sigma)$ which contains $\i(\overline{A_1A_2})$, $\i(\overline{A_2A_3})$, $\i(\overline{A_3A_4})$, $\i(\overline{A_4A_5})$ and $\i(\overline{A_5A_1})$, $ F(\Gm)$ is a triangulation of $(X',\omg';\Sigma')$ which contains
     $ \i'(\overline{A'_1A'_2})$, $\i'(\overline{A'_2A'_3})$, $\i'(\overline{A'_3A'_4})$,
     $\i(\overline{A'_4A'_5})$, $\i'(\overline{A'_5A'_1}).$
   \end{enumerate}
\end{lemma}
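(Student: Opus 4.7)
The plan is to apply Lemma \ref{lem:quadri:diagonals} to two overlapping strictly convex sub-quadrilaterals of the pentagon, glue the outputs in $\R^2$ to form $(A_1'A_2'A_3'A_4'A_5')$, and then use a third application of the same lemma to handle the one remaining diagonal.

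First, choose a triangulation $\Gm$ of $(X,\omg;\Sigma)$ containing the five pentagon sides together with the two diagonals $\i(\overline{A_1A_3})$ and $\i(\overline{A_1A_4})$; these seven pairwise interior-disjoint saddle connections triangulate the pentagon into $(A_1A_2A_3)$, $(A_1A_3A_4)$, $(A_1A_4A_5)$, so $\Gm$ exists. By Lemma \ref{lem:triangulation:correspondence}, $F(\Gm)$ is a triangulation of $(X',\omg';\Sigma')$. The sub-quadrilaterals $(A_1A_2A_3A_4)$ and $(A_1A_3A_4A_5)$ are strictly convex at \emph{every} vertex, even when the pentagon fails to be strictly convex at $A_1$, since the pentagon's interior angle at $A_1$ is split by $\overline{A_1A_3}, \overline{A_1A_4}$ into three strictly positive angles summing to at most $\pi$. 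Applying Lemma \ref{lem:quadri:diagonals} to each produces strictly convex admissible quadrilaterals $(B_1'B_2'B_3'B_4')$ and $(C_1'C_3'C_4'C_5')$ in $(X',\omg';\Sigma')$ together with admissible maps $\i_1', \i_2'$, satisfying $F\circ\i(\overline{A_1A_3})=\i_1'(\overline{B_1'B_3'})$, $F\circ\i(\overline{A_2A_4})=\i_1'(\overline{B_2'B_4'})$, $F\circ\i(\overline{A_1A_4})=\i_2'(\overline{C_1'C_4'})$, $F\circ\i(\overline{A_3A_5})=\i_2'(\overline{C_3'C_5'})$, and such that the sides of both image quadrilaterals (under $\i_1'$ and $\i_2'$) lie in $F(\Gm)$.

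Next, observe that the two output quadrilaterals share a common triangle: the one whose image in $(X',\omg';\Sigma')$ is bounded by $F\circ\i(\overline{A_1A_3}), F\circ\i(\overline{A_3A_4}), F\circ\i(\overline{A_4A_1})$, appearing as $(B_1'B_3'B_4')\subset(B_1'B_2'B_3'B_4')$ in the first and as $(C_1'C_3'C_4')\subset(C_1'C_3'C_4'C_5')$ in the second. Glue them in $\R^2$ by a rigid motion identifying $B_1'\leftrightarrow C_1', B_3'\leftrightarrow C_3', B_4'\leftrightarrow C_4'$, and relabel these common vertices as $A_1', A_3', A_4'$, with $A_2':=B_2'$ and $A_5':=C_5'$ placed on opposite sides of the shared triangle. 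This produces a polygon $(A_1'A_2'A_3'A_4'A_5')\subset\R^2$ together with an admissible map $\i'$ assembled from $\i_1'$ and $\i_2'$. By construction, four of the five pentagon diagonals are already matched by $\i'$, and each side of the new pentagon is a side of one of the two sub-quadrilaterals, hence lies in $F(\Gm)$ --- giving part (iii). For the last diagonal $\overline{A_2A_5}$, apply Lemma \ref{lem:quadri:diagonals} to the strictly convex sub-quadrilateral $(A_2A_3A_4A_5)$, extended via $A_1$ to the pentagon $(A_2A_3A_4A_5A_1)$ whose diagonal $\overline{A_1A_3}$ crosses the closing side $\overline{A_5A_2}$; the additional clause of the lemma forces $F\circ\i(\overline{A_2A_5})$ to be a side of the image quadrilateral, and by uniqueness of admissible maps up to isometry of $\R^2$, this image quadrilateral coincides with the sub-quadrilateral $(A_2'A_3'A_4'A_5')$ of the constructed pentagon, giving $F\circ\i(\overline{A_2A_5})=\i'(\overline{A_2'A_5'})$ and completing part (ii).

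Strict convexity at $A_2'$ and $A_5'$ is inherited from the strict convexity of $(B_1'B_2'B_3'B_4')$ at $B_2'$ and of $(C_1'C_3'C_4'C_5')$ at $C_5'$. For case (i), strict convexity at $A_1'$ is obtained by a fourth application of Lemma \ref{lem:quadri:diagonals} to the sub-quadrilateral $(A_5A_1A_2A_3)$, which is strictly convex precisely when the original pentagon is strictly convex at $A_1$. The main obstacle is strict convexity at the junction vertices $A_3'$ and $A_4'$: the interior angle of the glued pentagon at each of these is the sum of two triangle angles from the two separately-constructed pieces, and a priori this sum could reach or exceed $\pi$ --- in which case the gluing would produce a self-overlapping polygon instead of a convex pentagon. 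Establishing the strict inequality requires transferring the strict convexity of $(A_2A_3A_4A_5)$ at $A_3, A_4$ to $(X',\omg';\Sigma')$ via the admissibility of the combined $\i'$, using the third sub-quadrilateral as the bridge. This geometric compatibility of the gluing is the crux of the lemma.
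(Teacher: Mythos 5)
Your overall architecture is close to the paper's (the paper also builds the image pentagon out of the image of $(A_1A_2A_3A_4)$ under Lemma \ref{lem:quadri:diagonals} plus a piece attached along $F\circ\i(\overline{A_1A_4})$, and it also invokes the quadrilateral $(A_1A_3A_4A_5)$), but your central gluing step has a genuine gap. You assert that the two output quadrilaterals ``share a common triangle: the one whose image in $(X',\omg';\Sigma')$ is bounded by $F\circ\i(\overline{A_1A_3})$, $F\circ\i(\overline{A_3A_4})$, $F\circ\i(\overline{A_4A_1})$.'' But Lemma \ref{lem:quadri:diagonals} only identifies the images of the two \emph{diagonals} and tells you the four sides of $(B_1'B_2'B_3'B_4')$ lie in $F(\Gm)$; it does not give $F\circ\i(\overline{A_3A_4})=\i_1'(\overline{B_3'B_4'})$ or $F\circ\i(\overline{A_4A_1})=\i_1'(\overline{B_4'B_1'})$. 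In particular you do not know that $F\circ\i(\overline{A_1A_3})$, $F\circ\i(\overline{A_3A_4})$, $F\circ\i(\overline{A_1A_4})$ bound a triangle on $(X',\omg';\Sigma')$ at all --- that is essentially Theorem \ref{thm:triangle}, the statement this whole section is building toward, so invoking it here is circular. Without that, the triangle $(B_1'B_3'B_4')$ (a complementary triangle of $F(\Gm)$ adjacent to $F\circ\i(\overline{A_1A_3})$) and the triangle $(C_1'C_3'C_4')$ (adjacent to $F\circ\i(\overline{A_1A_4})$) need not coincide, and the rigid-motion gluing is undefined. The paper sidesteps this by first using the ``additional clause'' of Lemma \ref{lem:quadri:diagonals} (via the pentagon diagonal $\overline{A_5A_2}$) to force $F\circ\i(\overline{A_1A_4})$ to be a \emph{side} of $(B_1'B_2'B_3'B_4')$, and then attaching the complementary triangle $\Delta'$ of the single triangulation $F(\Gm)$ lying on the other side of that edge; since everything is a union of complementary triangles of one triangulation, the union is automatically embedded and admissible.

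The second gap you flag yourself: you never establish admissibility of the glued $\i'$ nor strict convexity at the junction vertices, calling it ``the crux of the lemma'' and leaving it unresolved. The paper resolves it by using the diagonal $\overline{A_3A_5}$: since $\i(\overline{A_3A_5})$ meets exactly $\i(\overline{A_2A_4})$ and $\i(\overline{A_1A_4})$ among $\Gm$, its image $F\circ\i(\overline{A_3A_5})$ meets exactly $\i'(\overline{B_2'B_4'})$ and $\i'(\overline{B_1'B_2'})$ among $F(\Gm)$, so it develops to the straight segment $\overline{B_3'B_5'}$ crossing the shared edge; the existence of this straight saddle connection through the junction is what forces the assembled pentagon to be strictly convex at $B_1'$ and $B_2'$ (your $A_3'$, $A_4'$). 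A similar identification argument (the image of $\overline{A_2A_5}$ meets $\i'(\overline{A_1'A_3'})$ and $\i'(\overline{A_1'A_4'})$ and nothing else in $F(\Gm)$, so it cannot be any of the three pentagon sides lying in $F(\Gm)$) is also needed to finish your last-diagonal step, where ``uniqueness of admissible maps up to isometry'' is again asserted rather than proved.
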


\begin{proof}

   Let $\Gm$ be a triangulation of $(X,\omg;\Sigma)$ which contains $\i(\overline{A_2A_4})$, $\i(\overline{A_1A_4})$ and $\i(\overline{A_{i-1}A_{i}})$ for all $i=1,2,3,4,5$, where $A_{0}=A_5$. Then by Lemma \ref{lem:triangulation:correspondence},  $F(\Gm)$ is a triangulation of $(X',\omg';\Sigma')$ which contains $F\circ\i(\overline{A_2A_4})$,  $F\circ\i(\overline{A_1A_4})$  and $F\circ\i(\overline{A_{i-1}A_{i}})$ for all $i=1,2,3,4,5$, where $A_{0}=A_5$.

  Consider the admissible quadrilateral $(A_1A_2A_3A_4)$. By Lemma
   \ref{lem:quadri:diagonals}, there exists a strictly convex quadrilateral $(B_1'B_2'B_3'B_4')$, and an admissible map $\J:(B_1'B_2'B_3'B_4')\to(X',\omg';\Sigma')$ (see Figure \ref{Fig:pentagon:preserving}), such that
  \begin{itemize}
    \item $F\circ\i(\overline{A_1A_3})=\J(\overline{B_1'B_3'})$ and $F\circ\i(\overline{A_2A_4})=\J(\overline{B_2'B_4'})$;
    \item $F\circ\i(\overline{A_1A_4})=\i'(\overline{B_{i-1}'B_{i}'})$ for some   $i\in\{1,2,3,4\}$,  say $\overline{B_1'B_2'}$;
     \item $\i'(\overline{B_{i-1}'B_{i}'})\in F(\Gm)$ for each  $i=1,2,3,4$, where $B_{0}'=B_4'$;
  \end{itemize}


  Next, we consider the admissible pentagon $(A_1A_2A_3A_4A_5)$.
  Let $\Delta'$ be the triangle in $(X',\omg';\Sigma')\backslash F(\Gm)$  which contains $\i'(\overline{B_1'B_2'})$ in the boundary and  which is not contained in $\i'(B_1'B_2'B_3'B_4')$. Then we can admissibly extend $\i'$ to the pentagon $(B_1'B_4'B_3'B_2'B_5')$ such that \begin{itemize}
    \item $\i'(B_1'B_2'B_5')=\Delta'$;
    \item $F\circ\i(\overline{A_3A_5})=\J(\overline{B_3'B_5'})$, $F\circ\i(\overline{A_1A_4})=\J(\overline{B_1'B_2'})$;
    \item $\J(\overline{B_5'B_2'}),\J(\overline{B_5'B_1'})\in F(\Gm)$,
  \end{itemize}
  where for the second claim we use the fact that $\i(\overline{A_3A_5})$ intersects both $\i(\overline{A_2A_4})$ and $\i(\overline{A_1A_4})$ while disjoint from other saddle connections in $F(\Gamma)$, for the third claim we apply Lemma \ref{lem:quadri:diagonals} for the convex quadrilateral $(A_1A_3A_4A_5)$.

  In particular, $(B_1'B_4'B_3'B_2'B_5')$ is strictly convex at $B_5',B_2',B_3',B_4'$.
  By relabeling $A_1'=B_1',A_2'=B_4',A_3'=B_3',A_4'=B_2',A_5'=B_5'$ (see Figure \ref{Fig:pentagon:preserving}(b)(d)), we get the desired admissible map $\J:(A_1'A_2'A_3'A_4'A_5')\to (X',\omg';\Sigma')$.

  If in addition, $(A_1A_2A_3A_4A_5)$ is also strictly convex at $A_1$, then $F\circ\i(\overline{A_2A_5})$ intersects only $\J(\overline{A_1'A_3'})$ and $\J(\overline{A_1'A_4'})$ and disjoint with any other saddle connections in  $\J(\inter(A_1'A_2'A_3'A_4'A_5'))$. Therefore,  $(A_1'A_2'A_3'A_4'A_5')$ is also strictly convex at $A_1'$ and  $F\circ\i(\overline{A_2A_5})=\J(\overline{A_2'A_5'})$.
\end{proof}



 Next, we deal with the sides of $(A_1A_2A_3A_4A_5)$.
  For each diagonal  $\overline{A_{i-1}A_{i+1}}$, let
 \begin{equation*}
   \begin{array}{rl}
   &\mathcal{D}(\overline{A_{i-1}A_{i+1}})\\
   :=&\max \{\d(A_i,\overline{A_{i-1}A_{i+1}}),\d(A_{i-2},\overline{A_{i-1}A_{i+1}}),
        \d(A_{i+2},\overline{A_{i-1}A_{i+1}})\}
   \end{array}
 \end{equation*}
where $\d(\cdot,\cdot)$ represents the Euclidean distance on $\R^2$, and where $A_{-1}=A_4$, $A_0=A_5$, $A_6=A_1$, and $A_7=A_2$.
\begin{lemma}[Pentagon lemma II]\label{lem:pentagon:side1}
     Let $\i:(A_1A_2A_3A_4 A_5)\to (X,\omg;\Sigma)$  and    $\i':(A'_1A_2'A_3'A_4'A_5')\to (X',\omg';\Sigma')$ be as in  Lemma \ref{lem:pentagon:diagonals}. Suppose further that $(A_1A_2A_3A_4 A_5)$ is also strictly convex at $A_1$.
  \begin{enumerate}
    \item[\textup{(i)}] If $\d(A_3,\overline{A_{2}A_{4}})=\mathcal{D}(\overline{A_{2}A_{4}})$, then $$F(\{\i(\overline{A_2A_{3}}),\i(\overline{A_3A_{4}})\})=
        \{\i'(\overline{A_2'A_{3}'}),\i'(\overline{A_3'A_{4}'})\}.$$
     \item[\textup{(ii)}]If  $\d(A_{5},\overline{A_{2}A_{4}})=
        \d(A_{1},\overline{A_{2}A_{4}})=\mathcal{D}(\overline{A_{2}A_{4}})>
        \d(A_3,\overline{A_{2}A_{4}})$ and \\
        $\i(A_1A_2A_4A_5)$ is  contained in a cylinder $\C$ whose boundary contains $\i(\overline{A_2A_4})$,then
        $F\circ\i(\overline{A_{1}A_{2}})=\i'(\overline{A_{1}'A_{2}'}),$
        $F\circ\i(\overline{A_{4}A_{5}})=\i'(\overline{A_{4}'A_{5}'})$.
    \item[\textup{(iii)}]  If  $\d(A_{1},\overline{A_{2}A_{4}})=\mathcal{D}(\overline{A_{2}A_{4}})$ and $\i(A_1A_2A_4A_5)$ is not contained in any cylinder whose boundary contains $\i(\overline{A_2A_4})$, then \\ $F\circ\i(\overline{A_{1}A_{2}})=\i'(\overline{A_{1}'A_{2}'}).$
    \item[\textup{(iv)}]If  $\d(A_{5},\overline{A_{2}A_{4}})=\mathcal{D}(\overline{A_{2}A_{4}})$ and $\i(A_1A_2A_4A_5)$ is not contained in any cylinder whose boundary contains $\i(\overline{A_2A_4})$, then \\ $F\circ\i(\overline{A_{4}A_{5}})=\i'(\overline{A_{4}'A_{5}'}).$
  \end{enumerate}
  Similar results also hold for $\overline{A_3A_5}$, $\overline{A_1A_3}$, $\overline{A_1A_4}$, and $\overline{A_2A_5}$.
\end{lemma}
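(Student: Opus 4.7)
My plan is to handle each of the four cases by extending the admissible pentagon $(A_1A_2A_3A_4A_5)$ to an admissible hexagon $(A_1A_2A_3A_6A_4A_5)$ (or an appropriate re-ordering) via the strip extension Lemma~\ref{lem:emdedding:strip}, then applying Pentagon Lemma~I (Lemma~\ref{lem:pentagon:diagonals}) to a sub-pentagon of the hexagon in which the target side -- $\overline{A_2A_3}$, $\overline{A_3A_4}$, $\overline{A_1A_2}$, or $\overline{A_4A_5}$ -- has been promoted from a side to a diagonal. Once that edge is a diagonal of a strictly convex sub-pentagon, Pentagon Lemma~I together with Lemma~\ref{lem:triangulation:correspondence} pins down its image under $F$, and we read off the claimed side correspondence. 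The extremality condition $\d(A_\ast,\overline{A_2A_4}) = \mathcal{D}(\overline{A_2A_4})$, combined with the cylinder/non-cylinder hypothesis, tells us on which side of $\overline{A_2A_4}$ the new vertex $A_6$ can be placed while keeping the needed sub-pentagon strictly convex at every vertex.

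Concretely, for case (i), I first apply $\GL(2,\R)$ to make $\overline{A_2A_4}$ horizontal with $A_3$ below, so the maximality of $\d(A_3,\overline{A_2A_4})$ places $A_1,A_5$ strictly within the horizontal strip of height $\d(A_3,\overline{A_2A_4})$ above $\overline{A_2A_4}$. Applying Lemma~\ref{lem:emdedding:strip} to the admissible triangle $(A_2A_3A_4)$ then yields a new vertex $A_6$ below $A_3$, producing an admissible hexagon $(A_1A_2A_3A_6A_4A_5)$. In the strictly convex sub-pentagon $(A_1A_2A_3A_6A_4)$, the original side $\overline{A_3A_4}$ is now a diagonal, so Pentagon Lemma~I identifies $F\circ\i(\overline{A_3A_4})$ with one of the sides of $(A_1'A_2'A_3'A_4'A_5')$ adjacent to the vertex corresponding to $A_3$. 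Applying the same argument to the sub-pentagon $(A_2A_3A_6A_4A_5)$ handles $\overline{A_2A_3}$, yielding the set equality in~(i). Cases (iii) and (iv) are analogous but the extension goes upward: the non-cylinder hypothesis is exactly what the final clause of Lemma~\ref{lem:emdedding:strip} needs to place $A_6$ in the open upper strip, and the maximality of $\d(A_1,\overline{A_2A_4})$ (resp.\ $\d(A_5,\overline{A_2A_4})$) ensures that the relevant sub-pentagon is strictly convex and has $\overline{A_1A_2}$ (resp.\ $\overline{A_4A_5}$) as a diagonal; Pentagon Lemma~I then closes the case.

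Case (ii) is different because the cylinder hypothesis precludes any such strip extension above $\overline{A_2A_4}$. Instead, the cylinder $\C$ containing $\i(A_1A_2A_4A_5)$ with $\i(\overline{A_2A_4})$ on one boundary, combined with $\d(A_1,\overline{A_2A_4})=\d(A_5,\overline{A_2A_4})$, forces $\i(\overline{A_1A_5})$ onto the opposite boundary of $\C$, so $\i(\overline{A_1A_2})$ and $\i(\overline{A_4A_5})$ become the two boundary-to-boundary cross-cuts of $\C$. I plan to adapt the argument of Proposition~\ref{lem:simplecylinder} -- which describes how simple cylinders transfer under $F$ via tracking interior saddle connections that cross both boundaries -- to this possibly non-simple setting, in order to identify the two cross-cuts individually. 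The statement for the other diagonals $\overline{A_3A_5}, \overline{A_1A_3}, \overline{A_1A_4}, \overline{A_2A_5}$ follows by relabeling symmetry. The main obstacle will be cases (i), (iii), (iv): verifying that the sub-pentagon in the extended hexagon is strictly convex at every vertex required by Pentagon Lemma~I, and ruling out degenerate configurations where $A_6$ coincides with (or is collinear with) existing vertices; for case (ii) the chief difficulty is extending the simple-cylinder argument to a cylinder $\C$ that may have multiple saddle connections on each boundary.
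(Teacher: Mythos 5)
Your overall strategy --- use the maximality of $\d(A_\ast,\overline{A_2A_4})$ to strip-extend the pentagon to a hexagon in which the target side becomes a diagonal, then pin down its $F$-image --- is indeed the skeleton of the paper's proof (which uses Lemma~\ref{lem:twostrips} applied to the whole pentagon rather than Lemma~\ref{lem:emdedding:strip} applied to the triangle $(A_2A_3A_4)$ alone; the latter does not by itself guarantee the new triangle is compatible with the rest of the pentagon, which is exactly what the two-strip hypothesis $d_1\geq d_2$ encodes). But there is a genuine gap at the step ``Pentagon Lemma I applied to the sub-pentagon identifies $F\circ\i(\overline{A_3A_4})$ with a side of $(A_1'A_2'A_3'A_4'A_5')$.'' Lemma~\ref{lem:pentagon:diagonals} applied to a new sub-pentagon produces a \emph{new} primed pentagon; nothing in its statement ties that pentagon's diagonals to the sides of the original $(A_1'\cdots A_5')$. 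Establishing that link is the actual content of the paper's argument: one fixes a triangulation $\Gm$ containing all relevant saddle connections, observes that the new diagonal created by the extension (e.g.\ $\overline{A_6A_4}$ in case (i), $\overline{A_8A_4}$, $\overline{A_9A_2}$, $\overline{A_{10}A_5}$ later) has a distinctive intersection/disjointness pattern against $\Gm$, transports that pattern through $F(\Gm)$, and only then --- together with the Triangle Lemma and, in (iii)--(iv), a contradiction argument that also extends $\i'$ on the target surface --- excludes the wrong candidates. Without this your conclusion in (i), and the eliminations in (iii) and (iv), do not follow.

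The second, larger gap is case (ii). Your plan to ``adapt the argument of Proposition~\ref{lem:simplecylinder}'' to a non-simple cylinder is not workable as stated: that proposition's counting of saddle connections crossing the diagonal relies on each boundary component being a single saddle connection, which fails here. (Also, the cylinder hypothesis does not preclude the strip extension above $\overline{A_2A_4}$; it only forces the new vertex onto the boundary of the strip, so the resulting sub-pentagon fails to be strictly convex at one vertex, which is why the conclusion weakens to three candidates rather than one.) The paper's proof of (ii) proceeds quite differently: after reducing to $F\circ\i(\overline{A_1A_2})\in\{\i'(\overline{A_4'A_5'}),\i'(\overline{A_5'A_1'}),\i'(\overline{A_1'A_2'})\}$, it splits on $|\overline{A_5A_1}|\lessgtr|\overline{A_2A_4}|$, extends $\i$ \emph{into} the cylinder $\C$ on one side or first identifies $F\circ\i(\overline{A_5A_1})=\i'(\overline{A_5'A_1'})$ on the other, mirrors these extensions on $(X',\omg';\Sigma')$, and derives contradictions from intersection mismatches. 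None of this machinery is present in your proposal, so case (ii) must be regarded as unproved.
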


\begin{proof}
{The overall strategy is to admissibly extend $\i$ and $\i'$ to hexagons by using Lemma \ref{lem:extension1} and Lemma \ref{lem:twostrips}.}
  Since $(A_1A_2A_3A_4A_5)$ is strictly convex at each vertex,
 by relabeling  the vertices of $(A_1A_2A_3A_4A_5)$, we see that the cases of $\overline{A_3A_5}$, $\overline{A_1A_3}$, $\overline{A_1A_4}$, and $\overline{A_2A_5}$ are all equivalent to the case of $\overline{A_2A_4}$. In the following, we prove the case of $\overline{A_2A_4}$.

 (i). By Lemma \ref{lem:twostrips}, we see that $\i$ can be admissibly extend a hexagon $(A_1A_2A_6A_3A_4A_5)$ such that $(A_2A_6A_3A_4)$ is a strictly convex quadrilateral (see Figure \ref{Fig:pentagon:side1:234}(a)). Consider $(A_1A_2A_3A_4)$.
 It  follows from Lemma \ref{lem:quadri:diagonals} that
 \begin{equation}\label{eq:23:1234}
   F\circ\i(\overline{A_2A_3})\in\{\i'(\overline{A_1'A_2'}),\i'(\overline{A_2'A_3'}),
   \i'(\overline{A_3'A_4'}),\i'(\overline{A_4'A_1'})\}.
 \end{equation}
 Notice that $\i(\overline{A_6A_4})$ intersects $\i(\overline{A_2A_3})$ while disjoint from $\i(\overline{A_1A_4})$ and $\i(\overline{A_2A_5})$. Correspondingly, $F\circ \i(\overline{A_6A_4})$ intersects $F\circ\i(\overline{A_2A_3})$ while disjoint from $\i(\overline{A_1'A_4'})$ and $\i(\overline{A_2'A_5'})$. Therefore, (\ref{eq:23:1234}) is reduced to
\begin{equation}\label{eq:23:234}
   F\circ\i(\overline{A_2A_3})\in\{\i'(\overline{A_2'A_3'}),
   \i'(\overline{A_3'A_4'})\}.
 \end{equation}
 Similarly, we have
 \begin{equation}\label{eq:34:1234}
   F\circ\i(\overline{A_3A_4})\in\{\i'(\overline{A_2'A_3'}),
   \i'(\overline{A_3'A_4'})\}.
 \end{equation}
  Since $\i(\overline{A_2A_3})\neq\i(\overline{A_3A_4})$ by Lemma \ref{lem:triangle}, it then follows that
  \begin{equation*}
     F(\{\i(\overline{A_2A_3}),\i(\overline{A_3A_4})\})=\{\i'(\overline{A_2'A_3'}),
   \i'(\overline{A_3'A_4'})\}.
  \end{equation*}

   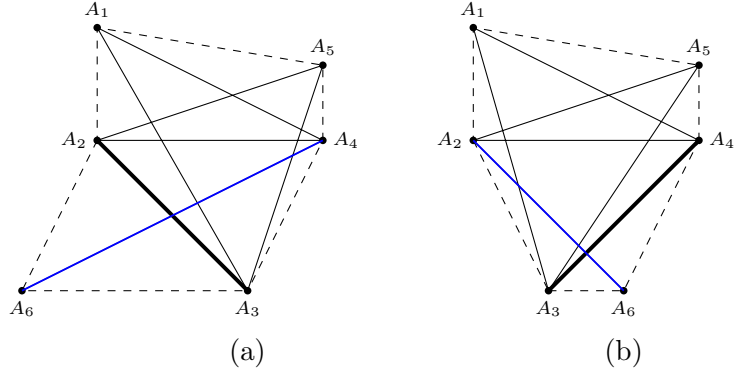
\begin{figure}
   \centering
   \begin{tikzpicture}
     \path[fill,black]
     (0,0.5)coordinate(a5) circle(0.05) node[above]{\tiny$A_1$}
     (3,0)coordinate(a4) circle(0.05) node[above]{\tiny$A_5$}
     (3,-1)coordinate(a3) circle(0.05) node[right]{\tiny$A_4$}
     (2,-3)coordinate(a2) circle(0.05) node[below]{\tiny$A_3$}
     (0,-1)coordinate(a1) circle(0.05) node[left]{\tiny$A_2$}
     (-1,-3)coordinate(a6) circle(0.05) node[below]{\tiny$A_6$};
     \draw[dashed] (a1)--(a2)--(a3)--(a4)--(a5)--cycle  (a1)--(a6)--(a2);
      \draw(a1)--(a3)   (a3)--(a5)--(a2)--(a4)
       (a6)--(a3)  (a4)--(a1);
       \draw[line width=1.5pt] (a1)--(a2) ;
       \draw[line width=0.7pt,blue]  (a3)--(a6);
      \draw (2,-3.5)node[below]{(a)};

        \path[fill,black]
     (0+5,0.5)coordinate(a5) circle(0.05) node[above]{\tiny$A_1$}
     (3+5,0)coordinate(a4) circle(0.05) node[above]{\tiny$A_5$}
     (3+5,-1)coordinate(a3) circle(0.05) node[right]{\tiny$A_4$}
     (1+5,-3)coordinate(a2) circle(0.05) node[below]{\tiny$A_3$}
     (0+5,-1)coordinate(a1) circle(0.05) node[left]{\tiny$A_2$}
     (2+5,-3)coordinate(a6) circle(0.05) node[below]{\tiny$A_6$};
     \draw[dashed] (a1)--(a2)--(a3)--(a4)--(a5)--cycle  (a2)--(a6)--(a3);
      \draw(a1)--(a3)   (a3)--(a5)--(a2)--(a4)
        (a4)--(a1) (a6)--(a1);
       \draw[line width=1.5pt] (a2)--(a3) ;
       \draw[line width=0.7pt,blue]  (a6)--(a1);
      \draw (2+5,-3.5)node[below]{(b)};
   \end{tikzpicture}
   \caption{Identifying edges of strictly convex pentagons I: $\mathrm{d}
   (A_3,\overline{A_{2}A_{4}})=
   \mathcal{D}(\overline{A_{2}A_{4}})$}\label{Fig:pentagon:side1:234}
 \end{figure}
  \vskip5pt
  (ii). If  $\i(\overline{A_1A_2})= \i(\overline{A_4A_5})$, then $\mathcal{C}$ is a simple cylinder. It then follows from Lemma \ref{lem:simplecylinder} and Lemma \ref{lem:pentagon:diagonals} that $F\circ\i(\overline{A_{1}A_{2}})=\i'(\overline{A_{1}'A_{2}'})$ and
  $F\circ\i(\overline{A_{4}A_{5}})=\i'(\overline{A_{4}'A_{5}'})$.
  In the following, we assume that       $\i(\overline{A_1A_2})\neq \i(\overline{A_4A_5})$.
  By Lemma \ref{lem:twostrips}, we can admissibly extend $\i$ to   a hexagon $(A_1A_7A_2A_3A_4A_5)$ such that $(A_1A_7A_2A_4A_5)$ is a  convex pentagon which is strictly convex at $A_7$.  It then follows from Lemma  \ref{lem:quadri:diagonals} that
  \begin{equation}\label{eq:12:1245}
    F\circ\i(\overline{A_1A_2})\in\{\i'(\overline{A_4'A_5'}), \i'(\overline{A_5'A_1'}),
 \i'(\overline{A_1'A_2'}), \i'(\overline{A_2'A_4'})\}.
  \end{equation}
  Since $F\circ\i(\overline{A_2A_4})=\i'(\overline{A_2'A_4'})$, it then follows from Lemma \ref{lem:triangle} that $ F\circ\i(\overline{A_1A_2})\neq \i'(\overline{A_2'A_4'})$. Then (\ref{eq:12:1245}) is reduced to
   \begin{equation}\label{eq:12:12451}
    F\circ\i(\overline{A_1A_2})\in\{\i'(\overline{A_4'A_5'}), \i'(\overline{A_5'A_1'}),
 \i'(\overline{A_1'A_2'})\}.
  \end{equation}

  Let us consider $\overline{A_5A_1}$. There are two subcases: $|\overline{A_5A_1}|>|\overline{A_2A_4}|$ and $|\overline{A_5A_1}|\leq|\overline{A_2A_4}|$.

    {\textbf{Subcase 1.} $|\overline{A_5A_1}|\leq|\overline{A_2A_4}|$.}
     It follows that $\C$ is not a semisimple cylinder which contains $\i(\overline{A_5A_1})$ as a simple boundary component. Therefore, we can admissibly extend $\i|_{(A_1A_2A_4A_5)}$ to a convex pentagon $(A_1A_8A_2A_4A_5)$, (see Figure \ref{Fig:pentagon:side1}(a)),  such that
      \begin{itemize}
        \item $(A_1A_8A_2A_4A_5)$  is strictly convex at $A_2$, and
        \item $\i(A_1A_8A_2A_4A_5)\subset\C$.
      \end{itemize}
      Recall that $\d(A_{5},\overline{A_{2}A_{4}})=
        \d(A_{1},\overline{A_{2}A_{4}})=\mathcal{D}(\overline{A_{2}A_{4}})>
        \d(A_3,\overline{A_{2}A_{4}})$. Then $\i(\inter(A_2A_3A_4))\cap \C=\emptyset$.
        Consequently, $\i:(A_1A_8A_2A_3A_4A_5)\to(X,\omg;\Sigma)$ is admissible (by Lemma \ref{lem:extension1}).
   \begin{itemize}
     \item {\textbf{Subcase 1a.} $F\circ\i(\overline{A_1A_2})=\i'(\overline{A_5'A_1'})$.}
          Let $\Gm$ be a triangulation of $(X,\omg;\Sigma)$ which contains $\i(\overline{A_1A_2})$, $\i(\overline{A_1A_4})$, $\i(\overline{A_2A_4})$, and
     the images of sides of $(A_1A_8A_2A_3A_4A_5)$ by $\i$. Then by Lemma \ref{lem:pentagon:diagonals}, we can also admissibly extend $\i'|_{(A'_1A'_2A'_4A'_5)}$ to a  convex pentagon   ${(A'_1A'_2A'_4A'_5A'_8)}$ which is strictly convex at $A_5'$.
      (see Figure \ref{Fig:pentagon:side1}(b)) such that
  \begin{itemize}
    \item $F\circ\i(\overline{A_8A_4})=
        \i'(\overline{A_8'A_4'})$, and
    \item  $\i'(\overline{A_1'A_8'}), \i'(\overline{A_5'A_8'})\in F(\Gm)$,
  \end{itemize}
   where we use the fact that $\i(\overline{A_4A_8})$ intersects both $\i(\overline{A_1A_2})$ and $\i(\overline{A_2A_5})$ while disjoint from any other saddle connections in $\Gm$ for the first claim.
  Together with   Lemma \ref{lem:extension1} and Lemma \ref{lem:pentagon:diagonals}, this implies  that $\i':(A_1'A_2'A_3'A_4'A_5'A_8')\to(X',\omg';\Sigma')$  is also admissible.
        Then
         $\i'(\overline{A_{8}'A_4'})$ and $\i'(\overline{A_1'A_3'})$ are disjoint on $(X',\omg';\Sigma')$, which contradicts to that $\i(\overline{A_{8}A_4})$ intersects $\i(\overline{A_1A_3})$ on $(X,\omg;\Sigma)$.

     \item{\textbf{Subcase 1b.} $F\circ\i(\overline{A_1A_2})=\i'(\overline{A_4'A_5'})$.} Similarly as in Subcase 1a, we can admissibly extend $\i'$ to a hexagon $(A_1'A_2'A_3'A_4'A_9'A_{5}')$, (see Figure \ref{Fig:pentagon:side1}(c)), such that  $F\circ \i(\overline{A_{8}A_4})=\i'(\overline{A_{9}'A_1'})$. Then
         $\i'(\overline{A_{9}'A_1'})$ and $\i'(\overline{A_1'A_3'})$ are disjoint on $(X',\omg';\Sigma')$, which contradicts to that $\i(\overline{A_{8}A_4})$ intersects $\i(\overline{A_1A_3})$ on $(X,\omg;\Sigma)$.
   \end{itemize}
  Consequently, $F\circ\i(\overline{A_1A_2})=\i'(\overline{A_1'A_2'})$ by (\ref{eq:12:12451}).

\begin{figure}
   \centering
   \begin{tikzpicture}
     \path[fill,black]
     (0.5,0)coordinate(a5) circle(0.05) node[above]{\tiny$A_1$}
     (3,0)coordinate(a4) circle(0.05) node[above]{\tiny$A_5$}
     (3,-2)coordinate(a3) circle(0.05) node[right]{\tiny$A_4$}
     (2,-3)coordinate(a2) circle(0.05) node[below]{\tiny$A_3$}
     (0,-2)coordinate(a1) circle(0.05) node[below]{\tiny$A_2$}
     (-1,0)coordinate(a6) circle(0.05) node[above]{\tiny$A_8$};
     \draw[dashed] (a1)--(a2)--(a3)--(a4)--(a5)--cycle (a1)--(a6)--(a5);
     \draw (a1)--(a3)   (a3)--(a5)--(a2)--(a4)
        (a6)--(a3)  (a4)--(a1);
       \draw[line width=1.5pt] (a1)--(a5) ;
       \draw[line width=0.7pt,blue]  (a3)--(a6);
      \draw (2,-3.5)node[below]{(a)};

        \path[fill,black]
     (0+5,0)coordinate(a5) circle(0.05) node[left]{\tiny $A_1'$}
     (3.5+5,0)coordinate(a4) circle(0.05) node[right]{\tiny$A_5'$　}
     (3+5,-2)coordinate(a3) circle(0.05) node[right]{\tiny$A_4'$ }
     (2+5,-3)coordinate(a2) circle(0.05) node[below]{\tiny$A_3'$}
     (0.3+5,-2)coordinate(a1) circle(0.05) node[left]{\tiny{$A_2'$}}
     (1.5+5,1)coordinate(a6) circle(0.05) node[above]{\tiny{$A_8'$}};
     \draw[dashed] (a1)--(a2)--(a3)--(a4)--(a5)--cycle (a5)--(a6)--(a4);
      \draw (a1)--(a3)  (a2)--(a4) (a3)--(a5) (a4)--(a1) (a5)--(a2)
       (a1)--(a6) (a6)--(a3)  ;
       \draw[line width=1.5pt] (a4)--(a5) ;
       \draw[line width=0.7pt,blue]  (a3)--(a6);
      \draw (2+5,-3.5)node[below]{(b)};

       \path[fill,black]
     (0-1,0-5-1)coordinate(a5) circle(0.05) node[above]{\tiny$A_1'$}
     (3-1,0-5-1)coordinate(a4) circle(0.05) node[above]{\tiny$A_5'$}
     (3-1,-2-5-1)coordinate(a3) circle(0.05) node[below]{\tiny$A_4'$}
     (2-1,-3-5-1)coordinate(a2) circle(0.05) node[below]{\tiny$A_3'$}
     (0-1,-2-5-1)coordinate(a1) circle(0.05) node[below]{\tiny$A_2'$}
     (4-1,0-5-3)coordinate(a6) circle(0.05) node[below]{\tiny$A_9'$};
     \draw[dashed] (a1)--(a2)--(a3)--(a4)--(a5)--cycle (a3)--(a6)--(a4);
     \draw (a1)--(a3)   (a3)--(a5)--(a2)--(a4)
       (a1)--(a6)  (a4)--(a1);
       \draw[line width=1.5pt] (a3)--(a4) ;
       \draw[line width=0.7pt,blue]  (a5)--(a6);
      \draw (2,-3.5-5-1)node[below]{(c)};

        \path[fill,black]
     (0+5,0-6)coordinate(a5) circle(0.05) node[left]{\tiny $A_1$}
     (3.5+5,0-6)coordinate(a4) circle(0.05) node[right]{\tiny$A_5$}
     (3+5,-2-6)coordinate(a3) circle(0.05) node[right]{\tiny$A_4$}
     (2+5,-3-6)coordinate(a2) circle(0.05) node[below]{\tiny$A_3$}
     (0.3+5,-2-6)coordinate(a1) circle(0.05) node[left]{\tiny$A_2$}
     (1.5+5,1-6)coordinate(a6) circle(0.05) node[above]{\tiny$A_9$};
     \draw[dashed] (a1)--(a2)--(a3)--(a4)--(a5)--cycle (a5)--(a6)--(a4);
      \draw (a1)--(a3)  (a2)--(a4) (a3)--(a5) (a4)--(a1) (a5)--(a2)
       (a1)--(a6) (a6)--(a3)  ;
       \draw[line width=1.5pt] (a4)--(a5) ;
       \draw  (a3)--(a6);
      \draw (2+5,-3.5-6)node[below]{(d)};


             \path[fill,black]
     (0,0-6-6)coordinate(a5) circle(0.05) node[above]{\tiny$A_1$}
     (3,0-6-6)coordinate(a4) circle(0.05) node[above]{\tiny$A_5$}
     (3,-2-6-6)coordinate(a3) circle(0.05) node[right]{\tiny$A_4$}
     (2,-3-6-6)coordinate(a2) circle(0.05) node[below]{\tiny$A_3$}
     (0.5,-2-6-6)coordinate(a1) circle(0.05) node[below]{\tiny$A_2$}
     (-1,-2-6-6)coordinate(a6) circle(0.05) node[below]{\tiny$A_{10}$};
     \draw[dashed] (a1)--(a2)--(a3)--(a4)--(a5)--cycle (a1)--(a6)--(a5);
     \draw (a1)--(a3)   (a3)--(a5)--(a2)--(a4)
         (a4)--(a1);
       \draw[line width=1.5pt] (a1)--(a5) ;
       \draw[line width=0.7pt,blue]  (a4)--(a6);
      \draw (2,-3.5-6-6)node[below]{(e)};

   \end{tikzpicture}
   \caption{Identifying edges of strictly convex pentagons II: $\mathcal{I}(A_1A_2A_4A_5)$ is  contained in a cylinder whose boundary contains $\mathcal{I}(\overline{A_2A_4})$
}\label{Fig:pentagon:side1}
 \end{figure}
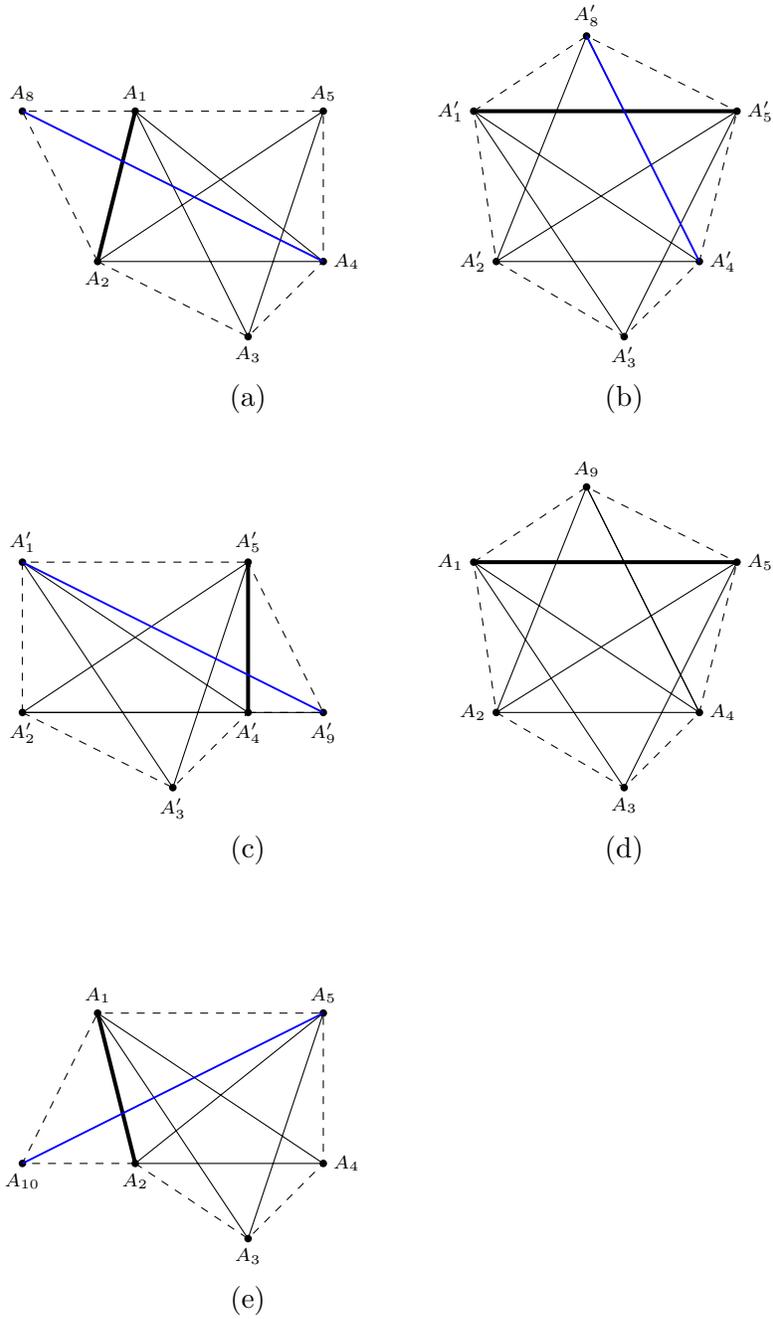

  {\textbf{Subcase 2. $|\overline{A_5A_1}|>|\overline{A_2A_4}|$.}} Consider the strip in the direction of $\overline{A_4A_5}$ which contains $A_1,A_4,A_5$ in the boundary.  It then follows from Lemma \ref{lem:emdedding:strip} that $\i: (A_1A_2A_3A_4A_5)\to(X,\omg;\Sigma)$ can be admissibly extended to a strictly convex hexagon $(A_1A_2A_3A_4A_5A_9)$ (see Figure \ref{Fig:pentagon:side1}(d)). Then $\i(\overline{A_9A_2})$ intersects $\i(\overline{A_1A_4})$ while disjoint from both $\i(\overline{A_3A_5})$ and $\i(\overline{A_2A_4})$. Correspondingly,
   $F\circ\i(\overline{A_9A_2})$ intersects $\i'(\overline{A_1'A_4'})$ while disjoint from both $\i(\overline{A_3'A_5'})$ and $\i(\overline{A_2'A_4'})$. This implies that
      \begin{equation}\label{eq:5112}
         F\circ\i(\overline{A_5A_1})\in \{\i'(\overline{A_5'A_1'}), \i'(\overline{A_1'A_2'})\}.
      \end{equation}
    Similarly, by considering     $\i(\overline{A_9A_4})$ instead of  $\i(\overline{A_9A_2})$, we see that
    \begin{equation}\label{eq:5145}
         F\circ\i(\overline{A_5A_1})\in \{\i'(\overline{A_5'A_1'}), \i'(\overline{A_4'A_5'})\}.
      \end{equation}
      There are two more subcases depending on whether $\i'(\overline{A_1'A_2'})=\i'(\overline{A_4'A_5'})$ or not.
      \begin{itemize}
        \item {\textbf{Subcase 2a.}   $\i'(\overline{A_1'A_2'})=\i'(\overline{A_4'A_5'})$.} Then by Proposition \ref{lem:simplecylinder}, we see that
  $\i(\overline{A_1A_2})= F^{-1}\circ(\i'(\overline{A_1'A_2'}))= F^{-1}\circ(\i'(\overline{A_4'A_5'}))
  = \i(\overline{A_4A_5}).$
  Therefore, $ \i(\overline{A_2A_4})=\i(\overline{A_5A_1})$, which contradicts the assumption that $|\overline{A_5A_1}|>|\overline{A_2A_4}|$.
      \item {\textbf{Subcase 2b.} $\i'(\overline{A_1'A_2'})\neq\i'(\overline{A_4'A_5'})$.} Then
      \begin{equation}\label{eq:5151}
         F\circ\i(\overline{A_5A_1})=\i'(\overline{A_5'A_1'}).
      \end{equation}

      Since $|\overline{A_5A_1}|>|\overline{A_2A_4}|$, it follows that $\C$ is not a semisimple cylinder which contains $\i(\overline{A_2A_4})$ as a simple boundary component. Similarly as in subcase 1a, we can admissibly extend $\i|_{(A_1A_2A_3A_4A_5)}$ to a hexagon $(A_1A_{10}A_2A_3A_4A_5)$ such that $(A_1A_{10}A_2A_4A_5)$ is  convex and strictly convex at $A_1$ (see Figure \ref{Fig:pentagon:side1}(e)).
        Notice that $\i(\overline{A_{10}A_5})$ intersects $\i(\overline{A_1A_4})$ while disjoint from $\i(\overline{A_2A_4})$ and $\i(\overline{A_3A_5})$. Correspondingly, $F\circ\i(\overline{A_{10}A_5})$ intersects $\i(\overline{A_1'A_4'})$ while disjoint from $\i(\overline{A_2'A_4'})$ and $\i(\overline{A_3'A_5'})$.  Therefore, (\ref{eq:12:1245}) is reduced to
         \begin{equation}\label{eq:1251}
        F\circ\i(\overline{A_1A_2})\in\{\i'(\overline{A_1'A_2'}), \i'(\overline{A_5'A_1'})\}.
        \end{equation}
        Combined with (\ref{eq:5151}), this implies that
        $F\circ\i(\overline{A_1A_2})=\i'(\overline{A_1'A_2'})$.
      \end{itemize}

 Similarly, it can be shown that
 $F\circ\i(\overline{A_4A_5})=\i'(\overline{A_4'A_5'})$.
  \vskip 5pt

 (iii).  By Lemma \ref{lem:twostrips},  we can admissibly extend $\i$ to a hexagon $$(A_1A_{11}A_2A_3A_4A_5)$$ such that $(A_1A_{11}A_2A_4A_5)$ is a strictly convex pentagon. The remaining of the proof is similar to that of subcase 1a and subcase 1b in case (ii).

  (iv). By interchanging the labels of $A_2$ and $A_4$,  $A_1$ and $A_5$, we see that this case is equivalent to the case (iii).

 \vskip 5pt

\end{proof}

 If  $(A_1A_2A_3A_4 A_5)$ is not strictly convex at $A_1$, we need to modify Lemma \ref{lem:pentagon:side1}. In this case, by interchanging the labels $A_2$ and $A_5$, $A_3$ and $A_4$, we see that  $\overline{A_1A_3}$ and $\overline{A_1A_4}$ are equivalent, $\overline{A_2A_4}$ and $\overline{A_3A_5}$ are equivalent. But $\overline{A_1A_3}$ and $\overline{A_2A_4}$ are not equivalent in general.

\begin{lemma}[Pentagon lemma III]\label{lem:pentagon:side12}
      Let $\i:(A_1A_2A_3A_4 A_5)\to (X,\omg;\Sigma)$  and    $\i':(A'_1A_2'A_3'A_4'A_5')\to (X',\omg';\Sigma')$ be as in  Lemma \ref{lem:pentagon:diagonals}. Suppose that $(A_1A_2A_3A_4 A_5)$ is not strictly convex at $A_1$.
     \begin{itemize}
       \item ($\overline{A_2A_4}$ and $\overline{A_3A_5}$)
  \begin{enumerate}
     \item[\textup{(i)}] If $\d(A_3,\overline{A_{2}A_{4}})=\mathcal{D}(\overline{A_{2}A_{4}})$, then $$F(\{\i(\overline{A_2A_{3}}),\i(\overline{A_3A_{4}})\})=
        \{\i'(\overline{A_2'A_{3}'}),\i'(\overline{A_3'A_{4}'})\}.$$
      \item[\textup{(ii)}] If  $\d(A_{5},\overline{A_{2}A_{4}})=\mathcal{D}(\overline{A_{2}A_{4}})$, then $$F\circ\i(\overline{A_{5}A_{4}})=\i'(\overline{A_{5}'A_{4}'}).$$
  \end{enumerate}
  Similar results also hold for $\overline{A_3A_5}$.

  \item ($\overline{A_1A_3}$ and $\overline{A_1A_4}$)
  \begin{enumerate}
  \item[\textup{(iii)}] If $\d(A_2,\overline{A_{1}A_{3}})=\mathcal{D}(\overline{A_{1}A_{3}})$, then \\ $F(\{\i(\overline{A_1A_{2}}),\i(\overline{A_2A_{3}})\})=
        \{\i'(\overline{A_1'A_{2}'}),\i'(\overline{A_2'A_{3}'})\}.$
  \item[\textup{(iv)}]  If  $\d(A_5,\overline{A_{1}A_{3}})=\d(A_4,\overline{A_{1}A_{3}})=
         \mathcal{D}(\overline{A_{1}A_{3}})$, then \\ $\{F\circ\i(\overline{A_{3}A_{4}}),F\circ\i(\overline{A_{1}A_{5}})\}\subset
         \{\i'(\overline{A_{3}'A_{4}'}), \i'(\overline{A_{1}'A_{5}'}), \i'(\overline{A_{4}'A_{5}'})\}.$
    \item[\textup{(v)}]
    If  $\d(A_5,\overline{A_{1}A_{3}})=\mathcal{D}(\overline{A_{1}A_{3}})>
    \d(A_4,\overline{A_{1}A_{3}})$,  then \\ $F\circ\i(\overline{A_{1}A_{5}})=\i'(\overline{A_{1}'A_{5}'}).$
    \item[\textup{(vi)}]
     If  $\d(A_4,\overline{A_{1}A_{3}})=\mathcal{D}(\overline{A_{1}A_{3}})>
     \d(A_5,\overline{A_{1}A_{3}})$, then \\ $F\circ\i(\overline{A_{3}A_{4}})=\i'(\overline{A_{3}'A_{4}'}).$
  \end{enumerate}
  Similar results also hold for $\overline{A_1A_4}$.
   \end{itemize}
\end{lemma}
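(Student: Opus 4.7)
The plan is to adapt the strategy of Lemma \ref{lem:pentagon:side1} (Pentagon lemma II), handling the degeneracy at $A_1$ by invoking Lemma \ref{lem:coconvex} to introduce auxiliary vertices that restore strict convexity. By the remark on relabeling preceding the statement, it suffices to prove the cases (i)--(ii) for $\overline{A_2A_4}$ and the cases (iii)--(vi) for $\overline{A_1A_3}$; the cases for $\overline{A_3A_5}$ and $\overline{A_1A_4}$ will follow by interchanging $A_2 \leftrightarrow A_5$ and $A_3 \leftrightarrow A_4$, which preserves the non-strict convexity at $A_1$.

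For parts (i) and (ii), the diagonal $\overline{A_2A_4}$ is not incident to $A_1$, and the constructions in the proof of Lemma \ref{lem:pentagon:side1}(i) and (iv) only use strict convexity at $A_2, A_3, A_4$. Thus for part (i), the hexagon-extension $(A_1A_2A_6A_3A_4A_5)$ provided by Lemma \ref{lem:twostrips} and the subsequent application of Lemma \ref{lem:quadri:diagonals} to $(A_1A_2A_3A_4)$ and $(A_1A_3A_4A_5)$ go through verbatim, yielding $F(\{\i(\overline{A_2A_3}), \i(\overline{A_3A_4})\}) = \{\i'(\overline{A_2'A_3'}), \i'(\overline{A_3'A_4'})\}$. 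For part (ii), I would apply Lemma \ref{lem:coconvex} to the non-strictly-convex quadrilateral $(A_5A_1A_2A_4)$ to produce an admissible map $\hat\i:(A_5\hat A_m \cdots \hat A_1 A_1 A_2 A_4) \to (X,\omg;\Sigma)$ whose image pentagon $(A_5\hat A_m A_2 A_3 A_4)$ (with $A_3$ re-attached) is strictly convex at each vertex; the distance hypothesis $\d(A_5,\overline{A_2A_4}) = \mathcal{D}(\overline{A_2A_4})$ persists for this new pentagon, so Lemma \ref{lem:pentagon:side1}(iv) identifies $F\circ\i(\overline{A_5A_4}) = \i'(\overline{A_5'A_4'})$.

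For parts (iii)--(vi) about $\overline{A_1A_3}$, note that this diagonal splits $(A_1A_2A_3A_4A_5)$ into the triangle $(A_1A_2A_3)$ and the quadrilateral $(A_1A_3A_4A_5)$, which is not strictly convex at $A_1$. Part (iii) concerns only the triangle side: I would mimic Lemma \ref{lem:pentagon:side1}(i) by extending $\i$ via Lemma \ref{lem:twostrips} across $\overline{A_1A_3}$ into a hexagon with the auxiliary vertex opposite $A_2$, and then apply Lemma \ref{lem:quadri:diagonals} twice to conclude that $\{F\circ\i(\overline{A_1A_2}), F\circ\i(\overline{A_2A_3})\} = \{\i'(\overline{A_1'A_2'}), \i'(\overline{A_2'A_3'})\}$, using the distance condition $\d(A_2,\overline{A_1A_3}) = \mathcal{D}(\overline{A_1A_3})$ to guarantee the extension exists in the strip. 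For parts (iv)--(vi), I would apply Lemma \ref{lem:coconvex} to the quadrilateral $(A_1A_3A_4A_5)$ to produce admissible maps $\hat{\i}_k:(A_1\hat A_{k-1}\hat A_k A_4 A_5)\to (X,\omg;\Sigma)$, culminating in a strictly convex polygon $(A_1\hat A_m A_3 A_4 A_5)$. The distance hypotheses in (iv)--(vi) translate into hypotheses on the pentagon $(\hat A_m A_3 A_4 A_5 A_1)$, which is strictly convex at each vertex; then Lemma \ref{lem:pentagon:side1} applied to the diagonal $\overline{\hat A_m A_4}$ (or $\overline{\hat A_m A_3}$) yields the desired identifications of $\i'(\overline{A_1'A_5'})$ and/or $\i'(\overline{A_3'A_4'})$.

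The main obstacle is case (iv), where $\d(A_5, \overline{A_1A_3}) = \d(A_4, \overline{A_1A_3}) = \mathcal{D}(\overline{A_1A_3})$: here the symmetric maximum creates an ambiguity because the strictly convex auxiliary pentagon may admit an extension on either side, and moreover the image quadrilateral $\i(A_1A_3A_4A_5)$ could be contained in a cylinder analogous to subcase (ii) of Lemma \ref{lem:pentagon:side1}. In such a degenerate configuration, the three candidate images $\{\i'(\overline{A_3'A_4'}), \i'(\overline{A_1'A_5'}), \i'(\overline{A_4'A_5'})\}$ may not be distinguishable via the triangulation combinatorics, explaining why the conclusion of (iv) only localizes the two sides inside a three-element set rather than identifying them uniquely. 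Verifying that no further element is needed, and that the cylinder subcase of Lemma \ref{lem:pentagon:side1}(ii) reduces cleanly through the auxiliary vertices, will require careful case analysis parallel to Subcases 1a, 1b, 2a, 2b of that earlier proof.
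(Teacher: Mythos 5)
Your treatment of (i) and (iii) follows the paper's route (the paper itself only remarks that these are ``similar to case (i) of Lemma \ref{lem:pentagon:side1}''), and the reduction of $\overline{A_3A_5}$ and $\overline{A_1A_4}$ to $\overline{A_2A_4}$ and $\overline{A_1A_3}$ by relabeling is correct. The substance of the lemma, however, lies in (ii), (v) and (vi), and there your reduction via Lemma \ref{lem:coconvex} has genuine gaps. First, Lemma \ref{lem:coconvex} \emph{replaces} the vertex adjacent to the non-convex one (its output is the quadrilateral $(A_1\hat{A}_m A_3A_4)$, retaining only the triangle $(A_1A_3A_4)$), and its hypothesis puts the non-convex vertex in the first position; applied to $(A_5A_1A_2A_4)$ as written, the non-convex vertex sits in the second position, and in any legitimate application the output no longer contains the objects you must track --- in (vi) both the side $\i(\overline{A_3A_4})$ and the diagonal $\i(\overline{A_1A_3})$ disappear when $A_3$ is replaced. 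Second, the location of $\hat{A}_m$ is dictated by the flat geometry of $(X,\omg;\Sigma)$ (it is where a ray first meets a marked point), so the distance hypotheses do not ``persist'': $\hat{A}_m$ may itself realize $\mathcal{D}$ for the relevant diagonal of the new pentagon, in which case Lemma \ref{lem:pentagon:side1} identifies the wrong pair of sides; you also do not verify the cylinder hypothesis of Lemma \ref{lem:pentagon:side1}(iv). Third, even granting a new strictly convex pentagon, Lemmas \ref{lem:pentagon:diagonals} and \ref{lem:pentagon:side1} applied to it identify its sides with sides of a \emph{new} target pentagon; concluding $F\circ\i(\overline{A_4A_5})=\i'(\overline{A_4'A_5'})$ for the \emph{original} $\i'$ requires an additional matching argument that you omit.

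The deeper point is that the paper's proofs of (ii) and (v) hinge on a fact your proposal never invokes: by Lemma \ref{lem:pentagon:diagonals}(i), the target pentagon $(A_1'A_2'A_3'A_4'A_5')$ is also \emph{not} strictly convex at $A_1'$, and the surviving wrong candidates (e.g.\ $F\circ\i(\overline{A_5A_4})=\i'(\overline{A_5'A_1'})$ in (ii), or $F\circ\i(\overline{A_5A_1})=\i'(\overline{A_4'A_5'})$ in (v)) are eliminated precisely by showing that they would force strict convexity at $A_1'$ (or, pulling back by $F^{-1}$, at $A_1$), a contradiction. Rather than convexifying the source pentagon, the paper extends it to a hexagon via Lemma \ref{lem:twostrips} so that the degenerate quadrilateral is absorbed into a strictly convex sub-pentagon while every original side and diagonal is retained, and then runs intersection-pattern arguments against a common triangulation. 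Your closing acknowledgment that the cylinder subcase and the case-(iv) ambiguity ``require careful case analysis'' leaves exactly these eliminations undone, so the proposal does not yet prove (ii), (v) or (vi).
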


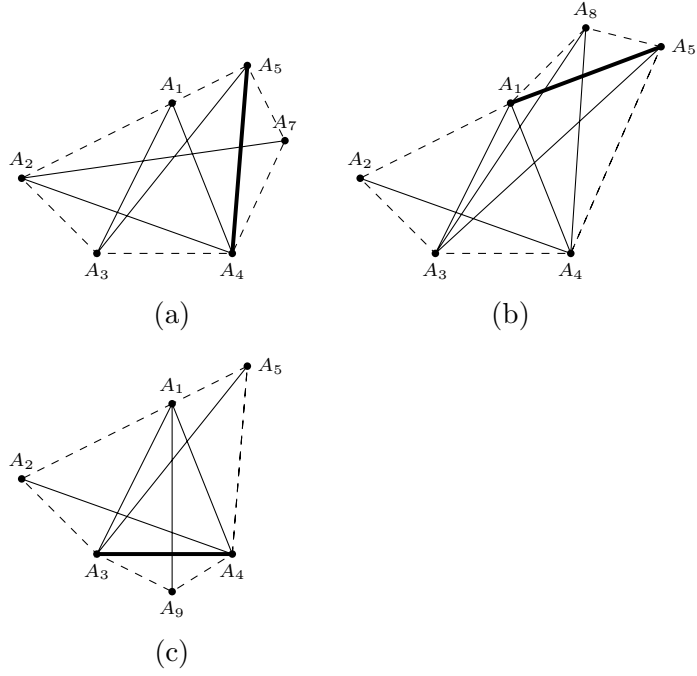
\begin{figure}
   \centering
   \begin{tikzpicture}
     \path[fill,black]
     (0,0)coordinate(a1) circle(0.05) node[above]{\tiny$A_1$}
     (-2,-1)coordinate(a2) circle(0.05) node[above]{\tiny$A_2$}
     (1,0.5)coordinate(a5) circle(0.05) node[right]{\tiny$A_5$}
     (-1,-2)coordinate(a3) circle(0.05) node[below]{\tiny$A_3$}
     (0.8,-2)coordinate(a4) circle(0.05) node[below]{\tiny$A_4$}
     (1.5,-0.5)coordinate(a7) circle(0.05) node[above]{\tiny$A_7$};
     \draw[dashed] (a1)--(a2)--(a3)--(a4)--(a7)--(a5)--cycle (a4)--(a5);
     \draw (a1)--(a3)--(a5) (a1)--(a4)--(a2)--(a7);
       \draw[line width=1.5pt] (a4)--(a5) ;
      \draw (0,-2.5)node[below]{(a)};

        \path[fill,black]
     (0+4.5,0)coordinate(a1) circle(0.05) node[above]{\tiny$A_1$}
     (-2+4.5,-1)coordinate(a2) circle(0.05) node[above]{\tiny$A_2$}
     (1.5+5,0.75)coordinate(a5) circle(0.05) node[right]{\tiny$A_5$}
     (-1+4.5,-2)coordinate(a3) circle(0.05) node[below]{\tiny$A_3$}
     (0.8+4.5,-2)coordinate(a4) circle(0.05) node[below]{\tiny$A_4$}
     (1+4.5,1)coordinate(a8) circle(0.05) node[above]{\tiny$A_8$};
     \draw[dashed] (a1)--(a2)--(a3)--(a4)--(a5)--(a8)--cycle (a4)--(a5);
     \draw (a1)--(a3)--(a5) (a1)--(a4)--(a2)  (a3)--(a8)--(a4);
       \draw[line width=1.5pt] (a1)--(a5) ;
      \draw (0+4.5,-2.5)node[below]{(b)};

      \path[fill,black]
     (0,0-4)coordinate(a1) circle(0.05) node[above]{\tiny$A_1$}
     (-2,-1-4)coordinate(a2) circle(0.05) node[above]{\tiny$A_2$}
     (1,0.5-4)coordinate(a5) circle(0.05) node[right]{\tiny$A_5$}
     (-1,-2-4)coordinate(a3) circle(0.05) node[below]{\tiny$A_3$}
     (0.8,-2-4)coordinate(a4) circle(0.05) node[below]{\tiny$A_4$}
     (0,-2.5-4)coordinate(a9) circle(0.05) node[below]{\tiny$A_9$};
     \draw[dashed] (a1)--(a2)--(a3)--(a9)--(a4)--(a5)--cycle (a4)--(a5);
     \draw (a1)--(a3)--(a5) (a1)--(a4)--(a2)  (a1)--(a9);
       \draw[line width=1.5pt] (a3)--(a4) ;
      \draw (0,-3-4)node[below]{(c)};

   \end{tikzpicture}
   \caption{Identifying edges of non-strictly convex pentagons.}\label{Fig:pentagon:side12}
 \end{figure}

\begin{proof}
   The proofs of cases (i) and (iii) are similar to the proof of case (i) in Lemma \ref{lem:pentagon:side1}.
   It remains to consider (ii), (iv), (v), and (vi).

   \vskip5pt
  (ii). Notice that  $(A'_1A'_2A'_3A'_4A'_5)$ is   strictly convex at $A_2',A_3',A_4',A_5'$. If $(A'_1A'_2A'_3A'_4A'_5)$ is  also strictly convex at $A_1'$, then it is a strictly convex pentagon. It then follows from the first conclusion of Lemma \ref{lem:pentagon:diagonals} that $(A_1A_2A_3A_4A_5)$ is also a strictly convex pentagon, which contradicts the assumption that  $(A_1A_2A_3A_4A_5)$ is not  strictly convex at $A_1$. Consequently, $(A'_1A'_2A'_3A'_4A'_5)$ is  not strictly convex at $A_1'$.

 By Lemma \ref{lem:twostrips}, we can admissibly extend $\i$ to a hexagon $(A_1A_2A_3A_4A_7A_5)$ such that $(A_1A_2A_4A_7A_5)$ is convex and strictly convex at$A_2$, $A_4,A_7, A_5$ (see Figure \ref{Fig:pentagon:side12}(a)). Let $\Gm_1$ be a triangulation of $(X,\omg;\Sigma)$ which contains $\i(\overline{A_2A_4})$, $\i(\overline{A_1A_4})$, $\i(\overline{A_5A_4})$, and the images of all sides of
  $(A_1A_2A_3A_4A_7A_5)$ by $\i$. By Lemma \ref{lem:pentagon:diagonals}, we see that $F(\Gm_1)$ contains $\i(\overline{A_2'A_4'})$, $\i(\overline{A_1'A_4'})$, $F\circ\i(\overline{A_5A_4})$  and all images of sides of $(A_1'A_2'A_3'A_4'A_5')$ by $\i'$. Notice that $\i(\overline{A_2A_7})$ intersects both $\i(\overline{A_1A_4})$ and $\i(\overline{A_5A_4})$ while disjoint from any other saddle connections in $\Gm_1$. Correspondingly, $F\circ\i(\overline{A_2A_7})$ intersects both $\i'(\overline{A_1'A_4'})$ and $F\circ\i(\overline{A_5A_4})$ while disjoint from any other saddle connections in $F(\Gm_1)$. This implies that
  $$ F\circ\i(\overline{A_5A_4})\in\{\i'(\overline{A_5'A_4'}), \i'(\overline{A_5'A_1'}), \i'(\overline{A_1'A_2'})\}. $$
  Since $\i(\overline{A_1A_7})$ intersects $\i(\overline{A_5A_4})$ while disjoint from both $\i(\overline{A_1A_3})$ and $\i(\overline{A_2A_4})$, it follows that $F\circ\i(\overline{A_1A_7})$ intersects $F\circ\i(\overline{A_5A_4})$ while disjoint from both $\i'(\overline{A_1'A_3'})$ and $\i'(\overline{A_2'A_4'})$. This implies that
  $  F\circ\i(\overline{A_5A_4})\neq \i'(\overline{A_1'A_2'}).$
  If $ F\circ\i(\overline{A_5A_4})= \i'(\overline{A_5'A_1'})$, then by Lemma \ref{lem:pentagon:diagonals}, we can admissibly extend $\i'|_{(A_1'A_2'A_4'A_5')}$ to a  pentagon $(A_1'A_2'A_4'A_5'A_7')$ such that $F\circ\i(\overline{A_2A_7})=\i'(\overline{A_2'A_7'})$ (since $\i(\overline{A_2A_7})$ intersects both $\i(\overline{A_1A_4})$ and $\i(\overline{A_5A_4})$ ).
   This implies that $(A_1'A_2'A_4'A_5')$  is strictly convex at $A_1'$, which contradicts to the assumption that  $(A'_1A'_2A'_3A'_4A'_5)$ is also not strictly convex at $A_1'$.
  Consequently, $ F\circ\i(\overline{A_5A_4})= \i'(\overline{A_5'A_4'})$.

  (iv) If $\i(\overline{A_3A_4})=\i(\overline{A_5A_1})$, then the claim follows from Proposition \ref{lem:simplecylinder}. Otherwise, it follows directly from Lemma \ref{lem:twostrips}, Lemma \ref{lem:quadri:diagonals} and Lemma \ref{lem:triangle}.

  (v) It follows from Lemma \ref{lem:twostrips} that we can admissibly extend $\i$ to a hexagon $(A_1A_2A_3A_4A_5A_8)$ such that $(A_1A_3A_4A_5A_8)$ is a strictly pentagon (see Figure \ref{Fig:pentagon:side12}(b)). It then follows Lemma  \ref{lem:quadri:diagonals} and Lemma \ref{lem:triangle} that
  $$ F\circ\i(\overline{A_5A_1})\in\{\i'(\overline{A_3'A_4'}),\i'(\overline{A_4'A_5'}),
  \i'(\overline{  A_5'A_1'})\}. $$
  On the other hand, $\i(\overline{A_4A_8})$ intersects $\i(\overline{A_5A_1})$ while disjoint from $\i(\overline{A_1A_3})$ and $\i(\overline{A_2A_4})$. Therefore,
  $$ F\circ\i(\overline{A_5A_1})\in\{\i'(\overline{A_4'A_5'}),
  \i'(\overline{  A_5'A_1'})\}. $$
  If $F\circ\i(\overline{A_5A_1})=\i'(\overline{A_4'A_5'})$, it follows from Lemma \ref{lem:pentagon:diagonals} and Lemma \ref{lem:extension1} that $\i'$ can be admissibly extended to a hexagon $(A_1'A_2'A_3'A_4'A_8'A_5')$ such that $(A_1'A_3'A_4'A_8'A_5')$ is a strictly convex pentagon. Therefore, $(A_1'A_2'A_3'A_4'A_8'A_5')$ is a convex hexagon strictly convex at $A_2',A_3',A_4',A_8',A_5'$.
   Let $\Gm_2$ be a triangulation of $(X,\omg;\Sigma)$ which contains
   $\i(\overline{A_1A_3})$, $\i(\overline{A_1A_4})$, $\i(\overline{A_1A_5})$ and all images of sides of $(A_1A_2A_3A_4A_5A_8)$ by $\i$. Then by Lemma \ref{lem:triangulation:correspondence}  and  Lemma \ref{lem:pentagon:diagonals},  $F(\Gm_2)$ is a triangulation of $(X',\omg';\Sigma')$ which contains
   $\i'(\overline{A_1'A_3'})$, $\i'(\overline{A_1'A_4'})$, $\i'(\overline{A_4'A_5'})$ and all images of sides of $(A'_1A'_2A'_3A'_4A'_8A_5')$ by $\i'$. Notice that $\i'(\overline{A_2'A_8'})$ intersects each of $\{\i'(\overline{A_1'A_3'}),\i'(\overline{A_1'A_4'}),
   \i'(\overline{A_4'A_5'})\}$ while disjoint from any other saddle connection in $F(\Gm_2)$.
   Therefore, $F^{-1}\circ\i'(\overline{A_2'A_8'})$ intersects each of $\{\i(\overline{A_1A_3}),\i(\overline{A_1A_4}),
   \i(\overline{A_4A_5})\}$ while disjoint from any other saddle connection in $\Gm_2$. This implies that $(A_1A_2A_4A_5A_8)$ is strictly convex at $A_1$. In particular, $(A_1A_2A_4A_5)$ is strictly convex at $A_1$, which contradicts the assumption that $(A_1A_2A)3A_4A_5)$ is not strictly convex at $A_1$.

  (vi) It follows from Lemma \ref{lem:twostrips} that we can admissibly extend $\i$ to a hexagon $(A_1A_2A_3A_9A_4A_5)$ such that $(A_1A_3A_9A_4A_5)$ is a strictly convex pentagon. It then follows Lemma \ref{lem:quadri:diagonals} and Lemma \ref{lem:triangle} that
  $$ F\circ\i(\overline{A_3A_4})\in\{\i'(\overline{A_3'A_4'}),\i'(\overline{A_4'A_5'}),
  \i'(\overline{  A_5'A_1'})\}. $$
  Notice that $\i(\overline{A_1A_9})$ intersects each of $\i(\overline{A_2A_4}),\i(\overline{A_3A_5}),\i(\overline{A_3A_4})$. Therefore,
  $F\circ\i(\overline{A_3A_4})=\i'(\overline{A_3'A_4'})$. Otherwise, $F\circ\i(\overline{A_1A_9})$ would disjoint from $\i'(\overline{A_2'A_4'})$ or $\i'(\overline{A_3'A_5'})$.
\end{proof}

Based on Lemma \ref{lem:pentagon:side1} and Lemma \ref{lem:pentagon:side12}, we now prove the following.
\begin{lemma}[Pentagon lemma IV]\label{lem:pentagon:side2}
  Let $\i:(A_1A_2A_3A_4 A_5)\to (X,\omg;\Sigma)$ and $\i':(A'_1A_2'A_3'A_4'A_5')\to(X',\omg';\Sigma')$ be as  in Lemma \ref{lem:pentagon:diagonals}. Then there exist two adjacent sides of $(A_1A_2A_3A_4A_5)$, say $\overline{A_{k-1}A_k}$ and $\overline{A_kA_{k+1}}$, such that $F\circ\i(\overline{A_{k-1}A_k})=\i'(\overline{A'_{k-1}A'_k})$ and $F\circ\i(\overline{A_{k}A_{k+1}})=\i'(\overline{A'_{k}A'_{k+1}})$.
Moreover, if $(A_1A_2A_3A_4A_5)$ is not strictly convex at $A_1$, then $k$ can be chosen to be different from 1.
\end{lemma}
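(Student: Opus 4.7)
The plan is a pigeonhole / double-counting argument that combines the partial side identifications supplied by the preceding pentagon lemmas. I would first record that the vertex correspondence $F\circ\i(A_i) = \i'(A_i')$ is forced by Lemma \ref{lem:pentagon:diagonals}: each $A_i$ is the common endpoint of the two diagonals $\overline{A_iA_{i\pm 2}}$, whose images under $F\circ\i$ are $\i'(\overline{A_i'A_{i\pm 2}'})$, so intersecting the endpoint sets pins down $F\circ\i(A_i) = \i'(A_i')$. With the vertex correspondence in hand, any set-level identification $F(\{\i(\overline{A_{j-1}A_j}),\i(\overline{A_jA_{j+1}})\}) = \{\i'(\overline{A_{j-1}'A_j'}),\i'(\overline{A_j'A_{j+1}'})\}$ at a shared vertex $A_j$ upgrades to the two pointwise equalities by matching endpoints.

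For the strictly convex case, I would apply Lemma \ref{lem:pentagon:side1} to each of the five diagonals $d_i := \overline{A_{i-1}A_{i+1}}$. If case (i) of that lemma applies to some $d_i$ (i.e.\ the opposite vertex $A_i$ attains $\mathcal D(d_i)$), then the two adjacent sides $\overline{A_{i-1}A_i}, \overline{A_iA_{i+1}}$ are preserved and we take $k=i$. Otherwise, for every $i$ one of cases (ii)--(iv) applies and yields at least one preserved ``far side'' from $\{\overline{A_{i-2}A_{i-1}},\overline{A_{i+1}A_{i+2}}\}$. Let $S$ be the set of preserved sides. A direct enumeration shows each pentagon side is a far side of exactly two of the five diagonals, so counting incidences yields $2|S|\geq 5$, i.e.\ $|S|\geq 3$. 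Since the maximum independent set in the cycle graph $C_5$ (of pentagon sides with adjacency through shared vertices) has size $2$, any three sides must contain two adjacent ones, completing this case.

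For the non-strictly convex case (at $A_1$) only the four diagonals $d_2, d_3, d_4, d_5$ are genuine, and I would apply Lemma \ref{lem:pentagon:side12}. If an easy case triggers --- case (i) for $d_3$ or $d_4$, or case (iii) for $d_2$ or $d_5$ --- we immediately obtain two adjacent preserved sides sharing a vertex $A_k$ with $k\in\{2,3,4,5\}$, hence $k\neq 1$. Otherwise case (ii) must apply to both $d_3$ and $d_4$, yielding $F\circ\i(\overline{A_4A_5})=\i'(\overline{A_4'A_5'})$ and $F\circ\i(\overline{A_2A_3})=\i'(\overline{A_2'A_3'})$. Now inspect $d_2$: one of cases (iv)--(vi) applies. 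Cases (v) and (vi) directly add $\overline{A_1A_5}$ (adjacent to $\overline{A_4A_5}$ at $A_5$, giving $k=5$) or $\overline{A_3A_4}$ (adjacent to $\overline{A_2A_3}$ at $A_3$, giving $k=3$). Case (iv) supplies only the inclusion $\{F\circ\i(\overline{A_3A_4}), F\circ\i(\overline{A_1A_5})\} \subset \{\i'(\overline{A_3'A_4'}), \i'(\overline{A_1'A_5'}), \i'(\overline{A_4'A_5'})\}$, but combining with the already established $F\circ\i(\overline{A_4A_5})=\i'(\overline{A_4'A_5'})$ and injectivity of $F$ reduces this to the set equality $\{F\circ\i(\overline{A_3A_4}), F\circ\i(\overline{A_1A_5})\} = \{\i'(\overline{A_3'A_4'}), \i'(\overline{A_1'A_5'})\}$, and the vertex correspondence then forces the pointwise identification; adjacency at $A_3$ or $A_5$ follows.

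The main obstacle I anticipate is the bookkeeping in the non-strictly convex case: one must verify that, since $A_1$ lies on the segment $\overline{A_5A_2}$, the distance $\d(A_1,d_i)$ never exceeds $\max_{j\in\{i\pm 2\}}\d(A_j,d_i)$ for $i\in\{3,4\}$ (so that cases (i) and (ii) of Lemma \ref{lem:pentagon:side12} truly exhaust the possibilities for $d_3,d_4$), and that case (iv) can always be closed off using the auxiliary identifications from $d_3,d_4$. The strictly convex case is nearly immediate once the pigeonhole count is written down, but the non-strict case requires tracking multiple subcase reductions; the asymmetry between the ``generic'' diagonals $d_3,d_4$ and the ``special'' diagonals $d_2,d_5$ through $A_1$ is exactly what forces the side conclusion $k\neq 1$.
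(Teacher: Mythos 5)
Your overall strategy (combine the partial side identifications of Pentagon Lemmas II and III across several diagonals and extract two adjacent preserved sides) is the same as the paper's, and the pigeonhole count in the strictly convex case is an attractive way to organize the bookkeeping. But there is a genuine gap at the foundation: the claimed ``vertex correspondence'' $F\circ\i(A_i)=\i'(A_i')$ and the resulting upgrade of set-level identifications to pointwise ones. At this stage $F$ is only an abstract isomorphism of saddle connection graphs; it is not induced by any map on points (that is what the whole section is working towards), and it carries no endpoint data. The expression $F\circ\i(A_i)$ is undefined, and the intersection of the endpoint sets of the two diagonals through $A_i$ need not be a single point (on a surface with one marked point every saddle connection has the same endpoint set). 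Consequently, from $F(\{\i(\overline{A_{j-1}A_j}),\i(\overline{A_jA_{j+1}})\})=\{\i'(\overline{A_{j-1}'A_j'}),\i'(\overline{A_j'A_{j+1}'})\}$ you cannot read off which side goes to which. This breaks the very first step of your strictly convex case: Lemma \ref{lem:pentagon:side1}(i) gives only the set-level statement, so ``case (i) applies, hence both adjacent sides are preserved, take $k=i$'' does not follow. The same unjustified upgrade is invoked to close off case (iv) in your non-strictly-convex branch.

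The paper resolves exactly this ambiguity by other means: it enumerates the $3^3$ outcome combinations for three diagonals, and in the combinations where only set-level data is available (e.g.\ (331)--(333), or (22) in the non-convex case) it overlaps two set-level identifications such as $F(\{\gm_2,\gm_3\})=\{\gm_2',\gm_3'\}$ and $F(\{\gm_3,\gm_4\})=\{\gm_3',\gm_4'\}$, observing that a mismatched assignment forces $F(\gm_2)=F(\gm_4)$, hence $\gm_2=\gm_4$ by injectivity, hence a simple cylinder sitting inside the admissible strictly convex pentagon via Proposition \ref{lem:simplecylinder} --- a contradiction. Some such geometric input is unavoidable, and your pigeonhole cannot be repaired by simply discarding the case-(i) diagonals: when $\mathcal{D}(d_i)$ is attained only at the opposite vertex $A_i$, that diagonal contributes no preserved far side, and the incidence bound $2|S|\ge 5$ fails. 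So the proposal needs the paper's (or an equivalent) mechanism for breaking the set-level ambiguity before either branch goes through; the remaining case analysis in your non-convex branch is otherwise close in spirit to the paper's.
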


\begin{table}[h]
   \linespread{1.5}
   \begin{center}
  \begin{tabular}{|c|c|c|c|}
   \hline
    \textbf{Diagonals} & \textbf{Case 1} & \textbf{Case 2} & \textbf{Case 3}\\ [3pt] \hline
    $ \overline{A_2A_4}$ & $F(\gm_1)=\gm_1'$ & $F(\gm_4)=\gm_4'$ &  $F(\{\gm_2,\gm_3\})=\{\gm_2',\gm_3'\}$ \\[3pt] \hline
     $ \overline{A_3A_5}$ & $F(\gm_2)=\gm_2'$ & $F(\gm_5)=\gm_5'$ &  $F(\{\gm_3,\gm_4\})=\{\gm_3',\gm_4'\}$ \\[3pt] \hline
   $ \overline{A_1A_3}$ & $F(\gm_5)=\gm_5'$ & $F(\gm_3)=\gm_3'$ &  $F(\{\gm_1,\gm_2\})=\{\gm_1',\gm_2'\}$ \\ [3pt]  \hline
   $ \overline{A_4A_1}$ & $F(\gm_3)=\gm_3'$ & $F(\gm_1)=\gm_1'$ &  $F(\{\gm_4,\gm_5\})=\{\gm_4',\gm_5'\}$ \\[3pt] \hline
   $ \overline{A_5A_2}$ & $F(\gm_4)=\gm_4'$ & $F(\gm_2)=\gm_2'$ &  $F(\{\gm_1,\gm_5\})=\{\gm_1',\gm_5'\}$ \\[3pt] \hline
  \end{tabular}
 \\ \smallskip (a)
 \vskip 10pt
   \begin{tabular}{l|c||l|c||l|c}
   \noalign{\smallskip}\hline\noalign{\smallskip}
     \multirow{2}*{\textbf{Case}} & \textbf{Sides} & \multirow{2}*{\textbf{Case}} & \textbf{Sides} &  \multirow{2}*{\textbf{Case}} & \textbf{Sides}  \\
     &\textbf{Identified}&&\textbf{Identified}&&\textbf{Identified}\\ \hline
     (111) & $\gm_1,\gm_2,\gm_5$ & (112) & $\gm_1,\gm_2,\gm_3$ &(113) &$\gm_1,\gm_2$\\
     (121) & $\gm_5,\gm_1$ & (122) & $\gm_5,\gm_1,\gm_3$ &(123) &$\gm_5,\gm_1,\gm_2$\\
     (131) & $\gm_5,\gm_1$ & (132) & $\gm_1,\gm_3,\gm_4$ &(133) &$\gm_1,\gm_2$\\
     (211) & $\gm_2,\gm_4,\gm_5$ & (212) & $\gm_2,\gm_3,\gm_4$ &(213) &$\gm_1$,$\gm_2$,$\gm_4$\\
     (221) & $\gm_4,\gm_5$ & (222) & $\gm_3,\gm_4,\gm_5$ &(223) &$\gm_4,\gm_5$\\
     (231) & $\gm_3,\gm_4,\gm_5$ & (232) & $\gm_3,\gm_4$ &(233) &$\gm_3,\gm_4$\\
     (311) & $\gm_2,\gm_3,\gm_5$ & (312) & $\gm_2,\gm_3$ &(313) &$\gm_1,\gm_2,\gm_3$\\
     (321) & $\gm_5$ & (322) & $\gm_2,\gm_3,\gm_5$ &(323) &$\gm_1,\gm_2,\gm_3,\gm_5$\\
     (331) & $\gm_2,\gm_3,\gm_4,\gm_5$ & (332) & $\gm_2,\gm_3,\gm_4$ &(333) &$\gm_1,\gm_2,\gm_3,\gm_4$\\
     \hline
   \end{tabular}
   \\ \smallskip (b)
   \end{center}
   \caption{\small{In table (a), we list all three possibilities for each diagonal. In table (b), we list all 27 possibilities for  $\overline{A_2A_4}$, $\overline{A_3A_5}$ and $\overline{A_1A_3}$, where the triple $(ijk)$
   represents the possibility corresponding to case $i$ for $\overline{A_2A_4}$,
    case $j$ for  $\overline{A_3A_5}$,  and case $k$ for $\overline{A_1A_3}$.}}\label{tab:cases}
 \end{table}
\begin{proof}
For convenience, let $\gm_i:=\i(\overline{A_iA_{i+1}})$, $\gm_i':=\i(\overline{A'_iA'_{i+1}})$, $i=1,2,3,4,5$, where $A_6=A_1$ and $A_6'=A_1'$. $\gm_i$ is said to be \textit{identified} if $F(\gm_i)=\gm_i'$.
The proof will be split into two cases depending on whether $(A_1A_2A_3A_4A_5)$ is strictly convex at $A_1$.

\vskip10pt
\noindent\textbf{Case I: $(A_1A_2A_3A_4A_5)$ is strictly convex at $A_1$. }

Consider the diagonals $\overline{A_2A_4}$, $\overline{A_3A_5}$ and $\overline{A_1A_3}$. By Lemma \ref{lem:pentagon:side1}, there are three cases for each diagonal as listed in Table \ref{tab:cases}(a). Therefore, there are 27 possibilities in total. Let us denote by the triple $(ijk)$ the possibility corresponding to case $i$ for $\overline{A_2A_4}$, case $j$ for  $\overline{A_3A_5}$,  and case $k$ for $\overline{A_1A_3}$. We list all possibilities in Table \ref{tab:cases}(b), where $\gm_i$ is said to be identified by $F$ if $F(\gm_i)=\gm_i'$. (For cases (331), (332) and (333), we have $F(\{\gm_2,\gm_3\})=\{\gm_2',\gm_3'\}$ and $F(\{\gm_3,\gm_4\})=\{\gm_3',\gm_4'\}$.   To show that all of $\gm_2,\gm_3,\gm_4$ are identified, it suffices to show that $\gm_3$ is identified. Suppose to the contrary that $\gm_3$ is not identified, then $F(\gm_2)=F(\gm_4)=\gm_3'$. Therefore $\gm_2=\gm_4$. It then follows from Proposition \ref{lem:simplecylinder} that  $\i'(A_2'A_3'A_4'A_5')$ is a simple cylinder which contains $\gm_3'$ as an interior saddle connection, which contradicts  the fact that $(A_1'A_2'A_3'A_4'A_5')$ is an admissible strictly convex pentagon.)

We see that for each  possibility $(ijk)\neq (321)$, there exists some $m$ such that $F(\gm_m)=\gm_m'$ and $F(\gm_{m+1})=\gm_{m+1}')$, where $\gm_6=\gm_1$ and $\gm_6'=\gm_1'$. To finish the proof of this case,
it remains to consider the possibility $(321)$. Notice that in this possibility, we have
\begin{equation}\label{eq:213}
  F(\gm_5)=\gm_5', F(\{\gm_2,\gm_3\})=\{\gm_2',\gm_3'\}.
\end{equation}  Let us consider one more diagonal $\overline{A_1A_4}$. There are three subcases.
\begin{itemize}
  \item If $F(\gm_3)=\gm_3'$, then by Equation (\ref{eq:213}), we also have $F(\gm_2)=\gm_2'$. The proof completes.
  \item If $F(\gm_1)=\gm_1'$, the proof completes.
  \item If $F(\{\gm_4,\gm_5\})=\{\gm_4',\gm_5'\}$, then by Equation (\ref{eq:213}), we also have $ F(\gm_4)=\gm_4'$. The proof completes.
\end{itemize}


\begin{table}[h]
   \linespread{1.5}
   \begin{center}
  \begin{tabular}{|c|c|c|c|}
   \hline
    \textbf{Diagonals} & \textbf{Case 1} & \textbf{Case 2} \\ [3pt] \hline
    $ \overline{A_2A_4}$ & $F(\gm_4)=\gm_4'$ &$F(\{\gm_2,\gm_3\})=\{\gm_2',\gm_3'\}$  \\[3pt] \hline
     $ \overline{A_3A_5}$ & $F(\gm_2)=\gm_2'$& $F(\{\gm_3,\gm_4\})=\{\gm_3',\gm_4'\}$ \\[3pt] \hline
  \end{tabular}
 \\ \smallskip (a)
  \vskip10pt
   \begin{tabular}{l|c||l|c}
   \noalign{\smallskip}\hline\noalign{\smallskip}
     \multirow{2}*{\textbf{Case}} & \textbf{Sides} & \multirow{2}*{\textbf{Case}} & \textbf{Sides}  \\
     &\textbf{Identified}&&\textbf{Identified}\\ \hline
     (11) & $\gm_2,\gm_4$ & (12) & $\gm_3,\gm_4$ \\
     (21) & $\gm_2,\gm_3$ & (22) & $\gm_2,\gm_3,\gm_4$ \\
     \hline
   \end{tabular}
   \\ \smallskip (b)
   \end{center}
   \caption{\small{In table (a), we list all possibilities for diagonals $\overline{A_2A_4}$ and $\overline{A_3A_5}$. In table (b), we list all four possibilities for  the combination of $\overline{A_2A_4}$, $\overline{A_3A_5}$, where the pair $(ij)$
   represents the possibility corresponding to case $i$ for $\overline{A_2A_4}$ and
    case $j$ for  $\overline{A_3A_5}$.}}\label{tab:cases2}
 \end{table}

\vskip 10pt
\noindent\textbf{Case II: $(A_1A_2A_3A_4A_5)$ is not strictly convex at $A_1$.}

In this case, for each diagonal, there are two or four cases by Lemma \ref{lem:pentagon:side12} (see Table \ref{tab:cases2}(a))
Consider the diagonals $\overline{A_2A_4}$ and  $\overline{A_3A_5}$. There are 4 possibilities in total. Let us denote by the triple $(ij)$ the possibility corresponding to case $i$ for $\overline{A_2A_4}$ and case $j$ for  $\overline{A_3A_5}$. We list all possibilities in Table \ref{tab:cases2}(c).  For  case (22), we have $F(\{\gm_2,\gm_3\})=\{\gm_2',\gm_3'\}$ and $F(\{\gm_3,\gm_4\})=\{\gm_3',\gm_4'\}$. To show that all of $\gm_2,\gm_3,\gm_4$ are identified, it suffices to show that $\gm_3$ is identified. Suppose to the contrary that $\gm_3$ is not identified, then $F(\gm_2)=F(\gm_4)=\gm_3'$. Hence $\d(A_5,\overline{A_2A_4})=\mathcal{D}(\overline{A_2A_4})$. It then follows from the second statement of Lemma \ref{lem:pentagon:side12} that $\gm_4$ is identified. This is a contradiction which proves that   all of $\gm_2,\gm_3,\gm_4$ are identified.

 We see that for each  possibility $(ij)\neq (11)$, there exists some $m$ such that $F(\gm_m)=\gm_m'$ and $F(\gm_{m+1})=\gm_{m+1}')$, where $\gm_6=\gm_1$ and $\gm_6'=\gm_1'$. To finish the proof of this case,
it remains to consider the possibility $(11)$. Notice that in this possibility, we have
\begin{equation}\label{eq:11}
  F(\gm_2)=\gm_2', F(\gm_4)=\gm_4'.
\end{equation}  Let us consider the diagonal $\overline{A_1A_3}$. By Lemma \ref{lem:pentagon:side12},there are four cases.
 \begin{itemize}
  \item[  (\textbf{C1})] $F(\gm_5)=\gm_5'$;
  \item [ (\textbf{C2})] $F(\gm_3)=\gm_3'$; 
  \item[  (\textbf{C3})]$F(\{\gm_1,\gm_2\})=\{\gm_1',\gm_2'\}$; 
  \item[  (\textbf{C4})]$\{F(\gm_3),F(\gm_5)\}\subset\{\gm_3',\gm_4',\gm_5'\}$, $\overline{A_1A_3}$ and $\overline{A_4A_5}$ are parallel.
    \end{itemize}
    If  $F(\gm_3)=\gm_3'$ or $F(\gm_5)=\gm_5'$, then the proof completes. If $F(\{\gm_1,\gm_2\})=\{\gm_1',\gm_2'\}$, then by Equation (\ref{eq:11}), it follows that $ F(\gm_1)=\gm_1'$. The proof also completes. If  $\{F(\gm_3),F(\gm_5)\}\subset\{\gm_3',\gm_4',\gm_5'\}$, together with (\ref{eq:11}), this  reduces to
   \begin{equation*}
  \{F(\gm_3),F(\gm_5)\}=\{\gm_3',\gm_5'\}.
  \end{equation*}
  If $F(\gm_3)=\gm_3'$ and $F(\gm_5)=\gm_5'$, then the proof completes. It remains to consider the case
  \begin{equation}\label{eq:35}
    F(\gm_3)=\gm_5',~F(\gm_5)=\gm_3'.
  \end{equation}

   Similarly, let us consider the   diagonal $\overline{A_1A_4}$. By Lemma \ref{lem:pentagon:side12}, there are four more possibilities.
    \begin{itemize}
  \item[ (\textbf{C5})]  $F(\gm_3)=\gm_3'$;
  \item[ (\textbf{C6})] $F(\gm_1)=\gm_1'$;
  \item[ (\textbf{C7})] $F(\{\gm_4,\gm_5\})=\{\gm_4',\gm_5'\}$;
  \item[ (\textbf{C8})] $\{F(\gm_1),F(\gm_3)\}\subset\{\gm_1',\gm_2',\gm_3'\}$, $\overline{A_1A_4}$ and $\overline{A_2A_3}$ are parallel.
  \end{itemize}
 If  $F(\gm_3)=\gm_3'$, $F(\gm_1)=\gm_1'$, or $F(\{\gm_4,\gm_5\})=\{\gm_4',\gm_5'\}$, then the proof completes as discussed above. If $\{F(\gm_1),F(\gm_3)\}\subset\{\gm_1',\gm_2',\gm_3'\}$, together with (\ref{eq:11}), this reduces to
   \begin{equation*}
  \{F(\gm_1),F(\gm_3)\}=\{\gm_1',\gm_3'\}.
  \end{equation*}
 If $F(\gm_3)=\gm_3'$ and $F(\gm_1)=\gm_1'$, the proof completes. It remains to consider the case
 \begin{equation}\label{eq:13}
   F(\gm_1)=\gm_3',~F(\gm_3)=\gm_1'.
 \end{equation}

 Next, let us consider the case  where both (\textbf{C4}) and (\textbf{C8}) hold.  In this case, we have $\gm_5=F^{-1}(\gm_3')=\gm_1$.  Recall that $\overline{A_1A_3}$ and $\overline{A_4A_5}$ are parallel for   (\textbf{C4}),  and that  $\overline{A_1A_4}$ and $\overline{A_2A_3}$ are parallel for  (\textbf{C8}). Then $$\d(A_2,\overline{A_{1}A_{3}})=\d(A_5,\overline{A_{1}A_{3}})
 =\mathcal{D}(\overline{A_{1}A_{3}}), $$
which reduces this case to case (\textbf{C3}). The proof completes.

\end{proof}

\subsection{Quadrilaterals}

   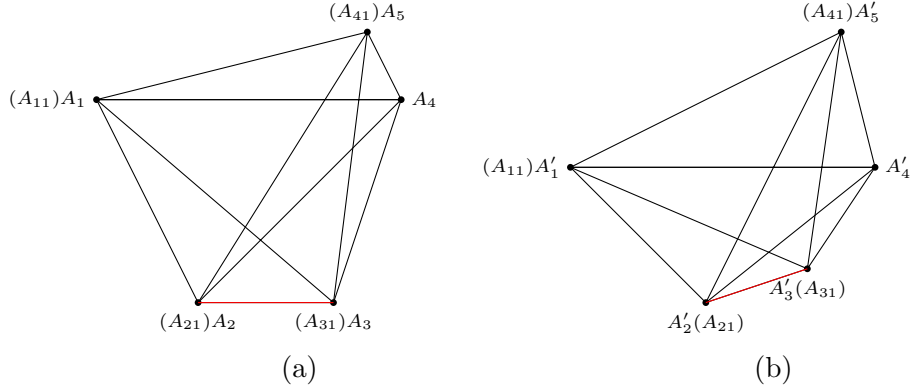
\begin{figure}[t]
  \begin{tikzpicture}[scale=0.9]


       \path[fill,black]
   (3+2-9,-3-6) coordinate(c2) circle(0.05) node[left]{\tiny $(A_{11})A_1$}
   (6+3-9,-3-6+1) coordinate(c1) circle(0.05)node[above]{\tiny $(A_{41})A_5$}
   (9-9.5,-6-6) coordinate(c4) circle(0.05)node[below]{\tiny $(A_{31})A_3$}
   (7.5+2-9,-3-6) coordinate(c5) circle(0.05)node[right]{\tiny $A_4$}
   (4.5+2-9,-5-1-6) coordinate(c3) circle(0.05)node[below]{\tiny $(A_{21})A_2$};
   \draw (c1)--(c2)--(c3)--(c4)--(c5)--cycle;
   \draw (c3)--(c1)--(c4) (c2)--(c5);
   \draw(c2)--(c4) (c3)--(c5)  (c2)--(c5);
   \draw[red](c4)--(c3);
   \draw
          (-3+2,-7-6)node{(a)};


           \path[fill,black]
   (3+2-9+8-1,-3-6-1) coordinate(c2) circle(0.05) node[left]{\tiny ($A_{11}$)$A'_1$}
   (6+3-9+8-1,-3-6+1) coordinate(c1) circle(0.05)node[above]{\tiny ($A_{41}$)$A'_5$}
   (9-9.5+8-1,-6-5.5) coordinate(c4) circle(0.05)node[below]{\tiny $A'_3$($A_{31}$)}
   (7.5+2-9+8-1,-3-6-1) coordinate(c5) circle(0.05)node[right]{\tiny $A'_4$}
   (4.5+2.5-9+8-1,-5-1-6) coordinate(c3) circle(0.05)node[below]{\tiny $A'_2$($A_{21}$)};
   \draw (c1)--(c2)--(c3)--(c4)--(c5)--cycle;
   \draw (c3)--(c1)--(c4) (c2)--(c5);
   \draw(c2)--(c4) (c3)--(c5)  (c2)--(c5);
   \draw[red](c4)--(c3);
   \draw
          (-3+2+8-1,-7-6)node{(b)};
  \end{tikzpicture}
  \caption{ Caption to Figure 10:
Identifying edges of strictly convex quadrilaterals.}\label{Fig:quadriletaral:34}
\end{figure}

 A direct consequence of Lemma \ref{lem:pentagon:side2} is that we can improve Lemma \ref{lem:quadri:diagonals}.
\begin{lemma}\label{cor:3to4}
  Let $\i:(A_1A_2A_3A_4)\to(X,\omg;\Sigma)$ and $\i':(A'_1A_2'A_3'A_4')\to (X',\omg';\Sigma')$ be two admissible maps, where $(A_1A_2A_3A_4)$ and $(A'_1A_2'A_3'A_4')$ are strictly convex quadrilaterals. Suppose that
  \begin{itemize}
    \item  $F\circ\i(\overline{A_1A_3})= \i'(\overline{A_1'A_3'})$,
    $F\circ\i(\overline{A_2A_4})= \i'(\overline{A_2'A_4'})$,
    \item $ F\circ\i(\overline{A_1A_2})= \i'(\overline{A_1'A_2'})$,
    $ F\circ\i(\overline{A_1A_4})=\i'(\overline{A_1'A_4'})$,
 \\  $F\circ\i(\overline{A_3A_4})=\i'(\overline{A_3'A_4'}) $.
  \end{itemize}
  Then $F\circ\i(\overline{A_2A_3})=\i'(\overline{A_2'A_3'})$.
\end{lemma}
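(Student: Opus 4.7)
My plan is to extend the quadrilateral to a strictly convex pentagon and then invoke the pentagon-preservation lemmas from the previous section. First I would admissibly extend $\i:(A_1A_2A_3A_4)\to(X,\omg;\Sigma)$ to a strictly convex pentagon $\i:(A_1A_2A_3A_5A_4)\to(X,\omg;\Sigma)$ by inserting a new vertex $A_5$ between $A_3$ and $A_4$, using Lemma~\ref{lem:emdedding:strip} after a suitable $\GL(2,\R)$-rotation that places $\overline{A_3A_4}$ on the boundary of a horizontal strip (with cylinder-containment cases handled separately via Proposition~\ref{lem:simplecylinder} and Lemma~\ref{lem:pentagon:cylinder}). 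In the extended pentagon, the quadrilateral side $\overline{A_3A_4}$ becomes a pentagon diagonal, while the three other quadrilateral sides $\overline{A_1A_2},\overline{A_2A_3},\overline{A_4A_1}$ remain pentagon sides and both quadrilateral diagonals $\overline{A_1A_3},\overline{A_2A_4}$ remain pentagon diagonals.

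Next I would apply Lemma~\ref{lem:pentagon:diagonals} to this pentagon, producing an admissible strictly convex pentagon $\mathcal{K}':(B_1B_2B_3B_4B_5)\to(X',\omg';\Sigma')$ in which $F$ matches all five pentagon diagonals. Three of these diagonal correspondences coincide with known quadrilateral saddle connections by the hypotheses of Lemma~\ref{cor:3to4}, giving the saddle-connection equalities $\mathcal{K}'(\overline{B_1B_3})=\i'(\overline{A_1'A_3'})$, $\mathcal{K}'(\overline{B_2B_5})=\i'(\overline{A_2'A_4'})$, and $\mathcal{K}'(\overline{B_3B_5})=\i'(\overline{A_3'A_4'})$. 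Matching endpoints across these forces, in the generic case where $\i'(A_1'),\i'(A_2'),\i'(A_3'),\i'(A_4')$ are distinct marked points, the identifications $B_i=A_i'$ for $i\in\{1,2,3\}$ and $B_5=A_4'$ as marked points in $X'$.

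Then I would invoke Lemma~\ref{lem:pentagon:side2} to obtain an adjacent pair of pentagon sides corresponding under $F$ to pentagon sides of $\mathcal{K}'$. Combining this with the hypothesized quadrilateral-side correspondences $F\circ\i(\overline{A_1A_2})=\i'(\overline{A_1'A_2'})$ and $F\circ\i(\overline{A_4A_1})=\i'(\overline{A_4'A_1'})$ (which are pentagon sides), together with the endpoint identifications above, a short case analysis on the possible adjacent pairs forces both $\mathcal{K}'(\overline{B_1B_2})=\i'(\overline{A_1'A_2'})$ and $F\circ\i(\overline{A_2A_3})=\mathcal{K}'(\overline{B_2B_3})$. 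At this point, the admissibly embedded triangle $(B_1B_2B_3)$ in the image pentagon shares two sides at the common vertex $B_1=A_1'$ with the admissibly embedded triangle $(A_1'A_2'A_3')$ cut out of the image quadrilateral, namely the diagonal $\i'(\overline{A_1'A_3'})$ and the side $\i'(\overline{A_1'A_2'})$. By uniqueness of an admissibly embedded triangle with two prescribed sides at a common vertex, the two triangles coincide, so $\mathcal{K}'(\overline{B_2B_3})=\i'(\overline{A_2'A_3'})$, and we conclude $F\circ\i(\overline{A_2A_3})=\i'(\overline{A_2'A_3'})$.

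The hard part will be the identification step: executing the case analysis on which adjacent pair of pentagon sides Lemma~\ref{lem:pentagon:side2} actually produces in order to pin down both $\mathcal{K}'(\overline{B_1B_2})$ and the correspondence for $\overline{A_2A_3}$, then using the shared two sides of the triangles at $A_1'$ to invoke the uniqueness statement. Non-generic configurations---coincidences among the marked points $\i'(A_i')$, or the image of the pentagon being contained in a cylinder---must be dealt with separately using Proposition~\ref{lem:simplecylinder} and the cylinder-specific results of Section~\ref{sec:admissible:pentagon}.
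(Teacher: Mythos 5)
There is a genuine gap, in two places. First, your opening extension step is not always available: to insert a vertex $A_5$ between $A_3$ and $A_4$ and obtain a strictly convex pentagon $(A_1A_2A_3A_5A_4)$ via Lemma~\ref{lem:emdedding:strip}, you need a direction in which the entire quadrilateral lies in the closed strip bounded by the two parallel lines through $A_3$ and $A_4$; such a direction exists only when the interior angles satisfy $\theta_3+\theta_4<\pi$, and a strictly convex quadrilateral can perfectly well have $\theta_3+\theta_4\ge\pi$. The paper avoids this by splitting on $\theta_2+\theta_3$: if $\theta_2+\theta_3\le\pi$ it extends across $\overline{A_2A_3}$ itself and concludes directly from Lemma~\ref{lem:quadri:diagonals} and Lemma~\ref{lem:triangle}; otherwise $\theta_1+\theta_4<\pi$ and it extends across $\overline{A_4A_1}$, which is always legitimate in that case.

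Second, and more seriously, even granting your pentagon $(A_1A_2A_3A_5A_4)$, Lemma~\ref{lem:pentagon:side2} only asserts the \emph{existence} of some adjacent pair of identified sides. In your pentagon the two sides already identified by hypothesis, $\overline{A_4A_1}$ and $\overline{A_1A_2}$, are themselves adjacent, so the lemma's conclusion is already witnessed by data you started with and yields no new information about $\overline{A_2A_3}$; the ``short case analysis'' you defer to does not exist. This is precisely why the paper places the new vertex between $A_4$ and $A_1$: there the known identified sides $\overline{A_1A_2}$ and $\overline{A_3A_4}$ are non-adjacent, so Lemma~\ref{lem:pentagon:side2} forces one of $\overline{A_2A_3}$, $\overline{A_4A_5}$, $\overline{A_5A_1}$ to be identified. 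Even then one application does not suffice: if the newly identified side is $\overline{A_5A_1}$ or $\overline{A_4A_5}$, the paper passes to a smaller strictly convex quadrilateral satisfying the same hypotheses and iterates, and the substantive content of the proof is the termination argument for this iteration (the angles at $A_2$ and $A_3$ decrease while staying bounded below, which bounds the lengths of the saddle connections produced, and only finitely many saddle connections lie below a given length). Your proposal contains nothing corresponding to this iteration or its termination, and the closing appeal to ``uniqueness of an admissibly embedded triangle with two prescribed sides at a common vertex'' is not a lemma available in the paper, nor is the endpoint-matching argument that precedes it (saddle connections may share endpoints in many ways, and the paper's arguments are deliberately phrased in terms of intersection patterns instead).
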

\begin{proof}
 If the image  of $(A_1A_2A_3A_4)$ by $\i$ is a simple cylinder, then the lemma follows from Proposition \ref{lem:simplecylinder}. In the following, we assume that the image  of $(A_1A_2A_3A_4)$ by $\i$ is not a simple cylinder. In particular, $\i(\overline{A_1A_2})\neq  \i(\overline{A_3A_4})$ and $\i(\overline{A_1A_4})\neq  \i(\overline{A_2A_3})$.

  Let $\theta_i$ be the interior angle of $(A_1A_2A_3A_4)$ at $A_i$, $i=1,2,3,4$.
 If $\theta_2+\theta_3\leq\pi$, it then  follows from  Lemma \ref{lem:emdedding:strip} and Lemma \ref{lem:quadri:diagonals} that
 $F\circ\i(\overline{A_2A_3})=\i'(\overline{A'_{i-1}A_{i}'})$ for some $i=1,2,3,4$, where $A'_0=A_4'$. Since
 $\i(\overline{A_1A_4})\neq  \i(\overline{A_2A_3})$, it follows that
 $F\circ\i(\overline{A_2A_3})\neq \i'(\overline{A'_1A_{4}'})$. It then follows from Lemma \ref{lem:triangle} that $F\circ\i(\overline{A_2A_3})= \i'(\overline{A'_2A_{3}'})$.

   \vskip 5pt
   Suppose that $\theta_2+\theta_3>\pi$. Consider the algorithm below.
   \begin{quotation}

       Since  $\theta_2+\theta_3>\pi$, then  $\theta_1+\theta_4<\pi$. By Lemma \ref{lem:emdedding:strip}, we can admissibly extend $\i$ to a strictly convex pentagon $(A_1A_2A_3A_4A_5)$.
         It then follows from Lemma \ref{lem:pentagon:diagonals} that we can also admissibly extend $\i'$ to a strictly convex pentagon $(A_1'A_2'A_3'A_4'A_5')$ such that
         \begin{equation*}
            F\circ\i(\overline{A_2A_5})=\i'(\overline{A'_2A'_5}),
            ~F\circ\i(\overline{A_3A_5})=\i'(\overline{A'_3A'_5}).
         \end{equation*}
         By Lemma \ref{lem:pentagon:side2}, there exists
         $$\overline{A_mA_n}\in\{\overline{A_1A_5},\overline{A_4A_5}, \overline{A_2A_3}\}$$
         such that $ F\circ\i(\overline{A_mA_n})=\i'(\overline{A'_mA'_n})$.
         \begin{itemize}
           \item If $ F\circ\i(\overline{A_2A_3})=\i'(\overline{A'_2A'_3})$. The algorithm terminates.
           \item Otherwise, we may assume that
           $ F\circ\i(\overline{A_1A_5})=\i'(\overline{A'_1A'_5})$. Let us relabel the quadrilaterals $(A_1A_2A_3A_5)$ and $(A'_1A'_2A'_3A'_5)$ by setting $A_{11}=A_1,A_{21}=A_2,A_{31}=A_3,A_{41}=A_5$ and $A'_{11}=A'_1,A'_{21}=A'_2,A'_{31}=A'_3,A'_{41}=A'_5$. Then
           \begin{eqnarray*}
           & \i(\overline{A_{21}A_{31}})=\i(\overline{A_{2}A_{3}}),&
           \i(\overline{A'_{21}A'_{31}})=\i'(\overline{A'_{2}A'_{3}})\\
        &    F\circ\i(\overline{A_{11}A_{31}})= \i'(\overline{A_{11}'A_{31}'}),&
    F\circ\i(\overline{A_{21}A_{41}})= \i'(\overline{A_{21}'A_{41}'}),\\
    &    F\circ\i(\overline{A_{11}A_{21}})= \i'(\overline{A_{11}'A_{21}'}),&
    F\circ\i(\overline{A_{11}A_{41}})= \i'(\overline{A_{11}'A_{41}'}),\\
   & &    F\circ\i(\overline{A_{31}A_{41}})= \i'(\overline{A_{31}'A_{41}'}).
           \end{eqnarray*}
            Let $\theta_{21}$ and $\theta_{31}$ be respectively the interior angles of
           $(A_{11}A_{21}A_{31}A_{41})$ at $A_{21}$ and $A_{31}$. Then
           \begin{eqnarray*}\label{eq:theta23}
           0<\theta_{21}\leq \theta_2,~0<\theta_{31}< \theta_3\\
            0<\theta_{21}+\theta_{31}<\theta_2+\theta_3<2\pi.
           \end{eqnarray*}
            If $\theta_{21}+\theta_{31}\leq \pi$, the algorithm terminates. Otherwise, we repeat the construction above for $(A_{11}A_{21}A_{31}A_{41})$.
         \end{itemize}
  \end{quotation}

  We now show that the algorithm will terminate after finitely many steps. Suppose to the contrary that the algorithm will never stop. Then in each step, we construct two admissible maps $\i:(A_{1i}A_{2i}A_{3i}A_{4i})\to(X,\omg;\Sigma)$ and
  $\i':(A'_{1i}A'_{2i}A'_{3i}A'_{4i})\to(X',\omg';\Sigma')$  such that
  \begin{eqnarray*}
           & \i(\overline{A_{2i}A_{3i}})=\i(\overline{A_{2}A_{3}}),&
           \i(\overline{A'_{2i}A'_{3i}})=\i'(\overline{A'_{2}A'_{3}})\\
        &    F\circ\i(\overline{A_{1i}A_{3i}})= \i'(\overline{A_{1i}'A_{3i}'}),&
    F\circ\i(\overline{A_{2i}A_{4i}})= \i'(\overline{A_{2i}'A_{4i}'}),\\
    &    F\circ\i(\overline{A_{1i}A_{2i}})= \i'(\overline{A_{1i}'A_{2i}'}),&
    F\circ\i(\overline{A_{1i}A_{4i}})= \i'(\overline{A_{1i}'A_{4i}'}),\\
   & &    F\circ\i(\overline{A_{3i}A_{4i}})= \i'(\overline{A_{3i}'A_{4i}'}).
           \end{eqnarray*}
           and
          \begin{eqnarray}
           0<\theta_{2,i+1}\leq \theta_{2i}\leq\theta_2,~0<\theta_{3,i+1}\leq \theta_{3i}\leq \theta_2 \label{eq:23i1}\\
            \pi<\theta_{2,i+1}+\theta_{3,i+1}<\theta_{2i}+\theta_{3i}<2\pi,\label{eq:23i2}
           \end{eqnarray}
           where for the third  inequality, we use  the observation that either $\theta_{2,i+1}< \theta_{2i}$ or $\theta_{3,i+1}< \theta_{3i}$ for each $i\geq1$.
           Consequently, either $\{\theta_{2i}\}_{i\geq1}$ or $\{\theta_{3i}\}_{i\geq1}$ has a strictly decreasing subsequence, say  $\{\theta_{2i}\}_{i\geq1}$. For convenience, we also denote by $\{\theta_{2i}\}_{i\geq1}$ the decreasing subsequence. In particular, the corresponding saddle connections $\{\i(\overline{A_{1i}A_{2i}})\}_{i\geq1}$ are pairwise different. On the other hand, it follows from (\ref{eq:23i1}) and (\ref{eq:23i2}) that
          \begin{equation}\label{eq:theta23i}
            \pi-\theta_3<\theta_{2i}\leq \theta_2,~
            \pi-\theta_2<\theta_{3i}\leq \theta_3.
          \end{equation}
          Therefore,
          $$\i( |\overline{A_{1i}A_{2i}}|)= |\overline{A_{1i}A_{2i}}|=
           \frac{2\mathbf{Area}(A_{1i}A_{2i}A_{3i})}
           { |\overline{A_{2i}A_{3i}}|\sin\theta_{2i}}
           \leq  \frac{2\mathbf{Area}(X,\omg;\Sigma)}
           {|\overline{A_{2}A_{3}}|\min\{\sin\theta_2,\sin\theta_3\}}:=\mathbf{L}.$$
         Recall that there are finitely many saddle connections on $(X,\omg;\Sigma)$ with length at most $\mathbf{L}$. This is a contradiction which proves the lemma.

\end{proof}

\begin{proof}[Proof of Theorem \ref{thm:pentagon:preserving}]
By Lemma \ref{lem:pentagon:diagonals}, there exists an admissible map  $$\i': (A'_1A_2'A_3'A_4'A'_5)\to(X',\omg';\Sigma')$$  such that $F\circ\i(\overline{A_iA_j})=\i'(\overline{A_i'A_j'})$  for all diagonals $\overline{A_iA_j}$.
By Lemma \ref{lem:pentagon:side2}, we may assume that $F\circ(\overline{A_1A_2})=\i'(\overline{A_1'A_2'})$ and $F\circ(\overline{A_2A_3})=\i'(\overline{A_2'A_3'})$. Consider the quadrilateral $(A_1A_2A_3A_4)$, it follows from Lemma \ref{cor:3to4} that $F\circ\i(\overline{A_3A_4})=\i'(\overline{A_3'A_4'})$. Similarly, by considering
$(A_1A_2A_3A_5)$ and $(A_2A_3A_4A_5)$, we see that
$F\circ\i(\overline{A_4A_5})=\i'(\overline{A_4'A_5'})$ and $F\circ\i(\overline{A_5A_1})=\i'(\overline{A_5'A_1'})$.
\end{proof}


\section{Homeomorphism}\label{sec:homeo}
The goal of this section is to prove the following theorem.
\begin{theorem}\label{thm:homeo}
  Every isomorphism $F:\ms(X,\omg;\Sigma)\to\ms(X',\omg';\Sigma')$  is induced by a homeomorphism $f: (X,\omg;\Sigma)\to (X',\omg';\Sigma')$, such that
  \begin{enumerate}[(i)]
    \item $f(\Sigma)=\Sigma'$, and
    \item  for any saddle connection $\gm$ on $(X,\omg;\Sigma)$, $f(\gm)$ is isotopic to $F(\gm)$.
  \end{enumerate}
  Moreover, if $(X,\omg;\Sigma)$ is not a torus with one marked point, then such a homeomorphism is unique up to isotopy. If $(X,\omg;\Sigma)$ is a torus with one marked point, then there are two such homeomorphisms up to isotopy.
\end{theorem}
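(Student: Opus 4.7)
The plan is to realize $F$ triangle-by-triangle using a fixed triangulation of $(X,\omega;\Sigma)$ and then promote the resulting piecewise-affine data to a globally defined homeomorphism whose isotopy class is independent of the chosen triangulation.

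First I would fix a triangulation $\Gamma = \{\gamma_i\}$ of $(X,\omega;\Sigma)$. By Lemma \ref{lem:triangulation:correspondence}, $F(\Gamma)$ is a triangulation of $(X',\omega';\Sigma')$, and by the triangle-preserving property (Theorem \ref{thm:triangle}) each complementary triangle $\Delta$ of $\Gamma$ corresponds to a unique complementary triangle $\Delta'$ of $F(\Gamma)$ with side correspondence dictated by $F$. For each such pair I would attach an affine map $f_\Delta : \Delta \to \Delta'$ (well-defined up to post-composition with a reflection of $\Delta'$) that matches sides and vertices as prescribed by $F$.

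The crux is to choose the orientations of the $f_\Delta$'s coherently so that they glue to a global homeomorphism. I would establish this \emph{orientation consistency} (Proposition \ref{prop:orientation:consist}) as follows: for two adjacent triangles $\Delta_a, \Delta_b \in \Gamma$ sharing an edge $\gamma$, the union $\Delta_a \cup_\gamma \Delta_b$ develops into an admissible quadrilateral (or a degenerate one on small surfaces) in $\mathbb{R}^2$. Applying the quadrilateral/pentagon-preserving results (Lemma \ref{cor:3to4}, Theorem \ref{thm:pentagon:preserving}) — which after admissibly extending via the lemmas of Section \ref{sec:admissible:pentagon} — shows that $F$ sends this quadrilateral to an admissible quadrilateral on $(X',\omega';\Sigma')$ with matching combinatorics. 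This forces $f_{\Delta_a}$ and $f_{\Delta_b}$ to agree on $\gamma$ for a suitable choice of relative orientation. Since the dual graph of the triangulation is connected, propagating these pairwise compatible choices across the triangulation produces a coherent family of affine maps which glue to a homeomorphism $f: (X,\omega;\Sigma) \to (X',\omega';\Sigma')$ with $f(\Sigma) = \Sigma'$ and $f(\gamma_i) = F(\gamma_i)$ for every $\gamma_i \in \Gamma$.

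To extend conclusion (ii) to an arbitrary saddle connection $\gamma \notin \Gamma$, I would embed $\gamma$ into a triangulation $\Gamma_\gamma$ containing it and invoke triangulation-independence of the isotopy class of $f$ (Proposition \ref{prop:consist:triangulation}). I would prove the latter by showing that two triangulations differing by a single edge flip produce isotopic homeomorphisms: this reduces to the observation that on a strictly convex quadrilateral the affine maps defined by either choice of diagonal agree on the boundary and so are isotopic rel boundary by Alexander's trick. Connectedness of the flip graph on triangulations of $(X,\omega;\Sigma)$ then propagates this to any pair of triangulations.

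Finally, for uniqueness any two homeomorphisms $f_1, f_2$ satisfying (i) and (ii) differ by a self-homeomorphism $g$ of $(X',\omega';\Sigma')$ preserving the isotopy class of every saddle connection. Cutting along a triangulation, $g$ restricts on each triangle to a map isotopic to either the identity or a reflection fixing the boundary, and gluing constraints force a single global choice. For $(X,\omega;\Sigma)$ not the torus with one marked point, the system of saddle connections is rigid enough that this choice must be the identity. On the torus with one marked point, however, the hyperelliptic involution $-I$ sends each saddle connection to its orientation-reversal, hence preserves its isotopy class, providing a second homeomorphism and giving exactly two isotopy classes. I expect the main obstacle to be the orientation-consistency step: since a quadratic differential does not canonically orient its saddle connections, one must track sign data carefully across the gluing, and degenerate configurations (simple cylinders, the torus) have to be verified separately.
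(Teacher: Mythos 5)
Your overall architecture (triangle-by-triangle affine maps, orientation propagation, flip-connectedness for triangulation independence, and the $-\mathrm{id}$ ambiguity on the once-marked torus) matches the paper's. But there is a genuine gap in the orientation-consistency step, which is the crux you yourself identify. You assert that for two adjacent triangles $\Delta_a,\Delta_b$ sharing an edge $\gamma$, the union develops into an admissible quadrilateral to which the quadrilateral-preserving results (Lemma \ref{lem:quadri:diagonals}, Lemma \ref{cor:3to4}) apply. Those results require the quadrilateral to be \emph{strictly convex}: the whole mechanism rests on the second diagonal being a saddle connection interior to the quadrilateral that crosses $\gamma$ and nothing else in the triangulation, so that $F$ is forced to reproduce the same crossing pattern on the target. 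For a generic triangulation the union of two adjacent triangles is a non-convex quadrilateral; its second ``diagonal'' lies outside the region and is not a saddle connection of the configuration at all, so there is no combinatorial data in the saddle connection graph that directly couples the orientations of $f_{\Delta_a}$ and $f_{\Delta_b}$. Your parenthetical about ``degenerate configurations'' points at simple cylinders and the torus, not at this non-convexity, which is the actual obstruction.

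The paper closes exactly this gap with the coconvexity machinery: Lemma \ref{lem:coconvex} shows that a quadrilateral that is strictly convex at three vertices but not at the fourth can be replaced by a finite chain of strictly convex admissible quadrilaterals (the termination argument uses an area bound forcing a uniform length bound on the intermediate saddle connections, plus discreteness of the holonomy set), and Lemma \ref{lem:trianglescoconvex} chains any two triangles --- including adjacent non-convex pairs --- through consecutively ``coconvex'' pairs, to each of which the strictly convex case (Lemma \ref{lem:quadrilateral:consistence}) applies. Without this, your propagation along the dual graph of the triangulation does not get started whenever an adjacent pair is non-convex. The rest of your outline (Alexander's trick for flip-invariance, uniqueness via rigidity of the triangle correspondence off the torus, and the extra involution on the torus) is sound and consistent with the paper's Corollary \ref{cor:triangulation:homeo} and Propositions \ref{prop:consist:triangulation} and \ref{prop:orientation:consist}.
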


\subsection{Triangulation graphs}
\begin{definition}
  Two triangulations $\Gm_1$ and $\Gm_2$ of $(X,\omg;\Sigma)$  differ by an \emph{elementary move} if
  \begin{itemize}
    \item there exist $\bt_1\in\Gm_1$ and $\bt_2\in\Gm_2$ such that $\Gm_1\backslash\bt_1=\Gm_2\backslash\bt_2$, and
    \item there exist $\gm_1,\gm_2,\gm_3,\gm_4\in
        \Gm_1\backslash\{\bt_1\}=\Gm_2\backslash\{\bt_2\}$ such that they bound a strictly convex quadrilateral on $(X,\omg;\Sigma)$ which contains $\bt_1,\bt_2$ as diagonals.
  \end{itemize}
\end{definition}
\begin{definition}
  The \emph{triangulation graph} of $(X,\omg;\Sigma)$, denoted by $\mT(X,\omg;\Sigma)$, is a graph whose vertices are triangulations of $(X,\omg;\Sigma)$, and whose edges are pairs of triangulations which differ by an elementary move.
\end{definition}

\begin{proposition}[\cite{BS,ILTC,Ngu3,Tah}]
\label{prop:tri:connect}
  For any ~half-translation~ surface with marked points $(X,\omg;\Sigma)$, the triangulation graph $\mT(X,\omega)$ is connected.
\end{proposition}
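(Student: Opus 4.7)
The plan is to reduce the proposition to showing that every triangulation of $(X,\omg;\Sigma)$ can be transformed by a finite sequence of elementary moves into a canonical Delaunay triangulation. Recall that an edge $e$ in a triangulation $\Gm$, shared by two triangles $T_1, T_2$, is called \emph{locally Delaunay} if the two interior angles of $T_1 \cup T_2$ opposite to $e$ sum to at most $\pi$, equivalently, if neither circumscribed disk of $T_i$ contains the opposite vertex of $T_{3-i}$; and $\Gm$ is called \emph{Delaunay} if every edge is locally Delaunay.

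The first step is the key geometric lemma: if $e$ is not locally Delaunay in $\Gm$, then the geodesic quadrilateral $Q_e := T_1 \cup T_2$ embeds in $(X,\omg;\Sigma)$ as a strictly convex quadrilateral, so its other diagonal $e'$ is a saddle connection disjoint from $\Gm \setminus \{e\}$, and hence $(\Gm \setminus \{e\}) \cup \{e'\}$ is a triangulation obtained from $\Gm$ by a single elementary move in the sense of the paper. A short computation shows that after such a flip, the pair of new opposite angles sums to strictly less than $\pi$, so the flipped edge $e'$ is itself locally Delaunay, and the maximum circumradius of the two triangles strictly decreases.

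Next I would run the flip algorithm: starting from any triangulation $\Gm$, repeatedly flip non-Delaunay edges. Termination follows from exhibiting a monovariant, for instance the lexicographically ordered tuple of circumradii (equivalently, of squared circumradii) of the triangles, which is strictly decreased by every flip and which takes values in a discrete set bounded below, since on $(X,\omg;\Sigma)$ the set of saddle-connection holonomies is discrete in $\R^2/\{\pm 1\}$, so only finitely many saddle connections of bounded length exist. The algorithm halts at a Delaunay triangulation. Finally, I would observe that the Delaunay cell decomposition of $(X,\omg;\Sigma)$ into maximal Euclidean convex polygons with vertices in $\Sigma$ is a canonical object, intrinsic to $(X,\omg;\Sigma)$; and any two triangulations of a fixed convex Euclidean polygon by diagonals are related by diagonal flips (classical associahedron connectivity). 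Applying this cell-by-cell shows that all Delaunay triangulations of $(X,\omg;\Sigma)$ are mutually connected by elementary moves, which together with the flip algorithm proves connectedness of $\mT(X,\omg;\Sigma)$.

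The hard part will be the geometric lemma in step one: on a half-translation surface, one must verify that the two adjacent triangles $T_1, T_2$ of a non-Delaunay edge $e$ really assemble into a strictly convex \emph{embedded} quadrilateral, rather than a self-overlapping immersed one, and that the would-be flipped diagonal $e'$ does not happen to coincide with or cross another edge of $\Gm$. The clean way around this is to work with a generic reference direction so that no four marked points are cocircular in any developing chart, handling ties by a suitable secondary invariant (e.g.\ the lexicographic length sequence), and to use the local affine charts of the type employed in Section \ref{sec:admissible:pentagon} to realise $Q_e$ as the image of an admissible map from a genuine quadrilateral in $\R^2$; the failure of the Delaunay condition then forces the interior angles at the endpoints of $e$ on each side to add up strictly less than $\pi$, giving strict convexity of $Q_e$.
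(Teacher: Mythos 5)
The paper does not actually prove Proposition \ref{prop:tri:connect}; it only cites \cite{BS,ILTC,Ngu3,Tah}, so there is no in-paper argument to compare against. Your Delaunay-flip proof is essentially the standard argument of those references (in particular Bobenko--Springborn and Indermitte et al.), and the outline is sound: when an edge is not locally Delaunay the opposite angles sum to more than $\pi$, forcing the angles of $T_1\cup T_2$ at the endpoints of $e$ to sum to less than $\pi$ on each side, so $Q_e$ is strictly convex, and it is genuinely embedded because the two adjacent triangles are distinct (each admissible triangle has three pairwise distinct sides, cf.\ Lemma \ref{lem:triangle}) and open cells of a triangulation are disjoint; the monovariant also checks out, since each new circumradius is strictly less than the larger of the two old ones (at most one of the two relevant inscribed angles can be obtuse), and discreteness of the holonomy set makes the achievable circumradii a well-ordered set, so the lexicographic descent terminates. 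One small criticism: the aside about choosing a ``generic reference direction'' to avoid cocircular quadruples is not available to you (you cannot perturb the surface without changing $\mT(X,\omg;\Sigma)$) and is also unnecessary, since your cell-by-cell associahedron step already handles cocircularity correctly --- any four vertices of a Delaunay cell lie on a common circle and hence span a strictly convex quadrilateral, so all Delaunay triangulations are connected by elementary moves within the cells. With that aside removed, the proof is correct and matches the cited literature.
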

\remark The above proposition holds for general flat surfaces (simplicial surfaces) (see \cite[Proposition 11,12]{BS},\cite[Theorem 1]{ILTC},\cite[Theorem 1.5]{Tah}). Nguyen (\cite[Theorem 6.2]{Ngu3}) provides an elementary proof for the case of half-translation~ surfaces.

\subsection{Orientation consistency}
By Theorem
\ref{thm:triangle}, we know that if the saddle connections $\gm_1,\gm_2,\gm_3 $  bound a triangle $\Dt$ on $ (X,\omg;\Sigma)$, their images $F(\gm_1),F(\gm_2),F(\gm_3)$  also bound a  triangle on $(X',\omg';\Sigma')$, which is denoted by $\Dt'$. This correspondence induces an affine homeomorphism between $\Dt$ and $\Dt'$, which is called the \emph{$F$-induced affine homeomorphism} and denoted by $f_\Dt$. Our goal is to  ``glue" these $F$-induced affine homeomorphisms  between triangles according to some triangulation of $(X,\omg;\Sigma)$ to obtain a globally well defined homeomorphism  from $(X,\omg;\Sigma)$ to $(X',\omg';\Sigma')$. To do this,  we need to clarify the orientation consistency among affine homeomorphisms between triangles.

\begin{definition}
  Two triangles $\Delta_1$ and $\Delta_2$ on $(X,\omg;\Sigma)$   are called \emph{coconvex} if there exists an admissible map $\i:(A_1A_2A_3A_4)\to(X,\omg;\Sigma)$ such that
  \begin{itemize}
    \item $(A_1A_2A_3A_4)$ is  strictly convex at each vertex;
    \item  both $\Delta_1$ and $\Delta_2$ are contained in the image of $(A_1A_2A_3A_4)$ by $\i$.
  \end{itemize}
\end{definition}

\begin{lemma}\label{lem:quadrilateral:consistence}
  Let $\Delta$ and $\hat{\Delta}$ be two coconvex triangles on $(X,\omg;\Sigma)$. Suppose that the $F$-induced affine homeomorphism for $\Delta$ is orientation preserving. Then the $F$-induced affine homeomorphism  for $\hat{\Delta}$ is also orientation preserving.
\end{lemma}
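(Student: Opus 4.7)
The plan is to realize $f_\Delta$ and $f_{\hat{\Delta}}$ as two of the four $F$-induced affine maps on the sub-triangles of a shared strictly convex quadrilateral, and then to show that any two such sub-triangle maps share a common orientation via a local analysis across their common edge.

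Let $\i\colon(A_1A_2A_3A_4)\to(X,\omg;\Sigma)$ be the admissible map provided by the coconvexity hypothesis; then $(A_1A_2A_3A_4)$ is strictly convex at every vertex, and since the only marked points inside $\i(A_1A_2A_3A_4)$ are the $\i(A_i)$, both $\Delta$ and $\hat{\Delta}$ are of the form $\i(A_iA_jA_k)$ for some three-element subset $\{i,j,k\}\subset\{1,2,3,4\}$. First I will construct an admissible map $\i'\colon(A_1'A_2'A_3'A_4')\to(X',\omg';\Sigma')$ from a strictly convex quadrilateral satisfying $F\circ\i(\overline{A_iA_j})=\i'(\overline{A_i'A_j'})$ for every pair $\{i,j\}\subset\{1,2,3,4\}$. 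When $\i(A_1A_2A_3A_4)$ is a simple cylinder this follows from Proposition~\ref{lem:simplecylinder}; otherwise I extend $\i$ to an admissible pentagon via Lemma~\ref{lem:emdedding:strip}, apply Theorem~\ref{thm:pentagon:preserving} to obtain a target pentagon with every edge $F$-matched, and then read off the required sub-quadrilateral. These identifications force the vertex correspondence $F(\i(A_i))=\i'(A_i')$, since each $\i(A_i)$ is the unique common endpoint of the two consecutive corresponding edges.

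Next I compare orientations. For any two distinct sub-triangles $T=\i(A_iA_jA_k)$ and $T'=\i(A_lA_mA_n)$, the three-element subsets meet in at least two indices $p,q$, so $T$ and $T'$ share the edge $\i(\overline{A_pA_q})$ and the affine maps $f_T$ and $f_{T'}$ coincide on it. Working in the flat coordinates provided by the orientation-preserving local charts $\i$ and $\i'$ (half-translations preserve orientation), the derivatives $L_T,L_{T'}\colon\R^2\to\R^2$ of $f_T$ and $f_{T'}$ agree on the tangent line of the shared edge. Strict convexity of $\i(A_1A_2A_3A_4)$ and $\i'(A_1'A_2'A_3'A_4')$ determines on which side of the shared edge each triangle and its $F$-image lies; a short computation comparing the image of an inward normal at an interior point of the shared edge then shows that $\det L_T$ and $\det L_{T'}$ have the same sign, regardless of whether $T$ and $T'$ happen to lie on the same side or on opposite sides of the shared edge. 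Hence $f_T$ and $f_{T'}$ have the same orientation; specializing to $T=\Delta$ and $T'=\hat{\Delta}$ gives the lemma.

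The main obstacle is the first step: Lemma~\ref{lem:quadri:diagonals} on its own only matches the two diagonals of a quadrilateral, so in order to pin down all four sides one must invoke the full pentagon-preserving machinery of Section~\ref{sec:triangle}. Once the full edge correspondence is in hand, the orientation consistency is essentially a local transversality statement across a single arc.
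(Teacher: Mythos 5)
Your proposal is correct and follows essentially the same route as the paper: realize both triangles as sub-triangles of the shared strictly convex quadrilateral, produce a corresponding strictly convex quadrilateral on $(X',\omg';\Sigma')$ with all six saddle connections $F$-matched (the paper gets this from Lemma \ref{lem:quadri:diagonals} together with Theorem \ref{thm:triangle}, which is equivalent to your pentagon/simple-cylinder argument), and then deduce that all four sub-triangle maps share an orientation. The only cosmetic difference is in the last step, where the paper argues via preservation of the counterclockwise cyclic order of the vertex labels while you argue via the signs of the determinants of two affine maps agreeing along a shared edge; these are the same observation.
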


\begin{proof}
   Let $\i: (A_1A_2A_3A_4)\to (X,\omg;\Sigma)$ be an admissible  map whose image contains both $\Delta$ and $\hat{\Delta}$. In particular, $(A_1A_2A_3A_4)$  is strictly convex at each vertex.  Suppose that $\i(A_1A_2A_4)=\Delta$.
  By Lemma \ref{lem:quadri:diagonals} and Theorem \ref{thm:triangle} there exists an admissible map $\i': (A_1'A_2'A_3'A_4')\to (X',\omg';\Sigma')$, where $ (A_1'A_2'A_3'A_4')$ is a strictly convex quadrilateral, such that  $F\circ\i(\overline{A_iA_j})=\i'(\overline{A_i'A_j'})$ for all $i\neq j$. Suppose that the vertices of $(A_1A_2A_3A_4)$ are labeled in the counterclockwise order. Then the vertices of $(A_1A_2A_4)$ is also labeled in the counterclockwise order. Since  the $F$-induced affine homeomorphism on $\i(A_1A_2A_4)$ is orientation preserving, we see that the vertices of $(A_1'A_2'A_4')$ is also  labeled in the counterclockwise order. Therefore, the vertices  of $(A_1'A_2'A_3'A_4')$ are  labeled in the counterclockwise order. (Otherwise,  the vertices of $(A_1'A_2'A_4')$ would be   labeled in the clockwise order.) As a consequence, the $F$-induced affine homeomorphism on the image of any triangle in $(A_1A_2A_3A_4)$ is also orientation preserving.
  In particular,  the $F$-induced affine homeomorphism  for $\hat{\Delta}$ is also orientation preserving.
\end{proof}

\begin{lemma}\label{lem:trianglescoconvex}
  For any two triangles $\Delta_1$ and ${\Delta}_2$ on $(X,\omg;\Sigma)$, there exists a sequence of triangles $\Delta_{0}=\Delta,\Delta_{1},\cdots,\Delta_{m+1}={\Delta}_2$ such that $\Delta_k$ and $\Delta_{k+1}$ are coconvex for each $0\leq k\leq m$.
\end{lemma}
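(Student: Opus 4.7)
The plan is to combine the connectedness of the triangulation graph $\mT(X,\omg;\Sigma)$ from Proposition \ref{prop:tri:connect} with the refinement supplied by Lemma \ref{lem:coconvex}. Every triangle $\Delta$ on $(X,\omg;\Sigma)$ extends to a triangulation, obtained by completing its three bounding saddle connections to a maximal pairwise disjoint family. I would therefore fix triangulations $\Gamma_1\ni\Delta_1$ and $\Gamma_2\ni\Delta_2$ and use Proposition \ref{prop:tri:connect} to connect them by a finite sequence $\Gamma_1=T_0,T_1,\ldots,T_n=\Gamma_2$ in which consecutive triangulations differ by an elementary move. By the very definition of an elementary move, each such move is associated with a strictly convex admissible quadrilateral $Q_i$ on $(X,\omg;\Sigma)$; the two triangles of $T_i$ cut out by one diagonal of $Q_i$ and the two triangles of $T_{i+1}$ cut out by the other diagonal all sit inside $Q_i$ and are hence pairwise coconvex. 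Thus each elementary move furnishes coconvex bridges between designated triangles of $T_i$ and of $T_{i+1}$.

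Next I would reduce the remaining work to a within-triangulation statement: inside each $T_i$, any two triangles can be joined by a coconvex chain. Since the dual graph of $T_i$ on the connected surface $X\setminus\Sigma$ is connected, it suffices to treat two triangles $T,T'$ of $T_i$ sharing an edge $e$. Lift $T\cup T'$ via an admissible map $\i\colon(A_1A_2A_3A_4)\to(X,\omg;\Sigma)$ with $T=\i(A_1A_2A_3)$ and $T'=\i(A_1A_3A_4)$. If $(A_1A_2A_3A_4)$ is strictly convex at every vertex, then $T$ and $T'$ are coconvex by definition. If the quadrilateral is convex but fails to be strictly convex at some vertex, say $A_1$, then Lemma \ref{lem:coconvex} produces a finite sequence of admissible pentagons $(A_1\hat{A}_{k-1}\hat{A}_kA_3A_4)$, $k=1,\ldots,m$, such that each sub-quadrilateral $(A_1\hat{A}_{k-1}\hat{A}_kA_3)$ is strictly convex and so is $(A_1\hat{A}_mA_3A_4)$. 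The chain
\[
T=\i(A_1\hat{A}_0A_3)\to\i(A_1\hat{A}_1A_3)\to\cdots\to\i(A_1\hat{A}_mA_3)\to\i(A_1A_3A_4)=T'
\]
then consists of pairs that are coconvex inside these strictly convex quadrilaterals.

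The main obstacle is the case in which $T\cup T'$ cannot be realized as a convex quadrilateral in any admissible chart, i.e. the sum of the two triangle angles at an endpoint of $e$ strictly exceeds $\pi$, as can occur near cone points of large total angle. To handle this, I would use the strip-extension lemmas (Lemma \ref{lem:emdedding:strip}, Lemma \ref{lem:twostrips}) to embed $T$ into a strictly convex admissible pentagon $\P$, walk from $T$ through the pairwise coconvex triangles inside $\P$ to reach some auxiliary triangle $\tilde{T}$ that does form a convex admissible quadrilateral with $T'$, and then apply the previous paragraph to chain $\tilde T$ to $T'$. Since $(X,\omg;\Sigma)$ has only finitely many saddle connections bounded by any given length, such a detour terminates after finitely many steps, exactly as in the termination argument used in the proof of Lemma \ref{lem:coconvex}. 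Concatenating these within-triangulation chains with the elementary-move bridges from the first paragraph produces the required coconvex sequence $\Delta_1=\Delta_0,\Delta_1,\ldots,\Delta_{m+1}=\Delta_2$.
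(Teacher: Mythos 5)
Your proposal follows the paper's proof essentially step for step: reduce to triangulations differing by an elementary move via Proposition \ref{prop:tri:connect}, use the strictly convex quadrilateral defining the elementary move as a coconvex bridge, reduce within a fixed triangulation to adjacent triangles via connectedness of the dual graph, and chain adjacent triangles sharing an edge using Lemma \ref{lem:coconvex}. The only divergence is your final ``main obstacle'' case: when the angle sum at an endpoint of the shared edge $e$ exceeds $\pi$, the developed quadrilateral $(A_1A_2A_3A_4)$ is still a simple polygon that is strictly convex at the two apex vertices and at the other endpoint of $e$ (at most one interior angle can reach $\pi$ since the four angles sum to $2\pi$), so it satisfies the hypotheses of Lemma \ref{lem:coconvex} verbatim --- that lemma does not assume the quadrilateral is convex, only strict convexity at $A_2,A_3,A_4$, and indeed Figure \ref{fig:coconvex} depicts $A_1$ as a reflex vertex --- hence your separate strip-extension detour, which is in any case left rather vague, is unnecessary.
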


\begin{proof}
   Notice that each triangle is a connected  component of $(X,\omg;\Sigma)\backslash\Gm$ for some triangulation $\Gm$. To prove the lemma,
   by Proposition \ref{prop:tri:connect}, it suffices to consider the case that
   $$ \Delta_1\subset (X,\omg;\Sigma)\backslash\Gm_1,~
    {\Delta}_2\subset (X,\omg;\Sigma)\backslash{\Gm}_2, $$
    where $\Gm_1$ and ${\Gm}_2$ are two triangulations differing by an elementary move.

    By definition, there exists an admissible map $\i:(A_1A_2A_3A_4)\to(X,\omg;\Sigma)$, where $(A_1A_2A_3A_4)$ is a strictly convex quadrilateral, such that
    $$ \i(\overline{A_1A_3})\in\Gm_1, ~\i(\overline{A_2A_4})\in{\Gm}_2,
    ~\Gm_1\backslash\{ \i(\overline{A_1A_3})\}= {\Gm}_2\backslash\{ \i(\overline{A_2A_4})\}.$$
    Let $\hat{\Delta}_1=\i(A_1A_2A_3)$ and $\hat{\Delta}_2=\i(\overline{A_2A_3A_4})$. Then $\hat{\Delta}_1$ and $\hat{\Delta}_2$ are coconvex.

    Consider the triangles in  $(X,\omg;\Sigma)\backslash\Gm_1$  and $(X,\omg;\Sigma)\backslash{\Gm_2}$. There exist  triangles $\tilde{\Delta}_{1,j}$ ($0\leq j\leq m_1$) and  $\tilde{\Delta}_{2,j}$ ($0\leq j\leq m_2$),  such that
    \begin{itemize}
      \item $\tilde{\Delta}_{1,0}={\Delta}_1$, $\tilde{\Delta}_{1,m_1}=\hat{\Delta}_1$,
      $\tilde{\Delta}_{2,0}=\hat{\Delta}_2$, $\tilde{\Delta}_{2,m_2}={\Delta}_2$;
      \item $\tilde{\Delta}_{i,j},\tilde{\Delta}_{i,j+1}\subset(X,\omg;\Sigma)\backslash\Gm_i $, $\forall i=1,2, \forall 0\leq j\leq m_i-1$;
      \item $\tilde{\Delta}_{i,j}$ and $ \tilde{\Delta}_{i,j+1}$ share a common boundary, $\forall i=1,2, \forall 0\leq j\leq m_i-1$.
    \end{itemize}

 Next, consider the pair $(\tilde{\Delta}_{i,j},\tilde{\Delta}_{i,j+1})$. By Lemma \ref{lem:coconvex}, there exists a sequence of triangles $\tilde{\Delta}_{i,j,0}=\tilde{\Delta}_{i,j}, \tilde{\Delta}_{i,j,1}, \cdots ,
\tilde{\Delta}_{i,j,n_{ij}}=\tilde{\Delta}_{i,j+1}$, such that each pair of adjacent triangles are coconvex.

 Now,  let us replace each pair $(\tilde{\Delta}_{i,j},\tilde{\Delta}_{i,j+1})$ by the sequence $\tilde{\Delta}_{i,j,0}=\tilde{\Delta}_{i,j}$, $\tilde{\Delta}_{i,j,1}$, $\cdots ,
\tilde{\Delta}_{i,j,n_{ij}}=\tilde{\Delta}_{i,j+1}$. Then for the  new sequence which starts at $\Delta_1$ and ends at ${\Delta}_2$, any adjacent pair of triangles are coconvex.

\end{proof}

 Combining  Lemma \ref{lem:quadrilateral:consistence} and Lemma \ref{lem:trianglescoconvex}, we have the following proposition.
\begin{proposition}\label{prop:orientation:consist}
 Let  $F:\ms(X,\omg;\Sigma)\to\ms(X',\omg';\Sigma')$ be an isomorphism.
 Then either
  \begin{enumerate}[(i)]
    \item for every triangle on $(X,\omg;\Sigma)$, the induced affine homeomorphism between triangles is orientation preserving, or
    \item for every triangle on $(X,\omg;\Sigma)$, the induced affine homeomorphism between triangles is orientation reversing.
  \end{enumerate}
\end{proposition}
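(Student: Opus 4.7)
The strategy is straightforward given the two preceding lemmas: use Lemma \ref{lem:trianglescoconvex} to connect any two triangles by a chain of coconvex pairs, then apply Lemma \ref{lem:quadrilateral:consistence} inductively along the chain to propagate the orientation type of the $F$-induced affine map between triangles.

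First I would record the symmetry of Lemma \ref{lem:quadrilateral:consistence}. The coconvexity hypothesis is symmetric in $\Delta$ and $\hat{\Delta}$, so interchanging their roles yields the converse implication: if $f_{\hat{\Delta}}$ is orientation preserving then so is $f_{\Delta}$. By contrapositive, orientation reversing also propagates. Thus for any two coconvex triangles the associated $F$-induced affine homeomorphisms share the same orientation type.

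Now fix an arbitrary reference triangle $\Delta_{0}$ on $(X,\omg;\Sigma)$. Given any other triangle $\Delta$ on $(X,\omg;\Sigma)$, apply Lemma \ref{lem:trianglescoconvex} to obtain a finite sequence of triangles
\[
\Delta_{0},\ \Delta_{(1)},\ \Delta_{(2)},\ \ldots,\ \Delta_{(m)},\ \Delta_{(m+1)} = \Delta
\]
with each consecutive pair $(\Delta_{(k)},\Delta_{(k+1)})$ coconvex. Applying the symmetric form of Lemma \ref{lem:quadrilateral:consistence} successively along this chain shows that $f_{\Delta_{0}}$ and $f_{\Delta}$ have the same orientation type. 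Since $\Delta$ was arbitrary, we conclude that either every $F$-induced affine map between corresponding triangles is orientation preserving, or every such map is orientation reversing, depending only on the type of $f_{\Delta_{0}}$.

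I do not expect a serious obstacle here: Theorem \ref{thm:triangle} has already produced the triangle correspondence (so each $f_{\Delta}$ is well defined), Lemma \ref{lem:quadrilateral:consistence} supplies the local step, and Lemma \ref{lem:trianglescoconvex} supplies the connectivity. The proposition is simply the global consequence packaged from these three ingredients, with only the mild observation about the symmetry of the coconvexity hypothesis needed to turn the one-sided Lemma \ref{lem:quadrilateral:consistence} into the two-sided statement used in the chaining argument.
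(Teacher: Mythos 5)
Your argument is correct and is exactly the paper's: the paper derives the proposition by "combining Lemma \ref{lem:quadrilateral:consistence} and Lemma \ref{lem:trianglescoconvex}", i.e.\ chaining the coconvex-pair consistency along the sequence produced by the connectivity lemma. Your explicit remarks on the symmetry of the coconvexity hypothesis and the contrapositive for the orientation-reversing case just fill in details the paper leaves implicit.
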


\begin{corollary}\label{cor:triangulation:homeo}
  Let  $F:\ms(X,\omg;\Sigma)\to\ms(X',\omg';\Sigma')$ be an isomorphism. Then any triangulation $\Gm$ of $(X,\omg;\Sigma)$ induces a homeomorphism $f_\Gm$ between $(X,\omg;\Sigma)$ and $(X',\omg';\Sigma')$,  such that
  \begin{enumerate}
    \item $f_\Gm(\Sigma)=\Sigma'$, and
    \item  for any saddle connection $\gm\in\Gm$, $f_\Gm(\gm)=F(\gm)$.
  \end{enumerate}
\end{corollary}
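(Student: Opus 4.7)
The plan is to build $f_\Gm$ by gluing the $F$-induced affine homeomorphisms between triangles, one face of $\Gm$ at a time. By Lemma \ref{lem:triangulation:correspondence}, $F(\Gm)$ is a triangulation of $(X',\omg';\Sigma')$, so both surfaces are cut into triangles by $\Gm$ and $F(\Gm)$ respectively. For each triangle $\Dt$ in the complement of $\Gm$, Theorem \ref{thm:triangle} produces a triangle $\Dt'$ in the complement of $F(\Gm)$ whose three bounding saddle connections are the $F$-images of those of $\Dt$; the map $f_\Dt:\Dt\to\Dt'$ is the $F$-induced affine homeomorphism, which by construction sends each boundary edge $\gm$ of $\Dt$ onto $F(\gm)$.

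The crux is edge compatibility: for any two triangles $\Dt_1,\Dt_2$ sharing an edge $\gm\in\Gm$, the restrictions $f_{\Dt_1}|_\gm$ and $f_{\Dt_2}|_\gm$ must agree as affine homeomorphisms $\gm\to F(\gm)$. Each restriction is determined by its action on the two endpoints, so the only way they could disagree is by swapping endpoints. To rule this out, I will invoke Proposition \ref{prop:orientation:consist} to assume without loss of generality that every $f_\Dt$ is orientation preserving. Orienting $\gm$ arbitrarily, the boundary orientations on $\gm$ induced from $\Dt_1$ and $\Dt_2$ are opposite, and likewise the boundary orientations on $F(\gm)$ induced from $\Dt_1'$ and $\Dt_2'$ are opposite, because adjacent triangles in any surface triangulation sit on opposite sides of their common edge. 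Orientation-preservation of the $f_{\Dt_i}$ translates each boundary orientation faithfully, which forces the two endpoint correspondences to coincide.

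Once edge compatibility is in hand, define $f_\Gm$ piecewise by $f_\Gm|_\Dt=f_\Dt$. Continuity across each $\gm\in\Gm$ follows from the step above, and the map is bijective because each $f_\Dt$ is and the two triangulations have the same number of faces. This yields the required homeomorphism $f_\Gm:(X,\omg;\Sigma)\to(X',\omg';\Sigma')$ satisfying $f_\Gm(\Sigma)=\Sigma'$ and $f_\Gm(\gm)=F(\gm)$ for every $\gm\in\Gm$.

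I expect the main technical subtlety to be the verification of continuity at each marked point $p\in\Sigma$ whose link in $\Gm$ contains many edges, especially at a cone point of large total angle. Edge compatibility on each individual edge does not a priori guarantee that the cyclic arrangement of triangles around $p$ is faithfully preserved by the correspondence $\Dt\mapsto\Dt'$. I would handle this by walking around $p$ through the star one coconvex pair of triangles at a time, using Lemma \ref{lem:quadrilateral:consistence} together with Proposition \ref{prop:orientation:consist} to inductively align consecutive $f_{\Dt}$'s; but careful bookkeeping around the cone point will be the most delicate part of the argument.
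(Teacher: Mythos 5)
Your proposal is correct and follows essentially the same route as the paper, which deduces the corollary directly from Theorem \ref{thm:triangle} and Proposition \ref{prop:orientation:consist} by gluing the $F$-induced affine homeomorphisms over the faces of $\Gm$ (using that $F(\Gm)$ is a triangulation by Lemma \ref{lem:triangulation:correspondence}). The only remark worth making is that your closing worry about continuity at the cone points is vacuous: once the face maps agree on every shared edge, the glued map is automatically continuous everywhere, including at the points of $\Sigma$, because $X$ carries the quotient topology from the disjoint union of the closed triangles of $\Gm$, so no further bookkeeping around the cone points is required.
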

\begin{proof}
  It follows from Theorem \ref{thm:triangle} and Proposition \ref{prop:orientation:consist}.
\end{proof}
The homeomorphism $f_\Gm$ obtained in Corollary \ref{cor:triangulation:homeo} is called the {$F$-\textit{induced homeomorphism} with respect to $\Gm$.
In the following, we prove that the isotopy class of $f_\Gm$ relative to $\Sigma$ and $\Sigma'$ is independent of the choices of triangulations.

\begin{proposition}\label{prop:consist:triangulation}
Let  $F:\ms(X,\omg;\Sigma)\to\ms(X',\omg';\Sigma')$ be an isomorphism.
For any two triangulations $\Gm_1$ and $\Gm_2$, the $F$-induced  homeomorphisms $f_{\Gm_1}$ and $f_{\Gm_2}$ are isotopic.
\end{proposition}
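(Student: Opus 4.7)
The plan is to reduce to the case of an elementary move via Proposition~\ref{prop:tri:connect}, and then invoke Alexander's trick on a single quadrilateral.

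First, by Proposition~\ref{prop:tri:connect}, the triangulation graph $\mT(X,\omg;\Sigma)$ is connected, so there exists a finite sequence of triangulations $\Gm_1=\Tta_0,\Tta_1,\dots,\Tta_N=\Gm_2$ such that consecutive triangulations differ by a single elementary move. By induction on $N$, it suffices to prove that $f_{\Tta_k}$ and $f_{\Tta_{k+1}}$ are isotopic whenever $\Tta_k$ and $\Tta_{k+1}$ differ by an elementary move. So from now on assume $\Gm_1$ and $\Gm_2$ differ by one elementary move: there exist $\bt_1\in\Gm_1$ and $\bt_2\in\Gm_2$ with $\Gm_1\setminus\{\bt_1\}=\Gm_2\setminus\{\bt_2\}$, and $\bt_1,\bt_2$ are the two diagonals of a strictly convex quadrilateral $\mathbf{Q}=\i(A_1A_2A_3A_4)$ cut out on $(X,\omg;\Sigma)$ by four saddle connections in the common part of the two triangulations.

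Next, I would analyze what the two induced homeomorphisms $f_{\Gm_1}$ and $f_{\Gm_2}$ do. By Lemma~\ref{lem:quadri:diagonals} together with Theorem~\ref{thm:triangle} and Proposition~\ref{prop:orientation:consist}, there is an admissible map $\i':(A'_1A'_2A'_3A'_4)\to(X',\omg';\Sigma')$ onto a strictly convex quadrilateral $\mathbf{Q}'$ such that $F$ sends each of the six saddle connections $\i(\overline{A_iA_j})$ to $\i'(\overline{A'_iA'_j})$. For every triangle $\Delta$ of $(X,\omg;\Sigma)$ appearing in both complements $(X,\omg;\Sigma)\setminus\Gm_1$ and $(X,\omg;\Sigma)\setminus\Gm_2$ (i.e.\ every triangle outside $\mathbf{Q}$), the two induced homeomorphisms $f_{\Gm_1}$ and $f_{\Gm_2}$ restrict to the same $F$-induced affine homeomorphism $f_\Delta$, and in particular they agree pointwise there. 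On the boundary $\partial\mathbf{Q}$, both $f_{\Gm_1}$ and $f_{\Gm_2}$ agree, because they both send $\i(\overline{A_iA_{i+1}})$ affinely onto $\i'(\overline{A'_iA'_{i+1}})$ with matching endpoints in $\Sigma\to\Sigma'$.

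The last step is to show that the two restrictions $f_{\Gm_1}|_{\mathbf{Q}}$ and $f_{\Gm_2}|_{\mathbf{Q}}$ are isotopic relative to $\partial\mathbf{Q}$. Both are homeomorphisms from the closed disk $\mathbf{Q}$ onto the closed disk $\mathbf{Q}'$, the first piecewise affine along the diagonal $\bt_1$, the second piecewise affine along the diagonal $\bt_2$. They agree on $\partial\mathbf{Q}$. By Alexander's trick, any two homeomorphisms between two closed disks that agree on the boundary are isotopic rel boundary. Gluing this isotopy inside $\mathbf{Q}$ with the constant isotopy outside $\mathbf{Q}$ produces an ambient isotopy from $f_{\Gm_1}$ to $f_{\Gm_2}$ relative to $\Sigma$, which is exactly what we need.

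The main (minor) obstacle is making sure the orientation behavior is consistent on both sides of the diagonals so that Alexander's trick can be applied to disks with the same orientation: this is precisely what Proposition~\ref{prop:orientation:consist} guarantees, which is why the ``either all orientation-preserving or all orientation-reversing'' dichotomy was proved first. No other subtleties arise, and the inductive step along the path in $\mT(X,\omg;\Sigma)$ completes the proof.
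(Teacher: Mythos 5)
Your proposal is correct and follows essentially the same route as the paper: reduce to a single elementary move via the connectedness of the triangulation graph, observe that $f_{\Gm_1}$ and $f_{\Gm_2}$ agree outside the flipped quadrilateral $Q$ and on $\partial Q$, and conclude by an isotopy rel boundary inside $Q$. The only (cosmetic) difference is in the last step, where the paper interpolates through an explicit piecewise-affine map $f_{12}$ defined on the four triangles cut out by both diagonals, while you invoke Alexander's trick directly; both are standard ways of seeing that two disk homeomorphisms agreeing on the boundary are isotopic rel boundary.
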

\begin{proof}
 By Proposition \ref{prop:tri:connect}, it suffices to prove it for the case that $\Gm_1$ and $\Gm_2$ differ by an elementary move. Let $\Gm_1=\{\alpha_1,\gm_2,\gm_3,\gm_4,\gm_5,\cdots,\gm_k\}$ and $\Gm_2=\{\bt_1,\gm_2,\gm_3,\gm_4,\gm_5,\cdots,\gm_k\}$ such that
$\gm_2,\gm_3,\gm_4,\gm_5$ bound a strictly convex quadrilateral $Q$ on $(X,\omg;\Sigma)$ whose diagonals are $\alpha_1,\bt_1$. Correspondingly,  $F(\gm_2),F(\gm_3)$, $F(\gm_4),F(\gm_5)$ bound a strictly convex quadrilateral $Q'$ on $(X',\omg';\Sigma')$ whose diagonals are $F(\alpha_1),F(\bt_1)$. By construction, $f_{\Gm_1}|_{X_1\backslash Q}=f_{\Gm_2}|_{X_1\backslash Q}$. Notice that $\alpha_1$ and $\beta_1$ divide $Q$ into four triangles. Let $ f_Q$ be the piecewisely affine map from $Q$ to $Q'$ whose restriction to each of these four triangles is affine. Let $f_{12}:(X,\omg;\Sigma)\to(X',\omg';\Sigma')$ be a homeomorphism such that $f_{12}|_{X_1\backslash Q}= f_{\Gm_1}|_{X_1\backslash Q}=f_{\Gm_2}|_{X_1\backslash Q}$ and $f_{12}|_Q=f_Q$. Then both $f_{\Gm_1}$  and $f_{\Gm_2}$ are isotopic to $f_{12}$. Therefore, $f_{\Gm_1}$ and $f_{\Gm_2}$ are isotopic.

\end{proof}

\begin{proof}[Proof of Theorem \ref{thm:homeo}.]
  If $(X,\omg;\Sigma)$ is not a torus with one marked point, they every triple of saddle connections bound at most one triangle. Then the  theorem follows from Corollary \ref{cor:triangulation:homeo} and Proposition \ref{prop:consist:triangulation}.

  If $(X,\omg;\Sigma)$ is a tours with one marked point. Then every triangulation $\Gm$ consists of three saddle connections, which bound two triangles on $(X,\omg;\Sigma)$. Therefore, the $F$-induced homeomorphisms with respect to $\Gm$ has two choices, which results in two isotopy classes of homeomorphisms satisfying the condition in the theorem.
\end{proof}

\section{Affine Homeomorphism}\label{sec:affine}

Our goal in this section is to prove the following proposition, which is the last piece for proving  the main theorem.

\begin{proposition}\label{prop:affine}
  Let  $f:(X,\omg;\Sigma)\to(X',\omg';\Sigma')$  be a homeomorphism which induces an isomorphism $f_*:\ms(X,\omg;\Sigma)\to\ms(X',\omg';\Sigma')$. Then $f$ is isotopic to an affine homeomorphism.
\end{proposition}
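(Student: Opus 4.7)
The plan is to invoke Theorem~\ref{thm:DLR}, which reduces the proposition to checking that $f$ preserves cylinder curves, i.e.\ that a simple closed curve $\alpha\subset X\setminus\Sigma$ is a cylinder curve on $(X,\omg;\Sigma)$ if and only if $f(\alpha)$ is a cylinder curve on $(X',\omg';\Sigma')$. Since $f^{-1}$ also induces an isomorphism of saddle connection graphs, it suffices to prove one implication. So fix a cylinder $\C$ on $(X,\omg;\Sigma)$ whose core curve is isotopic to $\alpha$; the task becomes to exhibit a cylinder $\C'$ on $(X',\omg';\Sigma')$ whose core curve is isotopic to $f(\alpha)$.

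When $\C$ is a simple cylinder, Proposition~\ref{lem:simplecylinder} directly supplies an admissible map $\i'\colon(Q_1'Q_2'Q_3'Q_4')\to(X',\omg';\Sigma')$ whose image is a simple cylinder $\C'$, and Theorem~\ref{thm:homeo} shows that $f(\alpha)$ is freely homotopic to a core curve of $\C'$. For a general cylinder $\C$, I would triangulate $\C$ by inserting interior chord saddle connections $\eta_1,\ldots,\eta_N$ running from one boundary component to the other, chosen so that each pair of consecutive chords $\eta_j,\eta_{j+1}$ together with the boundary arcs they span forms an admissible strictly convex quadrilateral, or in boundary-degenerate configurations an admissible pentagon. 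Lemma~\ref{lem:quadri:diagonals}, Lemma~\ref{cor:3to4} and Theorem~\ref{thm:pentagon:preserving} then imply that every side and diagonal of each such local polygon is mapped by $f_*$ to a side or diagonal of an admissible polygon of the same combinatorial type on $(X',\omg';\Sigma')$, and Proposition~\ref{prop:orientation:consist} guarantees that all these local admissible polygons are consistently oriented. Gluing them along their common chord sides yields a single admissible map of an annular Euclidean strip into $(X',\omg';\Sigma')$ whose image is a cylinder $\C'$, and Theorem~\ref{thm:homeo} shows that $f(\alpha)$ is isotopic to a core curve of $\C'$.

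The principal obstacle is the gluing step in the non-simple case: one must verify that the local admissible polygons supplied by Section~\ref{sec:triangle} fit together consistently along the images $f_*(\eta_j)$ of the interior chords, producing a genuinely embedded annular Euclidean region rather than a self-overlapping or multiply-folded one. This is essentially the same style of gluing argument used in Section~\ref{sec:homeo} to construct the homeomorphism $f$ itself from its affine pieces on triangles, and this transportability — combined with Proposition~\ref{lem:simplecylinder} serving as the base case — is what makes the standard argument of \cite{DLR,Ngu1,Ngu2} go through in the present setting.
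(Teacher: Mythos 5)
Your reduction via Theorem~\ref{thm:DLR} and your treatment of the simple-cylinder case via Proposition~\ref{lem:simplecylinder} match the paper. The gap is in the non-simple case, and it is exactly at the step you flag as ``the principal obstacle'': you assert that gluing the images of the local admissible quadrilaterals along the images of the chords produces ``an annular Euclidean strip whose image is a cylinder $\C'$'', but this is precisely the conclusion that has to be proved, not a consequence of the gluing. Gluing the image quadrilaterals along their common chord sides only yields an (immersed or embedded) annular region bounded by saddle connections; for that region to be a metric cylinder, the saddle connections on its two boundary components must all be parallel to one another, and nothing in Lemma~\ref{lem:quadri:diagonals}, Lemma~\ref{cor:3to4}, Theorem~\ref{thm:pentagon:preserving} or Proposition~\ref{prop:orientation:consist} forces this: the image boundary polygonal lines could a priori zigzag, in which case the region is not a cylinder and $f(\alpha)$ need not be a cylinder curve.

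The paper closes this gap differently. It represents the closure of the whole cylinder $\C$ by a single admissible polygon $(Q_1\cdots Q_nQ_{n+1}\cdots Q_m)$ cut open along one interior saddle connection, transports it by Theorem~\ref{thm:triangle} to an admissible polygon $(Q_1'\cdots Q_m')$ on $(X',\omg';\Sigma')$, and then proves the needed parallelism by contradiction: if $\overline{Q_i'Q_{i+1}'}$ and $\overline{Q_j'Q_{j+1}'}$ (one from each boundary component) are not parallel, then in the quadrilateral they span some pair of adjacent interior angles sums to more than $\pi$, so Lemma~\ref{lem:emdedding:strip} admissibly extends $\i'$ on that quadrilateral to a strictly convex pentagon; pulling this back through Theorem~\ref{thm:pentagon:preserving} (applied to the inverse isomorphism) produces a strictly convex pentagon extension of the corresponding quadrilateral inside $\C$, which is impossible because that quadrilateral is trapped in the cylinder whose boundary is parallel to $\i(\overline{Q_1Q_2})$. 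Some argument of this kind is indispensable; without it your proof does not establish that $f(\alpha)$ is a cylinder curve.
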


\begin{proof}

  Recall that a simple closed curve on a half-translation surface with marked points is called a cylinder  curve if it is isotopic (relative to marked points) to the core curve of some cylinder.

  By Theorem \ref{thm:DLR}, it suffices to prove that for each simple closed curve $\alpha$, $\alpha$ is a cylinder curve on $(X,\omg;\Sigma)$ if and only if $f(\alpha)$ is a cylinder curve on $(X',\omg';\Sigma')$. Since $f_*$ is an isomorphism,  it suffices to prove that if
  $\alpha$ is a cylinder curve on $(X,\omg;\Sigma)$, then $f(\alpha)$ is a cylinder curve on $(X',\omg';\Sigma')$.

  Let $\C$ be an  cylinder on $(X,\omg;\Sigma)$ with core curve $\alpha$. Let
  $$\i:(Q_1Q_2\cdots Q_n Q_{n+1}\cdots Q_m)\to (X,\omg;\Sigma)$$
   be an admissible map such that
   \begin{itemize}
     \item $\i(\overline{Q_nQ_{n+1}})=\i(\overline{Q_mQ_1})$ is an interior saddle connection of $\C$;
     \item $\i(\overline{Q_iQ_{i+1}})$ is a boundary saddle connection for $1\leq i\leq n-1$ and $n+1\leq i\leq m-1$.
   \end{itemize}

  If $\C$ is a simple cylinder, it then follows from Proposition \ref{lem:simplecylinder} that  $f(\alpha)$ is a cylinder  curve on $(X',\omg';\Sigma')$.

  If $\C$ is not a simple cylinder, consider a triangulation of $$(Q_1Q_2\cdots Q_n Q_{n+1}\cdots Q_m).$$ By Theorem \ref{thm:triangle}, there exists an admissible map $$\i':(Q'_1Q'_2\cdots Q'_n Q'_{n+1}\cdots Q'_m)\to (X',\omg';\Sigma')$$ such that $f_*(\i(\overline{Q_iQ_j}))=\i'(\overline{Q_i'Q_j'})$ for all sides and diagonals $\overline{Q_iQ_j}$.

  To show that $f(\alpha)$ is a cylinder curve on $(X',\omg';\Sigma')$, it is equivalent to show that   $\overline{Q_i'Q_{i+1}'}$ and $\overline{Q_j'Q_{j+1}'}$ are parallel for any $1\leq i\leq n-1$ and $n+1\leq j\leq m-1$.

  Consider the quadrilateral $(Q'_1Q'_2Q'_{n+1}Q'_{n+2}).$ Let $\theta_1',\theta_2'$, $\theta_{n+1}',\theta_{n+2}'$ be the interior angles at $Q_1',Q_2',Q_{n+1}'$ and $Q_{n+2}'$ respectively. If $\overline{Q_1'Q_2'}$ and $\overline{Q_{n+1}'Q_{n+2}'}$ are not parallel, then either $\theta_2'+\theta_{n+1}'>\pi$ or $\theta_{n+2}'+\theta_1'>\pi$. Without loss of generality, we may assume that $\theta_2'+\theta_{n+1}'>\pi$. It then follows from Lemma \ref{lem:emdedding:strip} that we can admissibly extend $\i'|_{(Q_1'Q_2'Q_{n+1}'Q_{n+2}')}$ to a strictly convex pentagon $(Q_1'Q_2'Q_{n+1}'Q_{n+2}'B')$. Correspondingly, by Theorem \ref{thm:pentagon:preserving}, we can also admissibly extend  $\i|_{(Q_1Q_2Q_{n+1}Q_{n+2})}$ to a strictly convex pentagon $(Q_1Q_2Q_{n+1}Q_{n+2}B)$, which contradicts to the assumption that
  $\i(Q_1Q_2Q_{n+1}Q_{n+2})$ is contained in the cylinder $\C$ whose boundary is parallel to $\i(\overline{Q_1Q_2})$. Therefore,  $\overline{Q_1'Q_2'} $ and $\overline{Q_{n+1}'Q_{n+2}'}$ are parallel. Similarly, we can show that $\overline{Q_i'Q_{i+1}'}$ and $\overline{Q_j'Q_{j+1}'}$ are parallel for any $1\leq i\leq n-1$ and $n+1\leq j\leq m-1$.

\end{proof}

\begin{proof}[Proof of Theorem \ref{thm:main1}]
  By Theorem \ref{thm:homeo}, there exists a homeomorphism $f:(X,\omg;\Sigma)\to(X',\omg';\Sigma')$ such that for any saddle connection $\gm$ on $(X,\omg;\Sigma)$, $f(\gm)$ is isotopic to $F(\gm)$. It then follows from Proposition \ref{prop:affine} that $f$ is isopotic to an affine homeomorphism $\tilde{f}:(X,\omg;\Sigma)\to (X',\omg';\Sigma')$.
\end{proof}
\begin{proof}[Proof of Theorem \ref{thm:main2}]
  Let $\mathrm{Aut}(\ms(X,\omg;\Sigma))$ be the automorphism group of $\ms(X,\omg;\Sigma)$. Let $\mathrm{Aff}(X,\omg;\Sigma)$ be the group of  affine self-homeomorphisms  of $(X,\omg;\Sigma)$.
  Since each affine self-homeomorphism of $(X,\omg;\Sigma)$ induces an automorphism of $\ms(X,\omg;\Sigma)$, there exists a natural group homomorphism $$\mathbb{F}:  \mathrm{Aff}(X,\omg;\Sigma)\to\mathrm{Aut}(\ms(X,\omg;\Sigma).$$
  By Theorem \ref{thm:main1}, $\mathbb{F}$ is surjective. Moreover, it follows from  Theorem \ref{thm:homeo} that if $(X,\omg;\Sigma)$ is a torus with one marked point, then $\mathbb{F}$ is two-to-one. Otherwise, $\mathbb{F}$ is injective.

\end{proof}

\section{ Quotient graph}\label{sec:quotient}

\begin{figure}
  \centering
 \begin{tikzpicture}
   \path[fill,black]
   (0,1)coordinate(a1) circle(0.05)
   (1,1)coordinate(a2) circle(0.05)
   (2.5,1)coordinate(a3) circle(0.05)
   (4.5,1)coordinate(a4) circle(0.05)
   (7,1)coordinate(a5) circle(0.05)
   (0,0)coordinate(b1) circle(0.05)
   (2,0)coordinate(b2) circle(0.05)
   (3,0)coordinate(b3) circle(0.05)
   (5.5,0)coordinate(b4) circle(0.05)
   (7,0)coordinate(b5) circle(0.05);
   \draw (a1)--(a2)--(a3)--(a4)--(a5)--(b5)
   --(b4)--(b3)--(b2)--(b1)--(a1);
   \path[fill,red](a1)--(b2)--(b3)--(a2)--cycle;
   \draw(0.5,1)node[above]{$\gm^+_1$}
   (1.8,1)node[above]{$\gm^+_2$}
   (3.5,1)node[above]{$\gm^+_3$}
   (5.7,1)node[above]{$\gm^+_4$}
   (1,0)node[below]{$\gm^-_3$}
    (2.5,0)node[below]{$\gm^-_1$}
     (4.2,0)node[below]{$\gm^-_4$}
      (6.2,0)node[below]{$\gm^-_2$};
 \end{tikzpicture}
  \caption{\small{The red region represents a simple cylinder determined by $\gm^{\pm}_1$ and the saddle connection connecting the left endpoints of $\gm^{\pm}_1$.} }\label{fig:simp}
\end{figure}
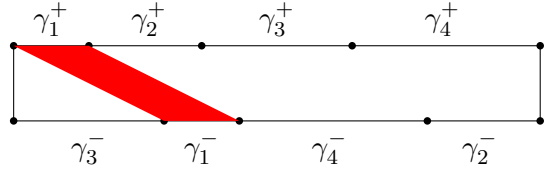

The goal of this section is to prove Theorem \ref{thm:quotient}.  Recall that for two vectors $\vec{a}=(x_1,y_1) $ and $\vec{b}=(x_2,y_2)$ on $\R^2$,  $\vec{a}\wedge \vec{b}:=x_1y_2-x_2y_1$. We start with the following lemma.
\begin{lemma}\label{lem:disjoint}
  If $(X,\omg;\Sigma)$ is a translation surface which is not a torus with one marked point, then there are saddle connections $\alpha, \bt_1,\bt_2,\cdots$, such that $\bt_i$ is disjoint from $\alpha$ for all $i\geq1$, and that $\lim_{i\to\infty}|\int_{\alpha}{\omg} \wedge\int_{\beta_i}{\omg}|=\infty.$
\end{lemma}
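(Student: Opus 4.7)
The plan is to locate a periodic direction on $(X,\omg;\Sigma)$ whose cylinder decomposition contains at least two cylinders $C_1$ and $C_2$, and then take $\alpha$ to be a saddle connection crossing $C_1$ once and the $\bt_n$ to be long ``winding'' saddle connections crossing $C_2$. The disjointness $\bt_n \cap \alpha = \emptyset$ will then be automatic from $C_1 \cap C_2 = \emptyset$, while the winding number will force the wedge product to tend to infinity.

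\textbf{Step 1: a two-cylinder periodic direction.} Periodic directions are dense on any translation surface, so at least one exists. If $(X,\omg;\Sigma)$ has genus $g \geq 2$, every periodic direction works: a one-cylinder decomposition would present $X$ as a topological annulus with its two boundary circles glued together, forcing $X$ to be a torus and contradicting $g \geq 2$. Otherwise $(X,\omg;\Sigma)$ is a torus with $|\Sigma| \geq 2$; here it suffices to choose a periodic direction $\theta$ for which two of the marked points lie on distinct $\theta$-leaves, which yields at least two cylinders. Such $\theta$ exists because only one direction makes a given pair of marked points collinear on the torus, while periodic directions form a dense subset of $\mathbb{S}^1/\{\pm1\}$.

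\textbf{Step 2: the construction.} After rotating we may assume the chosen direction is horizontal; let $c_j$ and $h_j$ denote the core length and height of $C_j$. Each boundary component of a cylinder is a nonempty union of saddle connections and hence contains marked points, so one can pick marked points $p, q$ on the top and bottom of $C_1$, respectively, and $p', q'$ on the top and bottom of $C_2$. I would take $\alpha$ to be the saddle connection in $\overline{C}_1$ from $p$ to $q$ that crosses $C_1$ once without winding, and, for each $n \geq 1$, $\bt_n$ to be the saddle connection in $\overline{C}_2$ from $p'$ to $q'$ that winds $n$ times around the core of $C_2$ before reaching $q'$. Their holonomies then take the form $\textup{hol}(\alpha) = (\delta_1, -h_1)$ and $\textup{hol}(\bt_n) = (\delta_2 - n c_2, -h_2)$ for fixed offsets $\delta_1, \delta_2 \in \R$.

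\textbf{Step 3: verification.} The interior of $\alpha$ lies in $C_1$ and the interior of $\bt_n$ lies in $C_2$; since $C_1 \cap C_2 = \emptyset$, these interiors are disjoint, and $\bt_n$ is indeed disjoint from $\alpha$ in the sense of the paper. A direct determinant computation gives $\textup{hol}(\alpha) \wedge \textup{hol}(\bt_n) = -n h_1 c_2 + (h_1 \delta_2 - \delta_1 h_2)$, whose absolute value tends to $+\infty$ as $n \to \infty$ because $h_1 c_2 > 0$. The main obstacle is Step 1, in particular ruling out the possibility of a surface all of whose periodic decompositions consist of a single cylinder; once that is secured the rest of the argument is a direct calculation inside the two disjoint cylinders.
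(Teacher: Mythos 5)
Your Steps 2 and 3 are fine: segments crossing a cylinder from a marked point on one boundary component to a marked point on the other, winding $n$ times, are genuinely embedded saddle connections with interior in the open cylinder, the disjointness and the holonomy computation are correct, and the wedge product does grow linearly in $n$. The problem is Step 1, and you have correctly identified it as the crux but your argument for it fails. A one-cylinder decomposition does \emph{not} present $X$ as an annulus with its two boundary circles glued homeomorphically: the top boundary of the cylinder is subdivided into several saddle connections which are glued to the saddle connections of the bottom boundary in some permuted order, and this produces closed surfaces of arbitrary genus. One-cylinder directions exist on translation surfaces in every stratum (e.g.\ one-cylinder origamis in $\mathcal{H}(2)$), so ``every periodic direction works in genus $\geq 2$'' is false, and you have not shown that \emph{some} two-cylinder periodic direction exists. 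The torus case also has a smaller soft spot: the set of directions making two given marked points collinear is countably infinite (one direction for each lattice translate), not a single direction, and since the set of periodic directions is itself only countable, ``dense'' does not by itself produce a periodic direction avoiding the bad set; this can be repaired, but not by the sentence you wrote.

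For comparison, the paper sidesteps the existence of a multi-cylinder direction entirely. It only needs one cylinder $C_0$ whose closure is a \emph{proper} subset of the surface: starting from any cylinder $C$, if $\overline{C}\neq X$ one is done, and if $\overline{C}=X$ (the one-cylinder situation you could not exclude) it builds a \emph{simple} cylinder $C_1$ transverse to $C$ out of a matched pair of saddle connections on the top and bottom of $C$; since each boundary of $C$ has at least two saddle connections (here is where ``not a once-marked torus'' enters), $\overline{C_1}$ is proper. It then takes $\alpha$ to be any non-horizontal saddle connection in the complement of $\overline{C_0}$ and $\beta_i$ the winding saddle connections inside $C_0$ — the same growth mechanism as yours. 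If you want to keep your two-parallel-cylinder formulation, you would need to supply a genuine proof that a two-cylinder periodic direction exists, which is harder than what the lemma actually requires.
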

\begin{proof}
  It follows from \cite[Theorem 2]{Mas} that $(X,\omg;\Sigma)$ has infinitely many cylinders.

  First, we claim that there exists a cylinder $C_0$ such that the closure is a proper subset of $(X,\omg;\Sigma)$. Indeed, let $C\subset(X,\omg;\Sigma)$ be a cylinder. Without loss generality, we may assume that $C$ is horizontal. If $(X,\omg;\Sigma)\neq \overline{C}$, the claim follows. If $(X,\omg;\Sigma)= \overline{C}$,  then for every saddle connection $\dt^+$ in the upper boundary component of $C$, there is a corresponding saddle connection $\dt^-$ in the lower boundary component, such that $\int_{\gm^+}{\omg}=\int_{\gm^-}{\omg}$ (see Figure \ref{fig:simp}). Let $(\gm^+_1,\gm^-_1)$ be such a pair, they determine a simple cylinder $C_1$ as illustrated in Figure \ref{fig:simp}. By assumption, $(X,\omg;\Sigma)$ is not a torus with one marked point, each boundary component of $C$ contains at least two saddle connections. In particular, $\overline{C_1}$ is a proper subset of $\overline{C}=(X,\omg;\Sigma)$.

  Next, let $\alpha$ be a non-horizontal saddle connection on $(X,\omg;\Sigma)\backslash \overline{C_0}$, let $\{\bt_i\}_{i\geq1}$ be a sequence of interior saddle connections of $C_0$. Then  $\bt_i$ is disjoint from $\alpha$ for all $i\geq1$, and that $\lim_{i\to\infty}|\int_{\alpha}{\omg} \wedge\int_{\beta_i}{\omg}|=\infty.$

\end{proof}

\begin{proof}[Proof of Theorem \ref{thm:quotient}]
 (i)  Notice that every edge in the saddle connection graph is represented by a pair of disjoint saddle connections.

 If $(X,\omg;\Sigma)$ is a torus with one marked point, the group $\Aff ^+(X,\omg;\Sigma)$ of orientation-preserving affine homeomorphisms of $(X,\omg;\Sigma)$ is isomorphic to $SL(2,\Z)$, and the set of pairs of
 disjoint saddle connections has one $\Aff^+(X,\omg;\Sigma)$-orbit. In particular, $\mathcal{G}(X,\omg;\Sigma)$ has one vertex and  one edge.

 If $(X,\omg;\Sigma)$ is not a torus with one marked point, let $\{\alpha_1,\beta_1\}$, $\{\alpha_2,\beta_2\}$  be two pairs of  non-parallel, disjoint saddle connections on $(X,\omg;\Sigma)$. They represent two edges $e_1,e_2$ of $\ms(X,\omg;\Sigma)$. Suppose that  there is an automorphism $F$ of $\ms(X,\omg;\Sigma)$ such that
 $F(e_1)=e_2$. By Theorem \ref{thm:main1}, there is an affine homeomorphism $f$ of $(X,\omg;\Sigma)$ such that $f(\{\alpha_1,\beta_1\})=\{\alpha_2,\beta_2\}$. Therefore, $|\int_{\alpha_1}\omg\wedge\int_{\beta_1}\omg|=
 |\int_{\alpha_2}\omg\wedge\int_{\beta_2}\omg|\neq 0$. On the other hand, by Lemma \ref{lem:disjoint}, the set \[\{|\int_{\alpha}\omg\wedge\int_{\beta}\omg|: \alpha,\bt \text{ are disjoint saddle connections}\}\]
  is an infinite set.   As a consequence,  $\mathcal{G}(X,\omg;\Sigma)$ has infinitely many edges.

 \vskip 5pt
 (ii)  If $(X,\omg;\Sigma)$ is Veech surface, it follows from \cite[Theorem 6.8]{Vor} (see also \cite[Theorem 1.3]{SW}) that the set of triangles on $(X,\omg;\Sigma)$ has finitely many $\Aff(X,\omg;\Sigma)$-orbits, where $\Aff(X,\omg;\Sigma)$ is the group of affine homeomorphisms of $(X,\omg;\Sigma)$. This implies that the set of saddle connections has finitely many  $\Aff(X,\omg;\Sigma)$-orbits, since each saddle connection is contained in at least one triangle. In particular, $\mathcal{G}(X,\omg;\Sigma)$ has finitely many vertices.

 \end{proof}

\section{Questions}\label{sec:questions}

In this section, we propose two questions.
The first question concerns  Theorem \ref{thm:quotient}.
\begin{question}\label{q2}
  (i) Characterize those half-translation surfaces whose quotient graph have finitely many edges.
 (ii)  Is it true that the quotient graph has finitely many vertices if and only if the underlying half-translation surface is a Veech surface?
\end{question}

 \remark Let $\mathbb{T}(X,\omg;\Sigma)$ be the spine tree defined by Smillie-Weiss (see \cite[\S4]{SW} for the definition). Suppose that $\mathcal{G}(X,\omg;\Sigma)$ has finite vertices. Then the set of directions of saddle connections  has finite $\Aff(X,\omg;\Sigma)$-orbits, which implies that the set of components of $\mathbb H^2\backslash\mathbb{T}(X,\omg;\Sigma)$ has finite $\Aff(X,\omg;\Sigma)$-orbits. To prove that $(X,\omg;\Sigma)$ is a Veech  surface, it suffices to prove that every  component  of  $\mathbb H^2\backslash\mathbb{T}(X,\omg;\Sigma)$ has finite quotient area by $\Aff(X,\omg;\Sigma)$. This is equivalent to show that every saddle connection has non-trivial stabilizers in $\Aff(X,\omg;\Sigma)$.

Irmak-McCarthy (\cite{IM}) proved that every injective simplicial map from an arc graph to itself is induced by some self-homeomorphism of the underlying surface (see also \cite{Ara,Irm1,Irm2}). We may ask a similar question for the saddle connection graph.
\begin{question}\label{q3}
  Let $(X,\omg;\Sigma)$ be a ~half-translation~ surface with marked points. Is it true that every injective simplicial map $F:\ms(X,
  \omg,\Sigma)\to\ms(X,\omg;\Sigma)$ is induced by some affine homeomorphism $f:(X,\omg;\Sigma)\to (X,\omg;\Sigma)$?
\end{question}

 
\end{document}